\numberwithin{equation}{section}
\newtheorem{theorem}{Theorem}[section]
\newtheorem{lemma}{Lemma}[section]
\newtheorem{corollary}{Corollary}[section]
\newtheorem{proposition}{Proposition}[section]
\begin{document}

\title[Cubic fourth-order NLS]
{The cubic fourth-order Schr\"odinger equation}
\author{Benoit Pausader}
\address{Department of Mathematics,
University of Cergy-Pontoise,
CNRS UMR 8088,
2, avenue Adolphe Chauvin,
95302 CERGY-PONTOISE cedex,
France}
\email{Benoit.Pausader@math.u-cergy.fr}

\begin{abstract}
Fourth-order Schr\"odinger equations have been introduced by 
Karpman and Shagalov to take
into account the role of small fourth-order dispersion terms in
the propagation of intense laser beams in a bulk medium with Kerr
nonlinearity. In this paper we investigate the cubic defocusing fourth order Schr\"odinger equation 
$$i\partial_tu + \Delta^2u + \vert u\vert^2u=0$$
in arbitrary 
space dimension $\mathbb{R}^n$ for arbitrary initial data. We prove that the equation is globally 
well-posed when $n \le 8$ and ill-posed when $n \ge 9$, with the additional important 
information that scattering holds true when $5 \le n \le 8$. 
\end{abstract}

\maketitle

\section{Introduction}\label{Section--Intro}

Fourth-order Schr\"odinger equations have been introduced by
Karpman \cite{Kar} and Karpman and Shagalov \cite{KarSha} to take
into account the role of small fourth-order dispersion terms in
the propagation of intense laser beams in a bulk medium with Kerr
nonlinearity. Such fourth-order Schr\"odinger equations have been 
studied from the mathematical viewpoint in Fibich,
Ilan and Papanicolaou \cite{FibIlaPap} who describe various properties
of the equation in the subcritical regime, with part of their
analysis relying on very interesting numerical developments. Related references are by 
Ben-Artzi, Koch, and Saut \cite{BenKocSau} who gave sharp 
dispersive estimates for the biharmonic Schr\"odinger
operator, Guo
and Wang \cite{GuoWan} who proved global well-posedness and scattering
in $H^s$ for small data, Hao, Hsiao and Wang \cite{HaoHsiWan1,
HaoHsiWan2} who discussed the Cauchy problem in a high-regularity
setting, and Segata \cite{Seg3} who proved scattering in the case the
space dimension is one. We refer also to Pausader \cite{Pau1,Pau1Plus} where 
the energy critical case for radially symmetrical initial data 
is discussed. The 
defocusing case like in \eqref{CubicFOS} below is discussed in Pausader \cite{Pau1} for radially symmetrical initial data. The focusing case, 
following the beautiful results of Kenig and Merle \cite{KenMer,KenMer2},  is settled in Pausader \cite{Pau1Plus} still for radially symmetrical initial data.

\medskip We focus in this paper 
on the study of the initial value problem for the cubic fourth-order defocusing equation 
in arbitrary space dimension $\mathbb{R}^n$, $n \ge 1$, without assuming radial symmetry for the 
intial data. The equation is written as
\begin{equation}\label{CubicFOS}
 i\partial_tu + \Delta^2u + \vert u\vert^2u=0\hskip.1cm ,
\end{equation}
where $u = I\times\mathbb{R}^n \to \mathbb{C}$ is a complex valued function, and 
$u_{\vert t=0} = u_0$ is in $H^2$, the space of $L^2$ functions whose 
first and second derivatives are in $L^2$. The equation is critical when $n = 8$ because of the criticality 
of the Sobolev embedding $H^2 \subset L^4$ in this dimension, and it enjoys rescaling invariance 
leaving the energy and $\dot H^2$-norm unchanged. Let $\mathcal{S}$ be the space of Schwartz functions.
The theorem we prove in this paper provides a complete picture of global well-posedness 
for \eqref{CubicFOS}. It is stated as follows.

\begin{theorem}\label{MainThm} Assume $1 \le n \le 8$. Then for any $u_0\in H^2$ there exists a global solution $u\in C(\mathbb{R},H^2)$ of 
\eqref{CubicFOS} with initial data $u(0)=u_0$. Moreover, for any $t\in\mathbb{R}$, the mapping $u(0)\mapsto u(t)$ is 
analytic from $H^2$ into itself. On the contrary, if $n \ge 9$ then the Cauchy problem for \eqref{CubicFOS} is ill-posed in $H^2$ in the sense 
that for any $\varepsilon > 0$, 
there exist $u_0 \in \mathcal{S}$, $t_\varepsilon \in (0,\varepsilon)$, and 
$u \in C\left([0,\varepsilon],H^2\right)$ a solution of \eqref{CubicFOS} with initial data $u_0$ 
such that $\Vert u_0\Vert_{H^2} < \varepsilon$ while $\Vert u(t_\varepsilon)\Vert_{H^2} > \varepsilon^{-1}$. 
Besides, if $5 \le n \le 8$, then scattering holds true in $H^2$ for \eqref{CubicFOS} and the scattering operator 
is analytic.
\end{theorem}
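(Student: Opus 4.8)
The whole analysis rests on the dispersive estimate $\|e^{it\Delta^2}\|_{L^1\to L^\infty}\lesssim|t|^{-n/4}$ of Ben-Artzi--Koch--Saut and the Strichartz estimates $\|e^{it\Delta^2}f\|_{L^q_tL^r_x}\lesssim\|f\|_{\dot H^\gamma}$ it produces over the admissible range of exponents (together with their inhomogeneous and dual forms). First I would set up the local Cauchy theory by a contraction for the Duhamel map $u\mapsto e^{it\Delta^2}u_0-i\int_0^t e^{i(t-s)\Delta^2}\bigl(|u|^2u\bigr)(s)\,ds$, working in $C_tH^2_x\cap L^q_tW^{2,r}_x$ for $n\le 7$ and in the scaling-critical Strichartz space for $n=8$. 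Since $|u|^2u$ is trilinear in $(u,\bar u)$, the Picard iterates are finite sums of multilinear terms in the data, and the iteration converges to a power series for the solution map; this gives existence, uniqueness in the relevant class, and the asserted analyticity of $u_0\mapsto u(t)$ on $H^2$ (and, once the wave operators are built, of the scattering operator).

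Next I would use the two conservation laws, the mass $M(u)=\|u\|_{L^2}^2$ and the energy $E(u)=\tfrac12\|\Delta u\|_{L^2}^2+\tfrac14\|u\|_{L^4}^4$, the latter being coercive precisely because the equation is defocusing. For $1\le n\le 7$ the embedding $H^2\hookrightarrow L^4$ is strictly subcritical, so the local existence time depends only on $\|u_0\|_{H^2}$; since $M$ and $E$ give a time-uniform bound on $\|u(t)\|_{H^2}$, iterating the local theory yields a global solution in $C(\mathbb R,H^2)$. For $n=8$ the energy is exactly scaling-critical, so an a priori $\dot H^2$-bound does not by itself force global existence, but the small-data regime still follows from the critical local theory plus the Strichartz bootstrap, giving global well-posedness and scattering for small $u_0$ as in Guo--Wang.

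The substantive point is large-data global well-posedness and scattering for $n=8$, equivalent to an a priori bound $\|u\|_{L^q_{t,x}}<\infty$ on a critical spacetime norm, which I would establish by the concentration-compactness/rigidity scheme of Kenig--Merle. The ingredients are: a linear profile decomposition for $\dot H^2$-bounded sequences adapted to $e^{it\Delta^2}$ --- where, because the biharmonic symbol $|\xi|^4$ has no Galilean invariance, profiles whose characteristic frequency degenerates to $0$ or to $\infty$ must be analyzed on their own, their rescaled limits being governed by the free flow or by a second-order Schr\"odinger flow; a nonlinear profile decomposition together with a long-time perturbation lemma in the critical norm; extraction, should scattering fail, of a minimal-energy solution whose orbit is precompact in $\dot H^2$ modulo scaling and translations; and a rigidity step that excludes this object via a localized virial / interaction-Morawetz identity for $\Delta^2$. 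I expect this rigidity step to be the main obstacle: the commutator identities that yield a coercive Morawetz quantity for a fourth-order operator are considerably more delicate than in the Laplacian case, and controlling the error terms with only $\dot H^2$ regularity on the solution is the crux.

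Finally, for $5\le n\le 7$ --- defocusing, energy-subcritical, global existence already known --- I would prove scattering through the interaction-Morawetz estimate for $\Delta^2$, which yields an a priori bound on a subcritical spacetime norm of the global solution; a standard bootstrap then upgrades this to a finite critical Strichartz norm, hence to scattering, and analyticity of the scattering operator follows from that of the flow. The hypothesis $n\ge 5$ is precisely what makes the available Strichartz exponents fit together to close this step. For the ill-posedness when $n\ge 9$ (the supercritical regime $s_c=n/2-2>2$) I would run a norm-inflation argument: take data concentrated at a small spatial scale, observe that on a short interval the fourth-order dispersion is negligible so the solution is well approximated by the gauge solution $u_0e^{it|u_0|^2}$ of the ODE $i\partial_t v+|v|^2 v=0$, whose phase oscillation inflates the $\dot H^2$-norm, and make the comparison rigorous by a bootstrap; optimizing the concentration scale produces, for each $\varepsilon>0$, some $u_0\in\mathcal S$ with $\|u_0\|_{H^2}<\varepsilon$ and a solution with $\|u(t_\varepsilon)\|_{H^2}>\varepsilon^{-1}$ at some $t_\varepsilon\in(0,\varepsilon)$, contradicting any uniform continuity of the solution map.
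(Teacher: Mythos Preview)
Your overall architecture matches the paper's: local theory via Strichartz and contraction, subcritical global existence from conservation laws, the Kenig--Merle scheme for $n=8$, Morawetz-based scattering in the subcritical range, and small-dispersion/ODE approximation for ill-posedness when $n\ge 9$. A few points deserve correction or sharpening.

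First, your remark that in the profile decomposition ``profiles whose characteristic frequency degenerates to $0$ or to $\infty$ \dots\ [have] rescaled limits governed by the free flow or by a second-order Schr\"odinger flow'' is misplaced here. That phenomenon arises in \emph{mass-critical} problems, where the scaling of the equation does not match the scaling of the critical norm and an extra frequency parameter survives in the decomposition. In the \emph{energy-critical} case the $\dot H^2$ scaling $u\mapsto h^2u(h^4t,hx)$ is the equation's own scaling symmetry, so the scale-cores $(h_k,t_k,x_k)$ already absorb all frequency modulation and every nonlinear profile solves the same energy-critical equation. No second-order limiting equation appears.

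Second, and more substantively, the rigidity step is where the paper's real work lies, and your description omits its two organizing ideas. The paper does not attack the almost-periodic solution directly with a single virial or Morawetz identity; instead it first refines the compactness to one of \emph{three scenarios} (self-similar, soliton-like, double low-to-high frequency cascade) following Killip--Tao--Visan, normalizing so that the scale function satisfies $h(t)\le 1$ in the latter two. The self-similar case is then killed by almost-conservation of local mass combined with the forced decay of $h$. For the other two, the interaction Morawetz estimate scales like $\dot H^{1/4}$, a $7/4$-derivative gap from the available $\dot H^2$ control; the paper closes this by proving a \emph{frequency-localized} interaction Morawetz estimate on $P_{\ge 1}u$ via an intricate bootstrap, valid only for these special solutions with $h\le 1$. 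This then kills the soliton directly, and the cascade after an additional argument upgrading the solution to $L^2$ (whereupon mass conservation plus $h\to 0$ forces $u=0$). A bare ``localized virial'' identity, if that means space-localized, will not close without these reductions; the localization that matters is in frequency.

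Two minor differences: the paper obtains analyticity of the flow via the implicit function theorem rather than by summing Picard iterates (both work for a polynomial nonlinearity), and for $5\le n\le 7$ it invokes the classical (not interaction) Morawetz estimate together with mass conservation to get the spacetime bound.
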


The fourth-order dispersion scaling property leads to the heuristic that smooth solutions of the free homogeneous equation 
have their $L^\infty$ norm which decays like $t^{-\frac{n}{4}}$. However, the situation is not so transparent 
and all frequency parts of the function have their $L^\infty$-norm that decays much faster, like $t^{-\frac{n}{2}}$, 
but at a rate which depends on the frequency. 
Uniformly, the rate of decay $t^{-\frac{n}{4}}$ is the best possible, but it is not optimal when the solution is localized 
in frequency. As one will see, there are various differences between the dispersion
behaviors of second-order Schr\"odinger equations and of \eqref{CubicFOS}.

\medskip Our paper is organised as follows.
We fix notations in Section \ref{Section--Notations} and recall preliminary results from 
Pausader \cite{Pau1} in Section \ref{Section-PrelRes}. In Section \ref{Section-Ill-posed}, we prove 
that the Cauchy problem is ill-posed when $n \ge 9$. 
In order to do so we use a low-dispersion regime argument which was essentially given in Christ, Colliander and Tao \cite{ChrColTao}. We also refer to Lebeau \cite{Leb2,Leb}, Alazard and Carles \cite{AlaCar}, Carles \cite{Car} 
and Thomann \cite{Tho1,Tho2} for other results in different settings. Starting from Section \ref{Section-3Scenarii} 
we focus on the energy-critical case, and so on the $n=8$ part of our theorem (the equation is subcritical when $n \le 7$). We prove in Section
\ref{Section-3Scenarii}, using important ideas of concentration compactness developed in Kenig and Merle \cite{KenMer} and Killip, Tao and Visan \cite{KilTaoVis}, 
that any failure of global wellposedness implies the existence of some special solutions satisfying three possible scenarii. 
The remaining part of the analysis consists in excluding these hypothetical special solutions working at the level of $\dot{H}^2$-solutions. The first scenario is that there is a self-similar-like solution. 
It is not consistent with conservation of energy, conservation of local mass and compactness up to rescaling. We exclude this scenario in Section
\ref{Section-Self-Similar}. The two other scenarii are that there is a soliton-like solution or that there is a low-to-high cascade-like solution. 
In these two scenarii the solution is away from the $L^2$-like region, namely we have that $h\le 1$ with respect to the notation of 
Theorem \ref{3S-H2CritThm3S}. 
We use this to prove an interaction Morawetz estimate in Sections
\ref{Section-IME} and \ref{Section-FLIME}, following previous analysis from Colliander, Keel, Staffilani, Takaoka and Tao \cite{ColKeeStaTakTao}, Ryckman and Visan \cite{RycVis} and Visan \cite{Vis}.
The estimate we prove is not an a priori estimate. A major difficulty is that the estimate scales like the $\dot{H}^\frac{1}{4}$-norm and thus creates a $7/4$-difference in scaling with the
$\dot{H}^2$-norm control we have.
In Section \ref{Section-Soliton}, we exclude soliton-like solution by proving that it is not consistent with the frequency-localized interaction Morawetz estimates and compactness up to rescaling. The last scenario is excluded in Section \ref{Section-Cascade} by proving that any low-to-high-like solution has an unexpected $L^2$-regularity. Then, conservation of $L^2$-norm, frequency-localized interaction Morawetz estimates and conservation of energy allows us to exclude this existence of low-to-high cascade-like cascade solutions. 
Finally, in Section \ref{Section-Scattering}, we prove the scattering part of Theorem \ref{MainThm}.

\medskip As a remark, with the arguments we develop here and adaptations of the analysis in Visan \cite{Vis},
global well-posedness and scattering in Theorem \ref{MainThm} continue to hold true 
when $n\ge 8$ and the cubic nonlinearity is replaced by the $n$-dimensional energy-critical nonlinearity with total power $(n+4)/(n-4)$.  We also refer to Miao, Xu and Zhao \cite{MiaXuZha} for another proof in high dimensions $n\ge 9$ following previous work by Killip and Visan \cite{KilVis}.
For radially symmetrical data, see Pausader \cite{Pau1}, this is also true in any dimension $n \ge 5$.

%%%%%%%%%%%%%%%%%%%%%%%%%%%%%%%%%%%%%%%%%%%%%%%%%%%%%%%%%%%%%%%%%%%%%%%%%%%%%%%%%%%%%%%%%%%%%%%%%%%%%%%%%%%%%%%%%%%%%%%%%%%%%%%%%%%%%%%%%%%%%%%                 Notations                          %%%%%%%%%%%%%%%%%%%%%%%%%%%%%%%%%%%%%%%%%%%%%%%%%%%%%%%%%%%%%%%%%%%%%%%%%%%%%%%%%%%%%%%%%%%%%%%%%%%%%%%%%%%%%%%%%%%

\section{Notations}\label{Section--Notations}

We fix notations we use throughout the paper. In what follows, we write $A\lesssim B$ to signify that there exists a constant $C$ depending only on $n$ such that $A\le C B$. When the constant $C$ depends on other parameters, we indicate this by a subscript, for exemple, $A\lesssim_u B$ means that the constant may depend on $u$. Similar notations hold for $\gtrsim$.  Similarly we write $A\simeq B$ when $A\lesssim B\lesssim A$.

\medskip

We let $L^q=L^q(\mathbb{R}^n)$ be
the usual Lebesgue spaces, and $L^r(I,L^q)$ be the space of
measurable functions from an interval $I\subset\mathbb{R}$ to
$L^q$ whose $L^r(I,L^q)$ norm is finite, where
\begin{equation*}
\Vert u\Vert_{L^r(I,L^q)}=\left(\int_I\Vert
u(t)\Vert_{L^q}^rdt\right)^\frac{1}{r}.
\end{equation*}
When there is no risk of confusion we may write $L^qL^r$ instead of $L^q(I,L^r)$.
Two important conserved quantities of equation \eqref{CubicFOS}
are the mass and the energy. The mass is defined by
\begin{equation}\label{DefinitionOfMass}
 M(u)=\int_{\mathbb{R}^n}\vert u(x)\vert^2dx
\end{equation}
and the energy is defined by
\begin{equation}\label{DefinitionOfEnergy}
 E(u)=\int_{\mathbb{R}^n}\left(\frac{\vert\Delta u(x)\vert^2}{2}+\frac{\vert u(x)\vert^4}{4}\right)dx.
\end{equation}
In what follows we let $\mathcal{F}f=\hat{f}$ be the Fourier transform of $f$
given by
\begin{equation*}\label{DefFourier}
\hat{f}(\xi)
=\frac{1}{(2\pi)^\frac{n}{2}}\int_{\mathbb{R}^n}f(y)e^{i\left<y,\xi\right>}dy
\end{equation*}
for all $\xi \in \mathbb{R}^n$. The biharmonic Schr\"odinger
semigroup is defined for any tempered distribution $g$ by
\begin{equation}\label{DefFOSSemigroup}
e^{it\Delta^2}g=\mathcal{F}^{-1}e^{it\vert\xi\vert^4}\mathcal{F}g.
\end{equation}
Let $\psi\in C^\infty_c(\mathbb{R}^n)$ be supported in the ball
$B(0,2)$, and such that $\psi=1$ in $B(0,1)$. For any dyadic
number $N=2^k,k\in\mathbb{Z}$, we define the following
Littlewood-Paley operators:
\begin{equation}\label{DefLitPalOp}
\begin{split}
&\widehat{P_{\leq N}f}(\xi)=\psi(\xi/N)\hat{f}(\xi),\\
&\widehat{P_{>N}f}(\xi)=(1-\psi(\xi/N))\hat{f}(\xi),\\
&\widehat{P_Nf}(\xi)=\left(\psi(\xi/N)-\psi(2\xi/N)\right)\hat{f}(\xi).
\end{split}
\end{equation}
Similarly we define $P_{<N}$ and $P_{\ge N}$ by the equations
$$P_{<N} = P_{\leq N}-P_N\hskip.2cm\hbox{and}\hskip.2cm P_{\ge N} = P_{> N} + P_N$$
These operators commute one with another. They also commute with
derivative operators and with the semigroup
$e^{it\Delta^2}$. In addition they are
self-adjoint and bounded on $L^p$ for all $1\le p\le\infty$.
Moreover, they enjoy the following Bernstein property:
\begin{equation}\label{BernSobProp}
\begin{split}
&\hskip.8cm \Vert P_{\ge N}f\Vert_{L^p}\lesssim_s
N^{-s}\Vert\vert\nabla\vert^sP_{\geq N}f\Vert_{L^p}\lesssim_s N^{-s}\Vert\vert\nabla\vert^sf\Vert_{L^p}\\
&\hskip.8cm\Vert\vert\nabla\vert^sP_{\le N}f\Vert_{L^p}\lesssim_s N^s\Vert P_{\le
N} f\Vert_{L^p}
\lesssim_s N^s\Vert f\Vert_{L^p}\\
&\hskip.8cm\Vert \vert\nabla\vert^{\pm s}P_Nf\Vert_{L^p}\lesssim_s
N^{\pm s}\Vert P_Nf\Vert_{L^p}\lesssim_s N^{\pm s}\Vert f\Vert_{L^p}\\
\end{split}
\end{equation}
for all $s\ge 0$, and all $1\le p\le\infty$, independently of $f$, $N$, and $p$, where
$\vert\nabla\vert^s$ is the classical fractional differentiation
operator. We refer to Tao \cite{TaoBook} for more details. Given
$a\geq 1$, we let $a^\prime$ be the conjugate of $a$, so that
$\frac{1}{a}+\frac{1}{a^\prime}=1$.

\medskip

Several norms have to be considered in the analysis of the critical case of
 \eqref{CubicFOS}. For $I \subset \mathbb{R}$ an interval, they are defined as
\begin{equation}\label{N-DefinitionOfNorms}
 \begin{split}
  \Vert u\Vert_{M(I)}&=\Vert\Delta u\Vert_{L^{\frac{2(n+4)}{n-4}}(I,L^{\frac{2n(n+4)}{n^2+16}})}\hskip.1cm,\\
  \Vert u\Vert_{W(I)}&=\Vert\nabla u\Vert_{L^{\frac{2(n+4)}{n-4}}(I,L^{\frac{2n(n+4)}{n^2-2n+8}})}\hskip.1cm,\\
  \Vert u\Vert_{Z(I)}&=\Vert u\Vert_{L^{\frac{2(n+4)}{n-4}}(I,L^{\frac{2(n+4)}{n-4}})}\hskip.1cm,\hskip.1cm\text{and}\\
  \Vert u\Vert_{N(I)}&=\Vert\nabla u\Vert_{L^2(I,L^\frac{2n}{n+2})}.
 \end{split}
\end{equation}
Accordingly, we let $M(\mathbb{R})$ be the completion of
$\mathcal{S}(\mathbb{R}^{n+1})$ with the norm
$\Vert\cdot\Vert_{M(\mathbb{R})}$, and $M(I)$ be the set consisting of
the restrictions to $I$ of functions in $M(\mathbb{R})$. We adopt similar definitions for
$W$, $Z$, and $N$.
We also need the following stronger norms in order to fully exploit the Strichartz estimates in Section \ref{Section-PrelRes}.
Following standard notations, we say that a pair $(q,r)$ is Schr\"odinger-admissible, for short S-admissible, if
$2\le q,r\le\infty,$ $(q,r,n)\ne(2,\infty,2)$, and
\begin{equation}\label{Not-SE-DefinitionOfSadmissible}
 \frac{2}{q}+\frac{n}{r}=\frac{n}{2}.
\end{equation}
We define the full Strichartz norm of regularity $s$ by
\begin{equation}\label{Not-CompleteNorm}
\Vert u\Vert_{\dot{S}^s(I)}=\sup_{(a,b)}\left(\sum_NN^{2s+\frac{4}{a}}\Vert P_Nu\Vert_{L^a(I,L^b)}^2\right)^\frac{1}{2},
\end{equation}
where the supremum is taken over all $S$-admissible pairs $(a,b)$ as in \eqref{Not-SE-DefinitionOfSadmissible}, $s\in\mathbb{R}$ and $I\subset\mathbb{R}$ is an interval.
We also define the dual norm,
\begin{equation}\label{Not-DualNorm}
 \Vert h\Vert_{\dot{\bar{S}}^s(I)}=\inf_{(a,b)}\left(\sum_NN^{2s-\frac{4}{a}}\Vert P_Nh\Vert_{L^{a^\prime}(I,L^{b^\prime})}^2\right)^\frac{1}{2}
\end{equation}
where again, the infimum is taken over all $S$-admissible pairs $(a,b)$ as in \eqref{Not-SE-DefinitionOfSadmissible}, $s\in\mathbb{R}$, and $I$ is an interval. We let $\dot{S}^s(I)$ be the set of tempered distributions of finite $\dot{S}^s(I)$-norm.
Finally, for a product $\pi=\Pi_ia_i$, we use the notation $\mathcal{O}(\pi)$ to denote an expression which is schematically like $\pi$, i.e. that is a finite combination of products $\pi^\prime=\Pi_ib_i$ where in each $\pi^\prime$, each $b_i$ stands for $a_i$ or for $\bar{a}_i$.

\medskip As a remark, if $n = 8$, then there is a rescaling invariance rule for \eqref{CubicFOS} given by
\begin{equation}\label{I-RescalingLaw}
u\mapsto \tau_{(h,t_0,x_0)}u= h^2u(h^4(t-t_0),h(x-x_0))
\end{equation}
which sends a solution of \eqref{CubicFOS} with initial data $u(0)=u_0$ to another solution with data at time
$t=t_0$ given by
\begin{equation}\label{I-DefOfG}
 g_{(h,x_0)}u_0=h^2u_0(h(x-x_0)),
\end{equation}
and which leaves the energy and $\dot{H}^2$-norm unchanged: 
$$E\left(\tau_{(h,t_0,x_0)}u\right) = E\left(u\right)\hskip.2cm\hbox{and}\hskip.2cm \left\Vert g_{(h,x_0)}u_0\right\Vert_{\dot H^2} = 
\left\Vert u_0\right\Vert_{\dot H^2}$$
for all $u_0, u, h, t_0, x_0$. The associated loss of compactness makes 
that \eqref{CubicFOS} is particularly difficult to handle in the critical dimension $n=8$. In the radially symmetrical case the 
difficulty was overcome in Pausader \cite{Pau1}. We prove here that we can get rid of the 
radially symmetrical assumption. 

%%%%%%%%%%%%%%%%%%%%%%%%%%%%%%%%%%%%%%%%%%%%%%%%%%%%%%%%%%%%%%%%%%%%%%%%%%%%%%%%%%%%%%%%%%%%%%%%%%%%%%%%%%%%%%%%%%%%%%%%%%%%%%%%%%%%%%%%%%%%%%                         Preliminary results
%%%%%%%%%%%%%%%%%%%%%%%%%%%%%%%%%%%%%%%%%%%%%%%%%%%%%%%%%%%%%%%%%%%%%%%%%%%%%%%%%%%%%%%%%%%%%%%%%%%%%%%%%%%%%

\section{Preliminary results}\label{Section-PrelRes}

We recall results from Pausader \cite{Pau1}. We refer to Pausader \cite{Pau1} for their proof. 
A first result from Pausader \cite{Pau1} is that the following fundamental Strichartz-type estimates hold true. 
Note that these estimates, because of the gain of derivatives, contradict the Galilean invariance one could have expected for the fourth order 
Schr\"odinger equation.

\begin{proposition}\label{StrichPropEst}
Let $u\in C(I,H^{-4})$ be a solution of
\begin{equation}\label{Not-LinSol}
 i\partial_tu+\Delta^2u+h=0,
\end{equation}
and $u(0)=u_0$.
Then, for any $S$-admissible pairs $(q,r)$ and $(a,b)$ as in \eqref{Not-SE-DefinitionOfSadmissible}, and any
$s\in\mathbb{R}$,
\begin{equation}\label{Not-SE-StrichartzEstimatesWithGainI}
\Vert \vert\nabla\vert^s u\Vert_{L^q(I,L^r)}\lesssim \left(\Vert
\vert\nabla\vert^{s-\frac{2}{q}}
u_0\Vert_{L^2}+\Vert\vert\nabla\vert^{s-\frac{2}{q}-\frac{2}{a}}h\Vert_{L^{a^\prime}(I,L^{b^\prime})}\right)
\end{equation}
whenever the right hand side in
\eqref{Not-SE-StrichartzEstimatesWithGainI} is finite.
\end{proposition}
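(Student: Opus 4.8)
This proposition is established in Pausader \cite{Pau1}; let us indicate the structure of the argument. The only ingredient that is genuinely different from the classical Strichartz theory for $e^{it\Delta}$ is a frequency-localized dispersive estimate reflecting the fact that the symbol $e^{it|\xi|^4}$ of $e^{it\Delta^2}$ has a nondegenerate phase once we stay away from the origin. Writing the kernel of $e^{it\Delta^2}P_N$, after the rescaling $\xi=N\eta$, as $N^n\int_{\mathbb{R}^n}e^{i(tN^4|\eta|^4+N\langle x,\eta\rangle)}\chi(\eta)\,d\eta$ with $\chi$ supported where $|\eta|\simeq 1$, the van der Corput / stationary phase lemma (the Hessian of $|\eta|^4$ being nondegenerate, with determinant $\simeq 1$, on $\mathrm{supp}\,\chi$) yields
\begin{equation*}
\Vert e^{it\Delta^2}P_Nf\Vert_{L^\infty}\lesssim N^n\min\left(1,(|t|N^4)^{-\frac n2}\right)\Vert P_Nf\Vert_{L^1}\lesssim |t|^{-\frac n2}N^{-n}\Vert P_Nf\Vert_{L^1}.
\end{equation*}
Interpolating this with the identity $\Vert e^{it\Delta^2}P_Nf\Vert_{L^2}=\Vert P_Nf\Vert_{L^2}$ gives, for every $2\le r\le\infty$,
\begin{equation*}
\Vert e^{it\Delta^2}P_Nf\Vert_{L^r}\lesssim |t|^{-n\left(\frac12-\frac1r\right)}N^{-2n\left(\frac12-\frac1r\right)}\Vert P_Nf\Vert_{L^{r^\prime}}.
\end{equation*}
The extra weight $N^{-2n(1/2-1/r)}$ is exactly the ``gain of derivatives''.

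Next I would feed this into the abstract Strichartz machinery of Keel and Tao, with $\sigma=n(1/2-1/r)$ in the role of the dispersion exponent and $e^{it\Delta^2}P_N$ as the underlying one-parameter family. Recalling that $S$-admissibility means $2/q+n/r=n/2$, so that $n(1/2-1/r)=2/q$, the homogeneous and inhomogeneous abstract estimates produce, for all $S$-admissible $(q,r)$ and $(a,b)$,
\begin{equation*}
\Vert e^{it\Delta^2}P_Nu_0\Vert_{L^q(I,L^r)}\lesssim N^{-\frac2q}\Vert P_Nu_0\Vert_{L^2},\qquad\Big\Vert\int_0^te^{i(t-s)\Delta^2}P_Nh(s)\,ds\Big\Vert_{L^q(I,L^r)}\lesssim N^{-\frac2q-\frac2a}\Vert P_Nh\Vert_{L^{a^\prime}(I,L^{b^\prime})},
\end{equation*}
the powers of $N$ being precisely what the dispersive bound above generates after the $TT^\ast$ computation and a Hardy--Littlewood--Sobolev integration in time; for the double endpoint $q=a=2$ (which occurs only for $n\ge 3$) one invokes the bilinear interpolation of Keel--Tao directly rather than the Christ--Kiselev lemma.

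To conclude, one uses the Duhamel representation $u=e^{it\Delta^2}u_0+i\int_0^te^{i(t-s)\Delta^2}h(s)\,ds$ associated with \eqref{Not-LinSol} and sums over dyadic $N$. Inserting $\Vert P_N|\nabla|^sf\Vert\simeq N^s\Vert P_Nf\Vert$ from the Bernstein property \eqref{BernSobProp}, the Littlewood--Paley square function estimate and Minkowski's inequality (licit since $q,r\ge 2$ on the left, $a^\prime,b^\prime\le 2$ on the right) give
\begin{equation*}
\Vert|\nabla|^su\Vert_{L^q(I,L^r)}^2\lesssim\sum_NN^{2s}\Vert P_Nu\Vert_{L^q(I,L^r)}^2\lesssim\Vert|\nabla|^{s-\frac2q}u_0\Vert_{L^2}^2+\Vert|\nabla|^{s-\frac2q-\frac2a}h\Vert_{L^{a^\prime}(I,L^{b^\prime})}^2,
\end{equation*}
which is \eqref{Not-SE-StrichartzEstimatesWithGainI}; the passage from a general $C(I,H^{-4})$ solution of \eqref{Not-LinSol} to this representation is standard.

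The main technical point, relative to the second-order case, is the bookkeeping of the $N$-dependent decay rate through the $TT^\ast$/Keel--Tao argument: in the Schr\"odinger setting the dispersive estimate carries no frequency weight, whereas here each use of it contributes a factor $N^{-2\sigma}=N^{-4/q}$, and one must verify that these powers assemble exactly into the shifts $s\mapsto s-2/q$ and $s\mapsto s-2/q-2/a$ appearing in the statement. As always, the endpoint $q=2$ for $n\ge 3$ is the most delicate instance, and is the one place where the soft $TT^\ast$ and Christ--Kiselev reasoning must be replaced by the sharper bilinear interpolation.
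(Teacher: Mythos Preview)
Your proposal is correct. The paper itself does not prove this proposition: it states the result and refers to \cite{Pau1} for the proof (noting also a preliminary version in \cite{KenPonVeg}), so there is no detailed argument in the paper to compare against. The sketch you give --- frequency-localized dispersive estimate via stationary phase for $e^{it\Delta^2}P_N$, interpolation with $L^2$ conservation, the Keel--Tao abstract Strichartz machinery at each dyadic scale producing the factors $N^{-2/q}$ and $N^{-2/q-2/a}$, and then Littlewood--Paley summation using $q,r\ge 2$ and $a',b'\le 2$ --- is exactly the standard route and is the argument carried out in \cite{Pau1}.
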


A consequence of the Strichartz estimates \eqref{Not-SE-StrichartzEstimatesWithGainI} and of the
commutation properties of the linear propagator $e^{it\Delta^2}$ is the following estimate, for any solution $u$ as above:
\begin{equation}\label{Not-SE-StrichartzEstimatesWithGainII}
\begin{split}
\Vert u\Vert_{\dot{S}^s(I)}&\lesssim \Vert u_0\Vert_{\dot{H}^s}+\Vert h\Vert_{\dot{\bar{S}}^s(I)}\\
&\lesssim\Vert u_0\Vert_{\dot{H}^s}+\Vert \vert\nabla\vert^{s-\frac{2}{a}}h\Vert_{L^{a^\prime}(I,L^{b^\prime})},
\end{split}
\end{equation}
where $(a,b)$ is an $S$-admissible pair as in \eqref{Not-SE-DefinitionOfSadmissible}, and the norms are defined in \eqref{Not-CompleteNorm} and \eqref{Not-DualNorm} above.
 A preliminary version of \eqref{Not-SE-StrichartzEstimatesWithGainI} was obtained in Kenig, Ponce and Vega \cite{KenPonVeg}.  Let $u\in C(I,\dot{H}^2)$ be
defined on some interval $I$ such that $0\in I$ and such that $u\in
L^3_{loc}(I\times\mathbb{R}^n)$. We say that $u$ is a solution of \eqref{CubicFOS}
provided that the following equality holds in the sense of
tempered distributions for all times:
\begin{equation}\label{DefinitionOfSolution}
 u(t)=e^{it\Delta^2}u_0+i\int_0^te^{i(t-s)\Delta^2}\left(\vert u\vert^2u\right)(s)ds.
\end{equation}
Note that, by Strichartz estimates, if $u_0\in L^2$ and $\vert u\vert^2u\in
L_{loc}^1(I,L^2)$, then \eqref{DefinitionOfSolution} is equivalent to the fact that $u$
solves \eqref{CubicFOS} in $H^{-4}$
with $u(0)=u_0$.

\medskip

The following Propositions \ref{Not-BCC-ClaimOfLE} and \ref{Not-S-StabProp2}, still from Pausader \cite{Pau1}, are important for the energy-critical case $n=8$.
Proposition \ref{Not-BCC-ClaimOfLE} settles the question of local well-posedness. Proposition \ref{Not-S-StabProp2} settles the question of stability. 

\begin{proposition}\label{Not-BCC-ClaimOfLE} Let $n=8$.
 There exists $\delta>0$ such that for any initial data $u_0\in \dot{H}^2$, and any interval $I=[0,T]$, if
 \begin{equation}\label{Not-BCC-ConditionForLEintheCriticalCase}
  \Vert e^{it\Delta^2}u_0\Vert_{W(I)}<\delta,
 \end{equation}
then there exists a unique solution $u\in
C(I,\dot{H}^2)$ of \eqref{CubicFOS} with
initial data $u_0$. This solution has conserved energy, and satisfies $u\in \dot{S}^2(I)$. Moreover,
 \begin{equation}\label{Not-BCC-ControlOfNorms}
 \begin{split}
 &\Vert u\Vert_{\dot{S}^2(I)}\lesssim
 \Vert u_0\Vert_{\dot{H}^2}+\delta^3,
 \end{split}
 \end{equation}
 and if $u_0\in H^2$, then $u\in \dot{S}^0(I)\cap \dot{S}^2(I)$,
 \begin{equation*}
  \Vert u\Vert_{\dot{S}^0(I)}\lesssim \Vert u_0\Vert_{L^2},
 \end{equation*}
 and $u$ has conserved mass.
 Besides, in this case, the solution depends continuously on the initial data
 in the sense that there exists $\delta_0$, depending on $\delta$, such
 that, for any $\delta_1\in (0,\delta_0)$, if $\Vert v_0-u_0 \Vert_{H^2}\le \delta_1$, and if we let $v$ be
 the local solution of \eqref{CubicFOS} with
 initial data $v_0$, then $v$ is defined on $I$ and
 $\Vert u-v\Vert_{\dot{S}^0(I)} \lesssim \delta_1.$
\end{proposition}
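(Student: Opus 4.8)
The plan is to solve \eqref{CubicFOS} via a fixed-point argument for the Duhamel map $\Phi(u)(t)=e^{it\Delta^2}u_0+i\int_0^te^{i(t-s)\Delta^2}(\vert u\vert^2u)(s)\,ds$ of \eqref{DefinitionOfSolution}. The only genuinely non-routine feature is that the hypothesis \eqref{Not-BCC-ConditionForLEintheCriticalCase} controls $\Vert e^{it\Delta^2}u_0\Vert_{W(I)}$ but \emph{not} $\Vert u_0\Vert_{\dot H^2}$; hence the iteration must be run on a two-scale set — small radius in the critical norm $\Vert\cdot\Vert_{W(I)}$, and finite (possibly large) radius in $\dot S^2(I)$ — with smallness carried only by the $W$-norm. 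I will use throughout that $\Vert u\Vert_{M(I)}+\Vert u\Vert_{W(I)}+\Vert u\Vert_{Z(I)}\lesssim\Vert u\Vert_{\dot S^2(I)}$ (Bernstein and Littlewood--Paley, as in \cite{Pau1}) and the $\dot H^2$-critical Sobolev embedding $\dot W^{1,\frac{24}{7}}(\mathbb R^8)\hookrightarrow L^6(\mathbb R^8)$, which gives $\Vert u\Vert_{Z(I)}\lesssim\Vert u\Vert_{W(I)}$.

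First I would record the relevant estimates. Applying \eqref{Not-SE-StrichartzEstimatesWithGainII} with $s=2$ and the $S$-admissible pair $(a,b)=(2,\tfrac83)$ (so that $\vert\nabla\vert^{s-2/a}=\vert\nabla\vert$ reproduces exactly the derivative in $N(I)$) gives $\Vert u\Vert_{\dot S^2(I)}\lesssim\Vert u_0\Vert_{\dot H^2}+\Vert\nabla(\vert u\vert^2u)\Vert_{N(I)}$, and a parallel use of Proposition~\ref{StrichPropEst} keeping the linear part apart yields $\Vert\Phi(u)\Vert_{W(I)}\lesssim\Vert e^{it\Delta^2}u_0\Vert_{W(I)}+\Vert\nabla(\vert u\vert^2u)\Vert_{N(I)}$. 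For the nonlinear term, $\nabla(\vert u\vert^2u)=\mathcal O(u^2\nabla u)$, so Hölder in space and time ($L^6_tL^6_x\cdot L^6_tL^6_x\cdot L^6_tL^{24/7}_x\hookrightarrow L^2_tL^{8/5}_x$ in $n=8$) gives $\Vert\nabla(\vert u\vert^2u)\Vert_{N(I)}\lesssim\Vert u\Vert_{Z(I)}^2\Vert u\Vert_{W(I)}\lesssim\Vert u\Vert_{W(I)}^3$, and likewise $\Vert\nabla(\vert u\vert^2u-\vert v\vert^2v)\Vert_{N(I)}\lesssim(\Vert u\Vert_{W(I)}^2+\Vert v\Vert_{W(I)}^2)\Vert u-v\Vert_{\dot S^2(I)}$.

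Then the fixed point runs on $X=\{u\in\dot S^2(I):\Vert u\Vert_{W(I)}\le 2\delta\}$ with the $\dot S^2(I)$ metric: for $\delta$ small, $\Vert\Phi(u)\Vert_{W(I)}\le\delta+C(2\delta)^3\le2\delta$ so $\Phi:X\to X$, and $\Vert\Phi(u)-\Phi(v)\Vert_{\dot S^2(I)}\le C\delta^2\Vert u-v\Vert_{\dot S^2(I)}$ makes it a contraction; the fixed point is the desired solution, lies in $C(I,\dot H^2)\cap\dot S^2(I)$ because $\dot S^2(I)$ contains the endpoint $L^\infty_t\dot H^2_x$, and satisfies \eqref{Not-BCC-ControlOfNorms} since $\Vert u\Vert_{\dot S^2(I)}\lesssim\Vert u_0\Vert_{\dot H^2}+\Vert u\Vert_{W(I)}^3\lesssim\Vert u_0\Vert_{\dot H^2}+\delta^3$. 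Uniqueness in this class, and the continuous-dependence bound $\Vert u-v\Vert_{\dot S^0(I)}\lesssim\delta_1$ when $\Vert v_0-u_0\Vert_{H^2}\le\delta_1$, follow from the same difference estimate on a partition of $I$ into finitely many subintervals of small $W$-norm. Conservation of energy \eqref{DefinitionOfEnergy} is obtained by approximating $u_0$ in $\dot H^2$ by Schwartz data, for which persistence of regularity yields classical solutions with the usual conservation law, and passing to the limit via the Lipschitz bound. If $u_0\in H^2$ one runs the analogous iteration at regularity $0$ — estimating $\Vert\vert u\vert^2u\Vert_{\dot{\bar{S}}^0(I)}$ by placing one factor in $\dot S^0(I)$ and the other two in (small) critical Strichartz norms, after subdividing $I$ into finitely many intervals on which $\Vert u\Vert_{\dot S^2(I)}$ is small — obtaining $u\in\dot S^0(I)$ with $\Vert u\Vert_{\dot S^0(I)}\lesssim\Vert u_0\Vert_{L^2}$ and, by the same approximation, conservation of mass \eqref{DefinitionOfMass}.

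The step I expect to require the most care is making the two-scale iteration close, i.e.\ verifying that in every term of $\nabla(\vert u\vert^2u)$ the lone derivative can be placed on a single factor estimated in $\dot S^2(I)$ while the remaining two factors are estimated by the small quantity $\Vert e^{it\Delta^2}u_0\Vert_{W(I)}$ (through $\Vert\cdot\Vert_{W(I)}$ and $\Vert\cdot\Vert_{Z(I)}$), so that the $\dot S^2$-size never re-enters the smallness. This is exactly the point at which the dimension $n=8$ and the precise exponents in \eqref{N-DefinitionOfNorms} enter, via the critical embedding $\dot W^{1,24/7}(\mathbb R^8)\hookrightarrow L^6(\mathbb R^8)$ and the admissibility of $(2,\tfrac83)$; granted this bookkeeping, every estimate above is a routine application of \eqref{Not-SE-StrichartzEstimatesWithGainI}--\eqref{Not-SE-StrichartzEstimatesWithGainII}, Hölder, and Sobolev.
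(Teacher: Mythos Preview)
Your proposal is correct and follows the standard contraction-mapping route that the paper has in mind; note that the paper does not reprove this proposition here but simply cites \cite{Pau1}, where precisely this two-scale fixed-point argument (small in $W(I)$, bounded in $\dot S^2(I)$, nonlinear term placed in $N(I)$ via the H\"older/Sobolev bookkeeping you wrote out) is carried out. One cosmetic remark: when you define $X=\{u\in\dot S^2(I):\Vert u\Vert_{W(I)}\le 2\delta\}$ it is cleaner to also impose the a priori $\dot S^2$-ball $\Vert u\Vert_{\dot S^2(I)}\le 2C(\Vert u_0\Vert_{\dot H^2}+\delta^3)$, so that $X$ is manifestly a closed ball on which $\Phi$ is a self-map; your estimates already show $\Phi$ lands there, so this changes nothing substantive.
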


In addition to Proposition \ref{Not-BCC-ClaimOfLE} we also have Proposition \ref{Not-S-StabProp2}.

\begin{proposition}\label{Not-S-StabProp2} Let $n =8$,
$I \subset \mathbb{R}$ be a compact time interval such that
$0 \in I$, and $\tilde{u}$ be an approximate solution
of \eqref{CubicFOS} in the sense that
\begin{equation}\label{Not-S-AlmostSolution}
i\partial_t\tilde{u}+\Delta^2\tilde{u}+\vert
\tilde{u}\vert^2\tilde{u}=e
\end{equation}
for some $e \in N(I)$. Assume that  $\Vert \tilde{u}\Vert_{Z(I)} <
+\infty$ and $\Vert \tilde{u}\Vert_{L^\infty(I,\dot{H}^2)} <
+\infty$. There exists $\delta_0 > 0$,
$\delta_0 = \delta_0(\Lambda,\Vert \tilde{u}\Vert_{Z(I)},\Vert
\tilde{u}\Vert_{L^\infty(I,\dot{H}^2)})$,
 such that if $\Vert e\Vert_{N(I)}\le \delta$, and $u_0 \in \dot{H}^2$ satisfies
\begin{equation}\label{Not-S-Stab2Hyp2}
\Vert \tilde{u}(0)-u_0\Vert_{\dot{H}^2}\le
\Lambda\hskip.2cm\hbox{and}\hskip.2cm \Vert
e^{it\Delta^2}\left(\tilde{u}(0)-u_0\right)\Vert_{W(I)}\le\delta
\end{equation}
for some $\delta \in (0,\delta_0]$, then there exists $u\in C(I,\dot{H}^2)$ a solution
of \eqref{CubicFOS} such that
$u(0)=u_0$. Moreover, $u$ satisfies
\begin{equation}\label{Not-S-Stab2CCl}
\begin{split}
&\Vert u-\tilde{u}\Vert_{W(I)}\le
C\delta\hskip.1cm ,\\
&\Vert u-\tilde{u}\Vert_{\dot{S}^2}\le
C\left(\Lambda+\delta\right)\hskip.1cm ,\hskip.1cm\hbox{and}\\
&\Vert u\Vert_{\dot{S}^2}\le C,
\end{split}
\end{equation}
where $C = C(\Lambda,\Vert
\tilde{u}\Vert_{Z(I)},\Vert
\tilde{u}\Vert_{L^\infty(I,\dot{H}^2)})$ is a nondecreasing function of its arguments.
\end{proposition}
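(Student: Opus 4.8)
The plan is to prove Proposition \ref{Not-S-StabProp2} by a now-standard perturbative bootstrap argument, of the type pioneered by Colliander--Keel--Staffilani--Takaoka--Tao and adapted to the energy-critical setting by Kenig--Merle and Killip--Tao--Visan. The idea is that the error $w = u - \tilde u$ solves an inhomogeneous fourth-order Schr\"odinger equation driven by the error term $e$, by the initial data discrepancy $\tilde u(0) - u_0$, and by nonlinear interactions of $w$ with $\tilde u$; one controls $w$ by a continuity argument provided $\tilde u$ has been chopped into finitely many subintervals on each of which its Strichartz norm is small. The main obstacle, as usual, is the bookkeeping: one must first upgrade the qualitative hypotheses $\Vert \tilde u\Vert_{Z(I)}, \Vert \tilde u\Vert_{L^\infty(I,\dot H^2)} < \infty$ to a quantitative bound $\Vert \tilde u\Vert_{\dot S^2(I)} \le C(\Vert \tilde u\Vert_{Z(I)}, \Vert \tilde u\Vert_{L^\infty(I,\dot H^2)})$ via Proposition \ref{Not-BCC-ClaimOfLE} applied on each piece of a suitable partition, and then iterate the small-data perturbation estimate across those pieces, allowing the accumulated constant to grow from interval to interval.

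First I would record the key nonlinear estimate: writing the equation for $w = u - \tilde u$ as
\begin{equation*}
i\partial_t w + \Delta^2 w + \mathcal{O}\!\left(w^3\right) + \mathcal{O}\!\left(\tilde u\, w^2\right) + \mathcal{O}\!\left(\tilde u^2\, w\right) = -e,
\end{equation*}
and applying the Strichartz estimate \eqref{Not-SE-StrichartzEstimatesWithGainII} together with the dual embedding $N(I) \hookrightarrow \dot{\bar S}^2(I)$ and the fractional Leibniz rule in the critical dimension $n=8$ (where the exponents in \eqref{N-DefinitionOfNorms} are chosen precisely so that H\"older closes with the Sobolev embedding $\dot H^2 \subset \dot H^{1, \frac{2n}{n-2}} \subset L^{\frac{2n}{n-4}}$), one obtains on any subinterval $J$
\begin{equation*}
\Vert w\Vert_{\dot S^2(J)} \lesssim \Vert w(t_J)\Vert_{\dot H^2} + \Vert e\Vert_{N(J)} + \Vert \tilde u\Vert_{Z(J)}^2 \Vert w\Vert_{\dot S^2(J)} + \Vert \tilde u\Vert_{\dot S^2(J)} \Vert w\Vert_{\dot S^2(J)}^2 + \Vert w\Vert_{\dot S^2(J)}^3.
\end{equation*}
The $Z$-norm here should be interpreted after passing through the $W$-norm and Sobolev embedding; the point is that it is the scale-invariant quantity one can make small by partitioning.

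Next I would fix the partition. Choose $\eta > 0$ small depending only on $n$, and split $I = \bigcup_{j=1}^{J_0} I_j$ into consecutive intervals with $\Vert \tilde u\Vert_{Z(I_j)} \le \eta$ on each; the number $J_0$ of intervals is finite and bounded in terms of $\Vert \tilde u\Vert_{Z(I)}$, hence in terms of the allowed parameters. On the first interval, the hypotheses \eqref{Not-S-Stab2Hyp2} give $\Vert e^{it\Delta^2} w(0)\Vert_{W(I_1)} \le \delta$ and $\Vert w(0)\Vert_{\dot H^2} \le \Lambda$; feeding the linear evolution of $w(0)$ plus the above nonlinear estimate into a continuity argument, and choosing $\delta$ small enough, yields $\Vert w\Vert_{\dot S^2(I_1)} \lesssim \Lambda + \delta$ and in particular $\Vert w(t_1)\Vert_{\dot H^2} \lesssim \Lambda + \delta$. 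One then repeats on $I_2$, with the new ``initial'' discrepancy $w(t_1)$ controlled by the previous step; the key technical check is that the linear term $\Vert e^{it\Delta^2} w(t_1)\Vert_{W(I_2)}$ stays small, which follows because $w(t_1) = (u(t_1) - e^{it_1 \Delta^2} u(0)) + \ldots$ decomposes into a piece handled by Strichartz applied to the Duhamel terms already estimated on $I_1$ plus the original linear piece. Iterating $J_0$ times, the constant grows like a $J_0$-fold composition but remains a function of $\Lambda$, $\Vert \tilde u\Vert_{Z(I)}$, $\Vert \tilde u\Vert_{L^\infty(I,\dot H^2)}$ alone; choosing $\delta_0$ small enough in terms of all these data then delivers \eqref{Not-S-Stab2CCl}. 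The existence of $u$ on all of $I$ comes for free from Proposition \ref{Not-BCC-ClaimOfLE} applied on each $I_j$ once the relevant linear Strichartz norm of $u_0$ translated to $I_j$ is shown to be finite, which is part of the same iteration.

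The delicate points I expect to spend the most care on are: (i) verifying that the divided-difference/Leibniz estimates for the cubic nonlinearity in the borderline exponents of \eqref{N-DefinitionOfNorms} actually close — in $n=8$ several of these are genuine endpoint H\"older--Sobolev inequalities with no room to spare; (ii) propagating the smallness of the free-evolution $W$-norm from one subinterval to the next, since $w$ is not itself a free solution and one must absorb the Duhamel contributions into the inductive hypothesis; and (iii) tracking the dependence of $\delta_0$ on the number of subintervals so that the final constant is genuinely a nondecreasing function of the three stated quantities, as claimed. None of these is conceptually new, but getting the exponent arithmetic right in the critical dimension is where the real work lies.
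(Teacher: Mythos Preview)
Your sketch is the standard stability argument and is correct in outline; however, the paper itself does \emph{not} prove this proposition. It is stated in Section~\ref{Section-PrelRes} as a result recalled from Pausader~\cite{Pau1}, with the explicit sentence ``We refer to Pausader \cite{Pau1} for a proof of the above propositions.'' So there is no proof in the present paper to compare against; your proposed argument is presumably along the lines of what appears in~\cite{Pau1}, and in any case is the by-now canonical approach to such long-time perturbation lemmas.
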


In our analysis, we need to consider $\dot{H}^2$-solutions. These solutions do not satisfy conservation of mass. However the next proposition shows that there is still something remaining from that conservation law for these solutions.
Proposition \ref{Not-AlmostConservationOfMass} shows that the local mass of
a solution of  \eqref{CubicFOS} varies slowly in time provided that
the radius $R$ is sufficiently large.
We define the local
mass $M\left(u,B(x_0,R)\right)$ over the ball $B(x_0,R)$ of a
function $u\in L^2_{loc}$ by
\begin{equation}\label{Not-DefLocalMass}
M\left(u,B(x_0,R)\right)=\int_{\mathbb{R}^n}\vert u(x)\vert^2\psi^4\left((x-x_0)/R\right)dx,
\end{equation}
where, $\psi$ is as in \eqref{DefLitPalOp}. Proposition \ref{Not-AlmostConservationOfMass} from Pausader \cite{Pau1}, states as follows.

\begin{proposition}\label{Not-AlmostConservationOfMass}
Let $n\ge 5$, and $u\in C(I,\dot{H}^2)$ be a solution of
 \eqref{CubicFOS}.
Then we have that
 \begin{equation}\label{Not-AlmostConservationOfLocalMassEstimate}
\left\vert\partial_tM\left(u(t),B(x_0,R)\right)\right\vert\lesssim
\frac{E(u)^\frac{3}{4}}{R}M\left(u(t),B(x_0,R)\right)^\frac{1}{4}
\end{equation}
for all $t \in I$.
\end{proposition}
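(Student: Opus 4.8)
The plan is to differentiate the local mass directly, exploiting the fact that the nonlinearity contributes nothing to its evolution. Writing $\phi(x)=\psi^4((x-x_0)/R)$ and using \eqref{CubicFOS} in the form $\partial_tu=i\Delta^2u+i|u|^2u$, one has $\partial_t|u|^2=2\,\mathrm{Re}(\bar u\,\partial_tu)=-2\,\mathrm{Im}(\bar u\,\Delta^2u)$ since $\mathrm{Re}(i|u|^4)=0$, whence, at least formally,
\[
\partial_tM\bigl(u(t),B(x_0,R)\bigr)=-2\,\mathrm{Im}\int_{\mathbb{R}^n}\bar u\,\Delta^2u\;\phi\,dx .
\]
I would carry out all the manipulations below first for Schwartz solutions, where they are plainly legitimate, and then pass to general $u\in C(I,\dot H^2)$ by the local theory of Proposition \ref{Not-BCC-ClaimOfLE} and a density argument; note that for $n\ge5$ one already has $u\in L^{2n/(n-4)}_{\mathrm{loc}}$ and $\nabla u\in L^2_{\mathrm{loc}}$, so every integral below converges.

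Next I would integrate by parts twice to move two derivatives of $\Delta^2u=\Delta(\Delta u)$ onto $\bar u\phi$, discarding the purely real term $\int\phi|\Delta u|^2$ this produces, obtaining
\[
\partial_tM=-4\,\mathrm{Im}\int(\Delta u)\,\overline{\nabla u}\cdot\nabla\phi\,dx-2\,\mathrm{Im}\int\bar u\,(\Delta u)\,\Delta\phi\,dx .
\]
Since $\psi$ has bounded derivatives, $|\nabla\phi|\lesssim R^{-1}\phi^{1/2}$ and $|\Delta\phi|\lesssim R^{-2}\phi^{1/2}$ with $\phi^{1/2}=\psi^2((x-x_0)/R)\le1$, both supported in $\{R\le|x-x_0|\le2R\}$. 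Cauchy--Schwarz and $\|\phi^{1/2}u\|_{L^2}^2=M$ then give
\[
|\partial_tM|\lesssim\frac1R\|\Delta u\|_{L^2}\Bigl(\int\phi\,|\nabla u|^2\Bigr)^{1/2}+\frac1{R^2}M^{1/2}\|\Delta u\|_{L^2} .
\]

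The heart of the matter is the factor $\int\phi|\nabla u|^2$. Integrating by parts once and taking real parts,
\[
\int\phi\,|\nabla u|^2=\tfrac12\int|u|^2\,\Delta\phi-\mathrm{Re}\int\bar u\,\phi\,\Delta u\le M^{1/2}\|\Delta u\|_{L^2}+\frac{C}{R^2}\int|u|^2\psi^2\bigl((x-x_0)/R\bigr).
\]
For the last term I would write $\int|u|^2\psi^2((x-x_0)/R)\le M^{1/2}\|u\|_{L^2(B(x_0,2R))}$ and invoke the Sobolev embedding $\dot H^2(\mathbb{R}^n)\hookrightarrow L^{2n/(n-4)}(\mathbb{R}^n)$ (this is exactly where $n\ge5$ enters) together with Hölder on the ball, which gives $\|u\|_{L^2(B(x_0,2R))}\lesssim R^2\|u\|_{L^{2n/(n-4)}}\lesssim R^2\|\Delta u\|_{L^2}$. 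As $\|\Delta u\|_{L^2}^2\le2E(u)$, this yields at once $\int\phi|\nabla u|^2\lesssim M^{1/2}E(u)^{1/2}$ and $M\lesssim R^4E(u)$, i.e.\ $M^{1/4}\lesssim R\,E(u)^{1/4}$. Substituting, the first term above is $\lesssim R^{-1}E^{1/2}\cdot M^{1/4}E^{1/4}=R^{-1}E^{3/4}M^{1/4}$, while the second is $R^{-2}M^{1/2}E^{1/2}=R^{-2}M^{1/4}E^{1/2}\cdot M^{1/4}\lesssim R^{-1}E^{3/4}M^{1/4}$; together these give \eqref{Not-AlmostConservationOfLocalMassEstimate}.

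The two delicate points are the following. First, the derivative bookkeeping: $\Delta^2u$ carries four derivatives while $\dot H^2$ controls only two, so one must integrate by parts exactly twice, and one must accept that $\nabla\phi$ and $\Delta\phi$ are bounded only by $\psi^2((x-x_0)/R)$ rather than by the full weight $\psi^4((x-x_0)/R)$ that appears in $M$; this apparent loss is precisely recovered by the Sobolev bound $\|u\|_{L^2(B(x_0,2R))}\lesssim R^2E(u)^{1/2}$, which both closes the estimate for $\int\phi|\nabla u|^2$ and absorbs the surplus power of $R^{-1}$ in the lower-order term. Second, one must justify differentiating $t\mapsto M(u(t),B(x_0,R))$ and the integrations by parts at $\dot H^2$ regularity, which is the routine but necessary approximation step, handled via Proposition \ref{Not-BCC-ClaimOfLE} and continuous dependence on the data.
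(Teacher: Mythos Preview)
Your argument is correct. The paper itself does not give a proof of this proposition---it simply refers to \cite{Pau1}---so there is no in-paper proof to compare against, but the approach you take (differentiating the local mass, two integrations by parts to transfer derivatives from $\Delta^2u$ onto $\bar u\phi$, controlling $\int\phi|\nabla u|^2$ by a further integration by parts, and closing via the Sobolev embedding $\dot H^2\hookrightarrow L^{2n/(n-4)}$ on balls) is the standard and natural one, and is essentially what one finds in \cite{Pau1}. One minor remark: since $\phi=\psi^4$, you actually have the sharper pointwise bound $|\nabla\phi|\lesssim R^{-1}\phi^{3/4}$, though the weaker $R^{-1}\phi^{1/2}$ you used is enough, and the argument goes through exactly as you wrote it.
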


We refer to Pausader \cite{Pau1} for a proof of the above propositions.

%%%%%%%%%%%%%%%%%%%%%%%%%%%%%%%%%%%%%%%%%%%%%%%%%%%%%%%%%%%%%%%%%%%%%%%%%%%%%%%%%%%%%%%%%%%%%%%%%%%%%%%%%%%%%%%%%%%%%%%%%%%%%%%%%%%%%%%%%%               Ill-Posedness results                  %%%%%%%%%%%%%%%%%%%%%%%%%%%%%%%%%%%%%%%%%%%%%%%%%%%%%%%%%%%%%%%%%%%%%%%%%%%%%%%%%%%%%%%%%%%%%%%%%%%%%%%%%%%%%

\section{Ill-posedness results}\label{Section-Ill-posed}

In this section we use a quantitative analysis of the small dispersion regime to prove ill-posedness results for the cubic equation when $n>8$. The idea is that now the equation is supercritical with repect to the regularity-setting in which we work, namely $H^2$. Hence one can always use rescaling arguments to make any ``separation-mechanism'' between two different solutions happen sooner and sooner while making the $H^2$-norm smaller and smaller. It remains then to find two solutions whose distance goes to $\infty$ as time evolves. To achieve this, we follow the proof in Christ, Colliander and Tao \cite{ChrColTao} by considering
the small dispersion regime. See also Lebeau \cite{Leb2,Leb} for previous results, and Alazard and Carles \cite{AlaCar}, Carles \cite{Car} and Thomann \cite{Tho1,Tho2} for instability results in different contexts.

\medskip

Before we prove our theorem, we need the following lemma concerning the small dispersion regime.

\begin{lemma}\label{IP-SmallDispLemma}
Let $k>n/2$. Then, for any $\phi\in\mathcal{S}$, there exists $c>0$ such that for any $\nu\in(0,1)$, there exists a unique solution $w^{\nu}\in C([-T,T],H^k)$ of the problem
\begin{equation}\label{IP-SmallDispEqt}
i\partial_tw+\nu^4\Delta^2w+\vert w\vert^2w=0
\end{equation}
with initial data $w^{\nu}(0)=\phi$, where $T=c\vert\log\nu\vert^c$. Besides, the solution satisfies $w^\nu\in C([-T,T],H^p)$ for any $p$, and
\begin{equation}\label{IP-SmallDispQuantitativeEst}
\Vert w^{\nu}-w^{0}\Vert_{L^\infty([-T,T],H^k)}\lesssim_{\phi,k}\nu^3,
\end{equation}
where
\begin{equation}\label{IP-DefOfW0}
w^{0}(t,x)=\phi(x)\exp\left(i\vert \phi(x)\vert^2t\right)
\end{equation}
is a solution of the ODE formally obtained by setting $\nu=0$ in \eqref{IP-SmallDispEqt}.
\end{lemma}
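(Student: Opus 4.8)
The plan is to treat \eqref{IP-SmallDispEqt} as a perturbation of the ODE $i\partial_t w + |w|^2 w = 0$, whose solution with data $\phi$ is exactly $w^0$ in \eqref{IP-DefOfW0}. First I would fix a Schwartz datum $\phi$ and a large integer $k > n/2$, so that $H^k$ is a Banach algebra and controls $L^\infty$. The key observation is that $w^0(t,\cdot) = \phi \exp(i|\phi|^2 t)$ has $H^k$-norm growing at most polynomially in $t$: differentiating, each derivative falling on the exponential produces a factor of $t\,\nabla(|\phi|^2)$, so $\|w^0(t)\|_{H^k} \lesssim_{\phi,k} \langle t\rangle^k$, with all derivatives bounded and all higher Sobolev norms $\|w^0(t)\|_{H^p} \lesssim_{\phi,p} \langle t\rangle^p$ as well. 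This is what gives the statement "$w^\nu \in C([-T,T],H^p)$ for any $p$" once the error is controlled.

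Next I would set up the difference $r = w^\nu - w^0$ and write its equation. Since $i\partial_t w^0 + |w^0|^2 w^0 = 0$ and $i\partial_t w^\nu + \nu^4 \Delta^2 w^\nu + |w^\nu|^2 w^\nu = 0$, subtracting gives
\begin{equation*}
i\partial_t r + \nu^4 \Delta^2 r = -\nu^4 \Delta^2 w^0 - \left(|w^\nu|^2 w^\nu - |w^0|^2 w^0\right).
\end{equation*}
The plan is to run a Duhamel/bootstrap argument in $C([-T,T],H^k)$: using that $e^{it\nu^4\Delta^2}$ is unitary on $H^k$, that $H^k$ is an algebra, and the Lipschitz bound $\big\||w^\nu|^2w^\nu - |w^0|^2w^0\big\|_{H^k} \lesssim (\|w^\nu\|_{H^k}^2 + \|w^0\|_{H^k}^2)\|r\|_{H^k}$, we get, as long as $\|w^\nu(s)\|_{H^k}$ stays comparable to $\|w^0(s)\|_{H^k}$ on $[0,t]$,
\begin{equation*}
\|r(t)\|_{H^k} \lesssim_\phi \nu^4 \int_0^{|t|} \langle s\rangle^{k+4}\, ds + \int_0^{|t|} \langle s\rangle^{2k}\|r(s)\|_{H^k}\, ds.
\end{equation*}
Grönwall then yields $\|r(t)\|_{H^k} \lesssim_{\phi,k} \nu^4 \langle t\rangle^{k+5} e^{C_\phi \langle t\rangle^{2k+1}}$. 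Choosing $T = c|\log\nu|^c$ with $c$ small enough (depending on $k$ and $\phi$) makes the exponential factor a small negative power of $\nu^{-1}$ times a factor smaller than $\nu^{-1}$, so the whole right-hand side is $\lesssim_{\phi,k}\nu^3$; this is exactly \eqref{IP-SmallDispQuantitativeEst}. The factor $\nu^3$ rather than $\nu^4$ is the margin one deliberately gives up to absorb the polynomial and the mild exponential growth over the time window $T$.

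For existence and uniqueness of $w^\nu$ on $[-T,T]$: local well-posedness of \eqref{IP-SmallDispEqt} in $H^k$ (for $k>n/2$) is standard by the contraction mapping principle, since $H^k$ is an algebra and $e^{it\nu^4\Delta^2}$ is a $C_0$-group on $H^k$; the continuation criterion is control of $\|w^\nu(t)\|_{H^k}$. The a priori bound from the bootstrap above is precisely a continuation estimate that keeps $\|w^\nu(t)\|_{H^k}$ bounded (by, say, $2\sup_{[-T,T]}\|w^0(t)\|_{H^k}$) on the whole interval $[-T,T]$, hence the solution extends to $[-T,T]$. Propagation of regularity to all $H^p$ follows by persistence of regularity for this semilinear equation together with the analogous bounds on $\|w^0(t)\|_{H^p}$. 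The main obstacle is the bookkeeping in the Grönwall step: one must verify that the competition between the $e^{C_\phi\langle t\rangle^{2k+1}}$ growth and the smallness $\nu^4$ really does leave room for a power $\nu^3$ after integrating up to $T = c|\log\nu|^c$, which forces $c$ to depend on $k$ (roughly $c \sim 1/(2k+1)$) — this is the place where the logarithmic-in-$\nu$ lifespan, characteristic of the Christ–Colliander–Tao mechanism, is forced on us and cannot be improved to a power of $\nu^{-1}$.
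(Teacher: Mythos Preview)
Your proposal is correct and follows essentially the same route as the paper: write the equation for the difference $r=w^\nu-w^0$, use the algebra property of $H^k$ for the Lipschitz bound on the cubic nonlinearity, use $\|w^0(t)\|_{H^p}\lesssim_{\phi,p}\langle t\rangle^p$ for the source term $\nu^4\Delta^2 w^0$, and close by Gr\"onwall together with a bootstrap, arriving at $\|r(t)\|_{H^k}\lesssim_{\phi,k}\nu^4\exp(C\langle t\rangle^C)$ and hence $\lesssim\nu^3$ for $|t|\le c|\log\nu|^c$. The only cosmetic difference is that the paper runs an energy estimate (differentiating $\|r(t)\|_{H^k}^2$) rather than the Duhamel formulation you use; since $e^{it\nu^4\Delta^2}$ is unitary on $H^k$, the two are equivalent here and lead to the same differential/integral inequality and the same Gr\"onwall output.
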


\begin{proof}
Letting $u=w^{\nu}-w^{0}$, we see that $u$ solves the Cauchy problem
\begin{equation}\label{IP-SmallDsipCP}
\begin{split}
&i\partial_tu+\nu^4\Delta^2u=\nu^4\Delta^2 w^{0}+\vert w^0\vert^2w^0-\vert w^0+u\vert^2(w^0+u)
\end{split}
\end{equation}
with $u(0)=0$. Let $k>n/2$ be given. Since $w^0\in C^\infty(\mathcal{S})$, standard developments ensure that there exists a unique solution $u\in C([-t,t],H^k)$ to \eqref{IP-SmallDsipCP}, and that $u$ can be continued as long as $\Vert u\Vert_{H^k}$ remains bounded. Besides, $u\in C([-t,t],H^p)$ for any $p\ge 0$ (in the sense that $t$ does not depend on $p$). Consequently, it suffices to prove that there exists $c>0$ such that for any $s<c\vert\log\nu\vert^c$, we have that $\Vert u(s)\Vert_{H^k}\le \nu^3$.
Now, taking derivatives $\partial^\alpha$ of equation \eqref{IP-SmallDsipCP}, multiplying by $\partial^\alpha\bar{u}$, taking the imaginary part and integrating, for all $\alpha$ such that $\vert \alpha\vert\le k$, we get that
\begin{equation}\label{IP-EnergyEstForLE}
\begin{split}
&\partial_s\Vert u(s)\Vert_{H^k}^2\\
&\lesssim \Vert u\Vert_{H^k}\left(\nu^4\Vert \Delta^2w^0(s)\Vert_{H^k}+\Vert \vert w^0+u\vert^2(w^0+u)-\vert w^0\vert^2w^0\Vert_{H^k}\right).
\end{split}
\end{equation}
By \eqref{IP-DefOfW0} we see that, for $p\ge 0$,
\begin{equation}\label{IP-EnergyEstForLEEst1}
\Vert w^0\Vert_{H^{p}}\lesssim_{\phi,p}t^{p}.
\end{equation}
Independently, since $H^k$ is an algebra, we get that
\begin{equation}\label{IP-EnergyEstForLEEst2}
\begin{split}
\Vert \vert w^0+u\vert^2(w^0+u)-\vert w^0\vert^2w^0\Vert_{H^k}&\lesssim \sum_{j=0}^2\Vert \mathcal{O}\left(\left(w^0\right)^ju^{3-j}\right)\Vert_{H^k}\\
&\lesssim \Vert u\Vert_{H^k}\left(1+\Vert w^0\Vert_{H^k}+\Vert u\Vert_{H^k}\right)^2.
\end{split}
\end{equation}
Now, using \eqref{IP-EnergyEstForLE}--\eqref{IP-EnergyEstForLEEst2}, we see that, in the sense of distributions,
\begin{equation}\label{IP-EnergyEstForLECCl}
\partial_s\Vert u(s)\Vert_{H^k}\lesssim_{\phi,k}\nu^4\left(1+\vert s\vert^{k+4}\right)+\Vert u(s)\Vert_{H^k}\left(1+\vert s\vert^{k}+\Vert u(s)\Vert_{H^k}\right)^2.
\end{equation}
An application of Gromwall's lemma gives the bound
\begin{equation}\label{IP-EnergyEstForLELast}
\Vert u(s)\Vert\lesssim_{k,\phi}\nu^4\exp\left(C\left(1+\vert s\vert^C\right)\right)
\end{equation}
for all $s$ such that $\Vert u(s)\Vert_{H^k}\le 1$. By \eqref{IP-EnergyEstForLELast} we see that $\Vert u(s)\Vert_{H^k}\le 1$ holds for all times $\vert s\vert\le c\vert\log \nu\vert^c$, $c>0$ sufficiently small. This gives \eqref{IP-SmallDispQuantitativeEst} and finishes the proof of Lemma \ref{IP-SmallDispLemma}.
\end{proof}

Now, we are in position to prove the main theorem of this section which states that the flow map $u_0\mapsto u(t)$, from $H^2$ into $H^2$ which maps the initial data to the associated solution fails to be continuous at $0$. As a remark, note that \eqref{IP-IllPosednessResult} is false when $n\le 8$ since the $H^2$-norm controls the energy.

\begin{theorem}\label{IP-IPTheorem}
Let $n>8$. Given $\varepsilon>0$, there exists a solution $u\in C([0,\varepsilon],H^2)$ such that
\begin{equation}\label{IP-IllPosednessResult}
\begin{split}
\Vert u(0)\Vert_{H^2}<\varepsilon\hskip.3cm\hbox{and}\hskip.3cm\Vert u(t_\varepsilon)\Vert_{H^2}>\varepsilon^{-1},
\end{split}
\end{equation}
for some $t_\varepsilon\in (0,\varepsilon)$. Besides, we can choose $u$ such that $u(0)\in\mathcal{S}$ and $u\in C([0,\varepsilon],H^k)$ for any $k>0$.
\end{theorem}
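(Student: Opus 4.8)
The plan is to combine Lemma \ref{IP-SmallDispLemma} with the supercritical rescaling \eqref{I-RescalingLaw}. The heuristic is that in dimension $n > 8$ the equation \eqref{CubicFOS} is $H^2$-supercritical, so by concentrating the initial data we can make the $H^2$-norm at time $0$ arbitrarily small while forcing strong growth of $\|\Delta u\|_{L^2}$ in an arbitrarily short time. First I would fix a Schwartz bump $\phi \in \mathcal{S}$ with $\|\phi\|_{L^2} = 1$ (say), apply Lemma \ref{IP-SmallDispLemma} to get, for each small $\nu \in (0,1)$, a solution $w^\nu \in C([-T,T], H^k)$ of \eqref{IP-SmallDispEqt} with $T = c|\log\nu|^c$ satisfying $\|w^\nu - w^0\|_{L^\infty([-T,T],H^k)} \lesssim_{\phi,k} \nu^3$, where $w^0(t,x) = \phi(x) e^{i|\phi(x)|^2 t}$.

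Next I would undo the small-dispersion scaling. If $w^\nu$ solves $i\partial_t w + \nu^4 \Delta^2 w + |w|^2 w = 0$, then $u(t,x) := \lambda w^\nu(t, \nu^{-1} x)$ for an appropriate amplitude $\lambda = \lambda(\nu)$ solves \eqref{CubicFOS}; matching the cubic term forces $\lambda^2 = 1$ after the spatial rescaling absorbs the $\nu^4$, so in fact the correct substitution is $u(t,x) = w^\nu(t, \nu^{-1}x)$, which solves $i\partial_t u + \Delta^2 u + |u|^2 u = 0$ exactly since $\Delta^2_x u = \nu^{-4}(\Delta^2 w^\nu)(t,\nu^{-1}x)$. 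Then I would compute the relevant norms by change of variables: $\|u(t)\|_{L^2}^2 = \nu^n \|w^\nu(t)\|_{L^2}^2$ and, more importantly, $\|\Delta u(t)\|_{L^2}^2 = \nu^{n-4}\|\Delta w^\nu(t)\|_{L^2}^2$, so $\|u(0)\|_{H^2} \simeq \nu^{(n-4)/2}\|\phi\|_{H^2} + \nu^{n/2}\|\phi\|_{L^2} \to 0$ as $\nu \to 0$ (using $n > 4$), while for the growth we need a time $t_\nu$ at which $\|\Delta w^\nu(t_\nu)\|_{L^2}$ is large. The point is that $w^0(t)$ has $\|\Delta w^0(t)\|_{L^2} \simeq_\phi \langle t\rangle^2$ (two derivatives hitting the oscillatory factor $e^{i|\phi|^2 t}$ each produce a factor $t$), and by \eqref{IP-SmallDispQuantitativeEst} with $k$ large this persists for $w^\nu$ up to time $T = c|\log\nu|^c$ with error $O(\nu^3)$. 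Choosing $t_\nu$ close to $T$ gives $\|\Delta w^\nu(t_\nu)\|_{L^2} \gtrsim_\phi |\log\nu|^{2c}$, hence $\|u(t_\nu)\|_{H^2} \gtrsim_\phi \nu^{(n-4)/2}|\log\nu|^{2c}$.

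Here is the tension that must be resolved: $\nu^{(n-4)/2}|\log\nu|^{2c} \to 0$, not $\infty$, so a single rescaling does not suffice — this is the main obstacle. The resolution is the extra supercritical scaling freedom \eqref{I-RescalingLaw}: for $h > 0$ the map $u \mapsto h^{2\sigma} u(h^4 t, hx)$ — choosing the amplitude exponent to keep the equation invariant, which here means $u_h(t,x) = h^2 u(h^4 t, hx)$ as in \eqref{I-RescalingLaw} — multiplies $\|u_h(0)\|_{\dot H^2}$ by $h^{(n-8)/2}$ (and $\|u_h(0)\|_{L^2}$ by $h^{(n-8)/2-?}$; I would recompute, but the key fact guaranteed by the excerpt is that $\dot H^2$ and the energy are scale-invariant only at $n=8$, and for $n > 8$ the $\dot H^2$-norm \emph{shrinks} under $h \to 0$). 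So I would apply \eqref{I-RescalingLaw} with $h$ small to the already-rescaled solution $u$: this shrinks $\|u(0)\|_{H^2}$ further (toward $0$) and, crucially, \emph{compresses the time axis by $h^4$}, so the blow-up-like growth that took time $\sim T$ now takes time $\sim h^4 T$, which can be made $< \varepsilon$. One then picks, in order: $k$ large enough (depending on $n$), then $\nu$ small enough that $\nu^{(n-4)/2}|\log\nu|^{2c}$ times the later $h$-factor is still controllable and $t_\nu < T$, then $h$ small enough that $h^4 T < \varepsilon$ and $\|u(0)\|_{H^2} < \varepsilon$ — and then verify the growth: since the $h$-rescaling multiplies $\|u(t)\|_{\dot H^2}$ by a \emph{fixed} power $h^{(n-8)/2} < 1$ but the growth $|\log\nu|^{2c}$ can be taken arbitrarily large by shrinking $\nu$ first (independently of $h$), one can arrange $\|u(t_\varepsilon)\|_{H^2} > \varepsilon^{-1}$. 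The book-keeping of which parameter is chosen relative to which is the only delicate point; the regularity claims $u(0) \in \mathcal{S}$ and $u \in C([0,\varepsilon], H^k)$ for all $k$ come for free from the corresponding statements in Lemma \ref{IP-SmallDispLemma} since rescaling preserves Schwartz class and all $H^k$ spaces.
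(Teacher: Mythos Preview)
Your strategy is exactly the paper's: combine the small-dispersion approximation of Lemma~\ref{IP-SmallDispLemma} with the supercritical scaling of \eqref{CubicFOS}. However, the execution contains a direction error that propagates through the rest of the argument. The substitution $u(t,x)=w^\nu(t,\nu^{-1}x)$ does \emph{not} solve \eqref{CubicFOS}: since $\Delta_x^2 u=\nu^{-4}(\Delta^2 w^\nu)(t,\nu^{-1}x)$, you land on $i\partial_t w^\nu+\nu^{-4}\Delta^2 w^\nu+|w^\nu|^2w^\nu$, which is not the equation $w^\nu$ satisfies. The correct first substitution is $u(t,x)=w^\nu(t,\nu x)$, and then $\|u(0)\|_{\dot H^2}\simeq\nu^{(4-n)/2}\to\infty$, not $\to 0$; the ``tension'' you describe is an artifact of this error. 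Likewise, the rescaling $u_h(t,x)=h^2u(h^4t,hx)$ multiplies $\|\cdot\|_{\dot H^2}$ by $h^{(8-n)/2}$, not $h^{(n-8)/2}$; for $n>8$ this shrinks as $h\to\infty$, and simultaneously the time $T$ at which growth is seen becomes $T/h^4\to 0$. So one needs $h$ large, not small.

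Once both corrections are made, your two-step rescaling collapses to the paper's single formula
\[
u^{(\nu,\lambda)}(t,x)=\lambda^2 w^\nu(\lambda^4 t,\lambda\nu x),
\]
with $\lambda$ large. The paper then fixes $\lambda$ by $\lambda^4(\lambda\nu)^{4-n}=\varepsilon^2$ so that $\|u^{(\nu,\lambda)}(0)\|_{H^2}\lesssim\varepsilon$ (this uses $\lambda\nu\ge 1$, automatic for small $\nu$, to control the full $H^2$-norm and not just $\dot H^2$), observes that the $\dot H^2$-norm at rescaled time $\lambda^{-4}t$ is $\simeq\varepsilon\|w^\nu(t)\|_{\dot H^2}\gtrsim\varepsilon t^2$, and finally chooses $\nu$ small enough that $t_\nu=c|\log\nu|^c$ satisfies both $\varepsilon t_\nu^2>\varepsilon^{-1}$ and $\lambda^{-4}t_\nu<\varepsilon$ (the latter holds because $\lambda\sim\nu^{-(n-4)/(n-8)}$ grows polynomially while $t_\nu$ grows only logarithmically). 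Your parameter-ordering remark is correct in spirit, but the actual dependencies run the other way.
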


\begin{proof}[Proof of Theorem \ref{IP-IPTheorem}]
For $\phi\in\mathcal{S}$ and $\nu\in (0,1]$, we let $w^{\nu}$ be the solution of equation \eqref{IP-SmallDispEqt} with initial data $w^{\nu}(0)=\phi$. By Lemma \ref{IP-SmallDispLemma}, we see that for $\vert s\vert\le c\vert\log \nu\vert^c$, \eqref{IP-SmallDispQuantitativeEst} holds true for $w^{0}$ as in \eqref{IP-DefOfW0}. Now, for $\lambda\in (0,\infty)$, we let
\begin{equation}\label{IP-DefOfSol}
 u^{(\nu,\lambda)}(t,x)=\lambda^2w^{\nu}(\lambda^4t,\lambda\nu x).
\end{equation}
Then $u^{(\nu,\lambda)}$ solves \eqref{CubicFOS} with initial data $u^{(\nu,\lambda)}(0,x)=\lambda^2\phi(\lambda\nu x)$.
A simple calculation gives
\begin{equation}\label{IP-CompOFHSNorm}
 \begin{split}
  \Vert u^{(\nu,\lambda)}(0)\Vert_{H^2}^2&=\frac{\lambda^4}{\left(2\pi\right)^n}\left(\lambda\nu\right)^{-2n}\int_{\mathbb{R}^n}\vert\hat{\phi}(\xi/(\lambda\nu))\vert^2(1+\vert\xi\vert^2)^2d\xi\\
&\lesssim\lambda^4\left(\lambda\nu\right)^{-n}\Big(\int_{\mathbb{R}^n}\vert\hat{\phi}(\eta)\vert^2\vert\lambda\nu\eta\vert^{4}d\eta+\int_{\mathbb{R}^n}\vert\hat{\phi}(\eta)\vert^2d\eta\Big)\\
&\lesssim_{\phi}\lambda^4\left(\lambda\nu\right)^{4-n},
 \end{split}
\end{equation}
provided that $\lambda\nu\ge1$.
Now, given $\varepsilon>0$, and $\nu>0$, we fix
\begin{equation}\label{IP-ChoiceOfLambda}
\lambda=\lambda_{\nu,\varepsilon}=\left(\varepsilon^2\nu^{n-4}\right)^{-\frac{1}{n-8}}
\end{equation}
such that
$\lambda^4\left(\lambda\nu\right)^{4-n}=\varepsilon^2$, and $\lambda\nu=\left(\varepsilon\nu^2\right)^{-\frac{2}{n-8}}>1$.
Independently, by \eqref{IP-DefOfW0}, we see that
\begin{equation*}\label{IP-EstimOfW0}
 \Vert w^{0}(t)\Vert_{\dot{H}^2}\gtrsim_{\phi}t^2+O(t),
\end{equation*}
and, consequently, using \eqref{IP-SmallDispQuantitativeEst}, we get that for $\vert s\vert\le c\vert\log\nu\vert^c$ sufficiently large independently of $\nu$, there holds that
\begin{equation}\label{IP-EstimOfW}
 \Vert w^{\nu}(s)\Vert_{\dot{H}^2}\gtrsim_{\phi}s^2.
\end{equation}
Consequently, using \eqref{IP-DefOfSol}, \eqref{IP-ChoiceOfLambda} and \eqref{IP-EstimOfW}
we get that
\begin{equation}\label{IP-EstimOfU}
\begin{split}
\Vert u^{(\nu,\lambda)}(\lambda^{-4}t)\Vert_{H^2}^2
&\ge\Vert u^{(\nu,\lambda)}(\lambda^{-4}t)\Vert_{\dot{H}^2}^2\\
&\ge \lambda^4\left(\lambda\nu\right)^{4-n}\Vert w^\nu(t)\Vert_{\dot{H}^2}^2\\
&\gtrsim_{\phi}\varepsilon^2 t^4
\end{split}
\end{equation}
for $t$ sufficiently large.
Now, given $\varepsilon$, we let $\nu>0$ be sufficiently small such that
\begin{equation}\label{IP-ChoiceOfNu}
\begin{split}
&\varepsilon^2 t_\nu^4>\varepsilon^{-2}\hskip.1cm,\hskip.1cm\hbox{for}\hskip.1cm t_\nu=c\vert\log\nu\vert^c\hskip.1cm,\hskip.1cm\hbox{and}\\ &\varepsilon^\frac{16-n}{n-8}\nu^\frac{4(n-4)}{n-8}<\varepsilon.
\end{split}
\end{equation}
We choose $\lambda=\lambda_{\nu,\varepsilon}$ as in \eqref{IP-ChoiceOfLambda}. Using \eqref{IP-ChoiceOfNu}, we get that $t_\varepsilon=\lambda^{-4}t_\nu<\varepsilon$, and then \eqref{IP-CompOFHSNorm} and \eqref{IP-EstimOfU} give \eqref{IP-IllPosednessResult}.
This finishes the proof.
\end{proof}

%%%%%%%%%%%%%%%%%%%%%%%%%%%%%%%%%%%%%%%%%%%%%%%%%%%%%%%%%%%%%%%%%%%%%%%%%%%%%%%%%%%%%%%%%%%%%%%%%%%%%%%%%%%%%%%%%%%%%%%%%%%%%%%%             3 Scenarii        %%%%%%%%%%%%%%%%%%%%%%%%%%%%%%%%%%%%%%%%%%%%%%%%%%%%%%%%%%%%%%%%%%%%%%%%%%%%%%%%%%%%%%%%%%%%%%%%%%%%%%%%%%%%%

\section{Reduction to three scenarii}\label{Section-3Scenarii}

>From now on we start with the analysis of the energy-critical case $n=8$. In this section we prove that the analysis can be reduced to the study of some very special solutions. In order to do so, we borrow ideas from previous works developed in the context of Schr\"odinger and wave equations by Bahouri and Gerard \cite{BahGer}, Kenig and Merle \cite{KenMer}, Keraani \cite{Ker},  Killip, Tao and Visan \cite{KilTaoVis}, and Tao, Visan and Zhang \cite{TaoVisZha}. We refer also to Pausader \cite{Pau2} for a similar result developed in the context of the $L^2$-critical fourth-order Schr\"odinger equation.
For any $E>0$, we let
\begin{equation}\label{3S-DefOfS}
 \Lambda(E)=\sup\{\Vert u\Vert_{Z(I)}^6: E(u)\le E\},
\end{equation}
where the supremum is taken over all maximal-lifespan solutions $u\in C(I,\dot{H}^2)$ of \eqref{CubicFOS} satisfying $E(u)\le E$. In light of Proposition \ref{Not-BCC-ClaimOfLE} and of the Strichartz estimates \eqref{Not-SE-StrichartzEstimatesWithGainI}, we know that there exists $\delta>0$ such that, for any $E\le \delta$, $\Lambda(E)\lesssim_\delta E<+\infty$. Besides, $\Lambda$ is clearly an increasing function of $E$. Hence, we can define \begin{equation}\label{3S-DefOfEmax}
E_{max}=\sup\{E>0:\Lambda(E)<\infty\}.
\end{equation}
The goal in Sections \ref{Section-3Scenarii}--\ref{Section-Cascade} is to prove that $E_{max}=+\infty$. Theorem \ref{3S-H2CritThm3S} below is a first step in this direction.

\begin{theorem}\label{3S-H2CritThm3S}
 Suppose that $E_{max}<+\infty$. There exists $u\in C(I,\dot{H}^2)$ a maximal-lifespan solution of energy exactly $E_{max}$ such that the $Z(I^\prime)$-norm of $u$ is infinite for $I^\prime = (T_\ast,0)$ and $I^\prime = (0,T^\ast)$, where $I=(T_\ast,T^\ast)$. Besides, 
there exist two smooth functions $h: I \to \mathbb{R}_+^\ast$ and $x: I \to \mathbb{R}^n$ such that 
\begin{equation}\label{3S-DefOk}
K=\{g_{(h(t),x(t))}u(t):t\in I\}
\end{equation}
is precompact in $\dot{H}^2$, where the transformation $g(t)=g_{(h(t),x(t))}$ is as in \eqref{I-DefOfG}. Furthermore, one can assume that 
one of the following three scenarii holds true:
(soliton-like solution) there holds $I=\mathbb{R}$ and $h(t)=1$ for all $t$;  
(double low-to-high cascade) there holds 
$\liminf_{t\to \bar{T}}h(t)=0$ for $\bar{T}=T_\ast,T^\ast$, and $h(t)\le 1$ for all $t$;
(self-similar solution) there holds $I=(0,+\infty)$ and $h(t)=t^{\frac{1}{4}}$ for all $t$.
\end{theorem}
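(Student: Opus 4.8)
The plan is to run the concentration-compactness/rigidity scheme of Kenig and Merle, in the form developed by Killip, Tao and Visan, at the level of $\dot{H}^2$-solutions. Assume $E_{max}<+\infty$. Since $E$ controls the $\dot{H}^2$-norm and since $\Vert\cdot\Vert_{Z}^6$ is additive under partitioning the time interval, the definitions \eqref{3S-DefOfS}--\eqref{3S-DefOfEmax} together with the stability theory of Proposition \ref{Not-S-StabProp2} provide a sequence of maximal-lifespan solutions $u_k\in C(I_k,\dot{H}^2)$ with $0\in I_k$, $E(u_k)\to E_{max}$, and $\Vert u_k\Vert_{Z(I_k\cap(-\infty,0])}\to+\infty$, $\Vert u_k\Vert_{Z(I_k\cap[0,\infty))}\to+\infty$. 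The heart of the proof is a \emph{Palais--Smale condition modulo the symmetry group} $\mathcal{G}$ generated by the scaling-translations $g_{(h,x_0)}$ of \eqref{I-DefOfG}, time translations and phase rotations: after recentering, $g_{(h_k,x_k)}u_k(t_k)$ converges strongly in $\dot{H}^2$ along a subsequence, for suitable $h_k,x_k,t_k$; its limit is then the datum of the desired minimal-energy solution.

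The first step is a linear profile decomposition adapted to $e^{it\Delta^2}$ and $\mathcal{G}$, in the spirit of the decompositions of Bahouri and Gerard and of Keraani: any sequence bounded in $\dot{H}^2$ can be written, up to a subsequence, as
\[
\phi_k=\sum_{j=1}^{J}g_{(h_k^j,x_k^j)}\left(e^{it_k^j\Delta^2}\psi^j\right)+w_k^J,
\]
with parameters pairwise orthogonal in the usual sense (divergence of $h_k^j/h_k^{j'}+h_k^{j'}/h_k^j$, or of suitably renormalized differences of the $x_k^j$'s or of the $t_k^j$'s), with $\limsup_k\Vert e^{it\Delta^2}w_k^J\Vert_{Z(\mathbb{R})}\to 0$ as $J\to\infty$, and with asymptotic Pythagorean expansions of $\Vert\cdot\Vert_{\dot{H}^2}^2$ and of $E$ along the decomposition. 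This rests on Proposition \ref{StrichPropEst}, a refined Strichartz/Sobolev inequality, and the frequency-dependent dispersive estimates for $e^{it\Delta^2}$; thanks to the gain of derivatives in \eqref{Not-SE-StrichartzEstimatesWithGainI} no Galilean (frequency-center) parameter enters, which simplifies the construction compared with the second-order Schr\"odinger case.

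The second step passes to nonlinear profiles and uses the stability theory. One associates to $\psi^j$ the nonlinear profile $U^j$ solving \eqref{CubicFOS}; if two or more profiles are nonzero, the Pythagorean expansion of $E$ forces $E(U^j)<E_{max}$ for every profile with bounded time parameter, hence each such $U^j$ is global with finite $Z$-norm, while the remaining profiles have linear evolution already small in $Z$ on the relevant half-line. Then the superposition of the rescaled $U^j$'s plus $e^{it\Delta^2}w_k^J$ is an approximate solution of \eqref{CubicFOS} with a priori bounded $Z$- and $\dot{S}^2$-norms and an arbitrarily small error term, so Proposition \ref{Not-S-StabProp2} gives $\Vert u_k\Vert_{Z(I_k)}\lesssim 1$, contradicting the choice of $u_k$. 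Therefore exactly one profile is nonzero, $w_k^J\to 0$, and after renormalization $g_{(h_k,x_k)}u_k(t_k)\to u_0^c$ in $\dot{H}^2$ with $E(u_0^c)=E_{max}$, which is the Palais--Smale condition.

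The third step feeds $u_0^c$ into Proposition \ref{Not-BCC-ClaimOfLE} to produce $u\in C(I,\dot{H}^2)$, $I=(T_\ast,T^\ast)$, with $E(u)=E_{max}$; by construction and Proposition \ref{Not-S-StabProp2} its $Z$-norm is infinite on both $(T_\ast,t_0)$ and $(t_0,T^\ast)$ for every $t_0\in I$, since a finite $Z$-norm there would propagate to a near-minimizing sequence. Applying the Palais--Smale condition to an arbitrary sequence of times in $I$ shows that the orbit of $u$ modulo $\mathcal{G}$ is precompact in $\dot{H}^2$; using the continuity of $t\mapsto u(t)$ one selects continuous $h:I\to\mathbb{R}_+^\ast$ and $x:I\to\mathbb{R}^n$ for which $K$ in \eqref{3S-DefOk} is precompact, and mollifying them --- their effect on the orbit being controlled by the compactness --- makes $h,x$ smooth. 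Finally, the trichotomy follows from a study of $h$: compactness yields local constancy, $h(t)\simeq h(s)$ whenever $|t-s|\lesssim h(s)^{4}$, together with the local-existence bound $h(t)^{4}\lesssim\mathrm{dist}(t,\partial I)$; distinguishing whether $h$ is bounded above and, if so, whether $\inf_{I} h>0$, and then rescaling by a fixed element of $\mathcal{G}$ (after zooming at times where $h$ approaches an extremal value), one reduces to one of the three normalized regimes: $I=\mathbb{R}$ with $h\equiv 1$, $I=\mathbb{R}$ with $h\le 1$ and $\liminf_{t\to\pm\infty}h(t)=0$, or $I=(0,+\infty)$ with $h(t)=t^{1/4}$. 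The main obstacle is the linear profile decomposition and its stability-based nonlinear upgrade --- in particular handling the frequency-dependent dispersion of $e^{it\Delta^2}$ in the refined Strichartz step and the bookkeeping required to superpose the nonlinear profiles and invoke Proposition \ref{Not-S-StabProp2}.
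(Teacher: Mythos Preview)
Your outline matches the paper's proof closely: both run the Kenig--Merle/Keraani profile decomposition (via a refined Sobolev--Strichartz inequality for $e^{it\Delta^2}$, with no Galilean parameter thanks to the derivative gain in \eqref{Not-SE-StrichartzEstimatesWithGainI}), upgrade to nonlinear profiles, use Proposition~\ref{Not-S-StabProp2} to force a single profile and hence the Palais--Smale condition, and then extract the almost-periodic minimal solution with compact orbit $K$.

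There is one point where the paper does something you do not flag. Your trichotomy sketch (``distinguish whether $h$ is bounded above and, if so, whether $\inf_I h>0$'') is the standard Killip--Tao--Visan picture, but applied verbatim it produces a \emph{high-to-low} cascade ($h\ge 1$, $\limsup h=\infty$), not the \emph{low-to-high} cascade ($h\le 1$, $\liminf h=0$) stated in the theorem. The direction matters: the later sections (the frequency-localized Morawetz argument in Proposition~\ref{FLIME-FLIMEProp} and the cascade exclusion in Proposition~\ref{Cas-L2Regprop}) rely crucially on $h\le 1$. The paper obtains the correct direction by a small but deliberate modification of the Killip--Tao--Visan oscillation analysis: when $Osc(\kappa)$ is unbounded, instead of the quantity $a(t_0)$ measuring ratios to suprema of $h$, it introduces
\[
b(t_0)=\min\left(\frac{h(t_0)}{\inf_{t\ge t_0}h(t)},\ \frac{h(t_0)}{\inf_{t\le t_0}h(t)}\right),
\]
and then replaces future/past-\emph{focusing} times by future/past-\emph{defocusing} times in the self-similar branch. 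You should make this modification explicit; without it your trichotomy does not land on the normalization asserted in the statement.
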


As a remark, since $E(u)=E_{max}$, the solution $u$ in Theorem \ref{3S-H2CritThm3S} is such that $u\ne 0$. Assuming Propositions \ref{SS-NoSelfSimilarSolutionProp}, \ref{Sol-NoSolitonProp} and \ref{Cas-L2Regprop} which exclude the three scenarii in Theorem \ref{3S-H2CritThm3S}, the following corollary holds true.

\begin{corollary}\label{3S-Cor}
For any $E>0$, there exists $C=C(E)$ such that, for any $u_0\in \dot{H}^2$ satisfying $E(u_0)\le E$, if $u\in C(I,\dot{H}^2)$ is the maximal solution of \eqref{CubicFOS} with initial data $u(0)=u_0$, then $I=\mathbb{R}$ and $\Vert u\Vert_{\dot{S}^2(\mathbb{R})}\le C$.
\end{corollary}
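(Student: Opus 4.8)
The plan is to prove Corollary \ref{3S-Cor} by first deducing $E_{max}=+\infty$ from Theorem \ref{3S-H2CritThm3S} together with the three exclusion results, and then converting the resulting finiteness of $\Lambda$ into the quantitative global $\dot S^2$-bound via a finite Strichartz iteration.

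First I would argue by contradiction. If $E_{max}<+\infty$, Theorem \ref{3S-H2CritThm3S} produces a nonzero maximal-lifespan solution $u\in C(I,\dot H^2)$ of energy exactly $E_{max}$, with infinite $Z$-norm on both $(T_\ast,0)$ and $(0,T^\ast)$, whose orbit $K=\{g_{(h(t),x(t))}u(t):t\in I\}$ is precompact in $\dot H^2$, and which belongs to one of the three scenarii. Proposition \ref{SS-NoSelfSimilarSolutionProp} excludes the self-similar case, Proposition \ref{Sol-NoSolitonProp} the soliton-like case, and Proposition \ref{Cas-L2Regprop} the double low-to-high cascade. Since no scenario survives, the assumption $E_{max}<+\infty$ is untenable, so $E_{max}=+\infty$ and hence $\Lambda(E)<+\infty$ for every $E>0$.

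Next, fix $E>0$, let $u_0\in\dot H^2$ with $E(u_0)\le E$, and let $u\in C(I,\dot H^2)$ be the maximal solution with $u(0)=u_0$. By the definition \eqref{3S-DefOfS} of $\Lambda$ and conservation of energy we have $\Vert u\Vert_{Z(I)}^6\le\Lambda(E)=:L<+\infty$, while the defocusing sign gives the uniform bound $\Vert u(t)\Vert_{\dot H^2}^2\le 2E$ for all $t\in I$. I would then split $I$ into $N=N(L,\delta)$ consecutive subintervals $I_1,\dots,I_N$ on each of which $\Vert u\Vert_{Z(I_j)}\le\delta$, with $\delta$ the small threshold of Propositions \ref{Not-BCC-ClaimOfLE} and \ref{Not-S-StabProp2}. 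On each $I_j$, feeding the Duhamel formula \eqref{DefinitionOfSolution} into the Strichartz estimate \eqref{Not-SE-StrichartzEstimatesWithGainII} and bounding the nonlinearity by $\Vert\vert u\vert^2u\Vert_{\dot{\bar{S}}^2(I_j)}\lesssim\Vert u\Vert_{\dot S^2(I_j)}\Vert u\Vert_{Z(I_j)}^2$, one gets (after a routine continuity argument guaranteeing finiteness of $\Vert u\Vert_{\dot S^2(I_j)}$) the bound $\Vert u\Vert_{\dot S^2(I_j)}\lesssim\Vert u(t_j)\Vert_{\dot H^2}+\delta^2\Vert u\Vert_{\dot S^2(I_j)}$ at an endpoint $t_j$ of $I_j$; for $\delta$ small the last term is absorbed and $\Vert u\Vert_{\dot S^2(I_j)}\lesssim E^{1/2}$. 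Chaining the $N$ estimates, each restarted from the $\dot H^2$-datum at the previous endpoint (again of size $\le\sqrt{2E}$), gives $\Vert u\Vert_{\dot S^2(I)}\le C(E)$; in particular $u\in\dot S^2(I)\cap Z(I)$, so $u$ cannot leave $\dot H^2$ at a finite endpoint and Proposition \ref{Not-BCC-ClaimOfLE} forces $I=\mathbb{R}$, which is the assertion.

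The corollary is therefore soft: granting the three exclusion propositions, its only analytic content is the standard ``finite $Z$-norm implies finite $\dot S^2$-norm and global existence'' mechanism, whose one nontrivial ingredient is the nonlinear estimate $\Vert\vert u\vert^2u\Vert_{\dot{\bar{S}}^2(J)}\lesssim\Vert u\Vert_{\dot S^2(J)}\Vert u\Vert_{Z(J)}^2$ on a subinterval $J$ — proved by a paraproduct/fractional-Leibniz decomposition of $\Delta(\vert u\vert^2u)$ with the exponents of \eqref{N-DefinitionOfNorms} and the admissibility relation \eqref{Not-SE-DefinitionOfSadmissible}, and already built into Proposition \ref{Not-BCC-ClaimOfLE}, so on each $I_j$ one may instead simply invoke the stability Proposition \ref{Not-S-StabProp2} with $\tilde u=u$, $e=0$, $\Lambda\simeq\sqrt E$. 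The main obstacle thus does not lie in this corollary at all but in establishing the three exclusion propositions, the subject of the remaining sections; here one only has to track that the final constant depends on $E$ solely through $L=\Lambda(E)$ and the universal thresholds $\delta$ and $N(L,\delta)$, which is where the finiteness of the number of subintervals is used.
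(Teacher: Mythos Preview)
Your proposal is correct and follows essentially the same approach as the paper: argue by contradiction that $E_{max}=+\infty$ via Theorem \ref{3S-H2CritThm3S} and the three exclusion propositions, then convert the resulting $Z$-bound into an $\dot S^2$-bound and global existence. The only difference is cosmetic: where the paper simply invokes \cite[Proposition 2.6]{Pau1} for the implication ``finite $Z$-norm $\Rightarrow$ finite $\dot S^2$-norm and $I=\mathbb{R}$'', you spell out that argument explicitly via the subinterval decomposition and Strichartz bootstrap.
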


\begin{proof}[Proof of Corollary \ref{3S-Cor}]

First, using \cite[Proposition $2.6.$]{Pau1}, we see that a bound on the $Z$-norm of $u$ implies a bound on the $\dot{S}^2$-norm of $u$. Hence if Corollary \ref{3S-Cor} is false, then $E_{max}<+\infty$. Applying Theorem \ref{3S-H2CritThm3S}, we find a
maximal solution satisfying one of the three scenarii in Theorem \ref{3S-H2CritThm3S}. Then, using Propositions \ref{SS-NoSelfSimilarSolutionProp}, \ref{Sol-NoSolitonProp} and \ref{Cas-L2Regprop}, we get a contradiction. Hence $E_{max}=+\infty$.
\end{proof}

Now we prove Theorem \ref{3S-H2CritThm3S}.

\begin{proof}[Proof of Theorem \ref{3S-H2CritThm3S}]
In several ways the proof is similar to the one developed in the $L^2$-critical case in Pausader \cite{Pau2}. We prove the more general statement that Theorem \ref{3S-H2CritThm3S} holds true in any dimension $n\ge 5$ when \eqref{CubicFOS} is replaced by the $\dot{H}^2$-critical equation. In particular, this is the case when $n=8$. Therefore, in this proof, \eqref{CubicFOS} always refers to the energy-critical equation in dimension $n$, and the energy $E$ and $\Lambda$ must be replaced by
\begin{equation*}
 \begin{split}
  &E(u)=\int_{\mathbb{R}^n}\left(\frac{1}{2}\vert\Delta u(x)\vert^2+\frac{n-4}{2n}\vert u(x)\vert^\frac{2n}{n-4}\right)dx\hskip.2cm\hbox{and}\\
  &\Lambda(E)=\sup\{\Vert u\Vert_Z^\frac{2(n+4)}{n-4}:E(u)\le E\},
 \end{split}
\end{equation*}
where the supremum is taken over all maximal solutions of the energy-critical equation of energy less or equal to $E$. Besides, the definition of $\tau$ and $g$ as in \eqref{I-RescalingLaw} and \eqref{I-DefOfG} and Propositions \ref{Not-BCC-ClaimOfLE} and \ref{Not-S-StabProp2} refer to their $n$-dimensional energy-critical counterparts.
A consequence of the precised Sobolev's inequality in Gerard, Meyer and Oru \cite{GerMeyOru} and of the Strichartz estimates \eqref{Not-SE-StrichartzEstimatesWithGainI} is that, for any $u_0\in\dot{H}^2$,
\begin{equation}\label{3S-PrecisedSobolevInequality}
\begin{split}
\Vert e^{it\Delta^2}u_0\Vert_{Z(\mathbb{R})}&\lesssim \Vert e^{it\Delta^2}\vert\nabla\vert u_0\Vert_{L^\frac{2(n+4)}{n-2}L^\frac{2(n+4)}{n-2}}^\frac{n-4}{n-2}\Vert e^{it\Delta^2}\vert\nabla\vert u_0\Vert_{L^\infty L^\frac{2n}{n-2}}^\frac{2}{n-2}\\
&\lesssim \Vert u_0\Vert_{\dot{H}^2}^\frac{n-4}{n-2}\Vert e^{it\Delta^2}\vert \nabla\vert u_0\Vert_{L^\infty\dot{H}^1}^\frac{2}{n}\Vert e^{it\Delta^2}\vert\nabla\vert u_0\Vert_{L^\infty \dot{B}^1_{2,\infty}}^\frac{4}{n(n-2)}\\
&\lesssim \Vert u_0\Vert_{\dot{H}^2}^\frac{n^2-2n-4}{n(n-2)}\Vert u_0\Vert_{\dot{B}^2_{2,\infty}}^\frac{4}{n(n-2)},
\end{split}
\end{equation}
where for $s=1,2$, $\dot{B}^s_{2,\infty}$ is a standard homogeneous Besov space. Now, thanks to \eqref{3S-PrecisedSobolevInequality},
we may follow the analysis in Bahouri and Gerard \cite{BahGer} and Keraani \cite{Ker}. In the following, we call scale-core a sequence $(h_k,t_k,x_k)$ such that for every $k$, $h_k>0$, $t_k\in\mathbb{R}$ and $x_k\in\mathbb{R}^n$. Mimicking the proof in Keraani \cite{Ker}
we obtain that
for $(v_k)_k$ a bounded sequence in $\dot{H}^2$,
there exists a sequence $(V^\alpha)_\alpha$ in $\dot{H}^2$, and scale-cores $(h^\alpha_k,t^\alpha_k,x^\alpha_k)$ such that for any $\alpha\ne\beta$,
\begin{equation}\label{3S-OrthogonalSequences}
\left\vert\log\frac{h_k^\alpha}{h_k^\beta}\right\vert+\left(h_k^\alpha\right)^4\left\vert t_k^\alpha-t_k^\beta\right\vert+h_k^\alpha\left\vert x_k^\alpha-x_k^\beta\right\vert\to+\infty
\end{equation}
as $k\to +\infty$,
with the property that, up to a subsequence, for any $A \ge 1$,
\begin{equation}\label{3S-ZZZ} 
v_k= \sum_ {\alpha=1}^A g_{(h_k^\alpha,x_k^\alpha)}\left(e^{-i\left(h_k^\alpha\right)^4t_k^\alpha\Delta^2}V^\alpha\right)+w_k^A
\end{equation}
for all $k$, where $w_k^A \in \dot{H}^2$ for all $k$ and $A$, and
\begin{equation}\label{3S-Doublestar}
\lim_{A\to +\infty}\limsup_{k\to +\infty}\Vert e^{it\Delta^2}w_k^A\Vert_Z = 0.
\end{equation} 
Moreover, we have the following estimates:
\begin{equation}\label{3S-TTT}
\begin{split}
&\Vert e^{it\Delta^2}v_k\Vert_Z^{\frac{2(n+4)}{n-4}} = \sum_{\alpha=1}^{+\infty}\Vert e^{it\Delta^2}V^\alpha\Vert_Z^{\frac{2(n+4)}{n-4}} + o(1)\hskip.1cm\hbox{and},\\
&E(v_k)=\sum_{\alpha=1}^AE(e^{-i(h_k^\alpha)^4t_k\Delta^2}V^\alpha)+\Vert w_k^A\Vert_{\dot{H}^2}^2+o(1)
\end{split}
\end{equation}
for all $k$, where $o(1) \to 0$ as $k \to +\infty$.
Let $(V,(h_k)_k,(t_k)_k,(x_k)_k)$ be such that $V\in \dot{H}^2$, and $(h_k,t_k,x_k)\in\mathbb{R}_+\times\mathbb{R}\times\mathbb{R}^n$ is a scale-core such that $h_k^4t_k$ has a limit $l\in[-\infty,+\infty]$ as $k\to +\infty$. We say that $U$ is the nonlinear profile associated to $(V,(h_k)_k,(t_k)_k,(x_k)_k)$ if $U$ is a solution of \eqref{CubicFOS} defined on a neighborhood of $-l$, and
\begin{equation*}
\Vert U(-h_k^4t_k)-e^{-ih_k^4t_k\Delta^2}V\Vert_{\dot{H}^2}\to 0
\end{equation*}
as $k\to +\infty$. Using the analysis in Pausader \cite{Pau1}, it is easily seen that a nonlinear profile always exists and is unique.
Besides if
\begin{equation}\label{3S-AAA}
E(U)=\lim_kE(e^{-ih_k^4t_k\Delta^2}V)
\end{equation}
is such that $E(U)<E_{max}$, then the associated nonlinear profile $U$ is globally defined, and
\begin{equation*}\label{3S-LastEqt2}
\Vert U\Vert_{\dot{S}^2(\mathbb{R})}\lesssim_{E(U)} 1.
\end{equation*}
Now, we enter more specifically into the proof of Theorem \ref{3S-H2CritThm3S}. A consequence of Proposition \ref{Not-S-StabProp2} is that there exists a sequence of nonlinear solutions $u_k$ such that $E(u_k)<E_{max}$, $E(u_k)\to E_{max}$, and
\begin{equation}\label{3S-Blow-upCrit}
\Vert u_k\Vert_{Z(-\infty,0)}\hskip.1cm,\hskip.1cm \Vert u_k\Vert_{Z(0,+\infty)}\to +\infty.
\end{equation}
We let $((h^\alpha_k)_k,(t^\alpha_k)_k,(x^\alpha_k)_k)=({\bf h}^\alpha,{\bf z}^\alpha)$, $V^\alpha$, and ${\bf w}^A$ be given by \eqref{3S-ZZZ} applied to the sequence $(v_k=u_k(0))_k$.
Passing to subsequences, and using a diagonal extraction argument, we can assume that, for all $\alpha$, $\left(h_k^\alpha\right)^4t_k^\alpha$ has a limit in $[-\infty,\infty]$. We let $U^\alpha$ be the nonlinear profile associated to $(V^\alpha,{\bf h}^\alpha,{\bf z}^\alpha)$.
Suppose first that there exists $\alpha$ such that $0<E(U^\alpha)<E_{max}$. Then, applying \eqref{3S-TTT} and \eqref{3S-AAA}, we see that there exists $\varepsilon>0$ such that for any $\beta$, $E\left(U^\beta\right)<E_{max}-\varepsilon$, and we get that all the nonlinear profiles are globally defined. Letting $W_k^A(t)=e^{it\Delta^2}w_k^A$, we remark that \begin{equation*}
p_k^A=\sum_{\alpha=1}^A\tau_{(h^\alpha_k,z^\alpha_k)}U^\alpha+W_k^A
\end{equation*}
satisfies \eqref{Not-S-AlmostSolution} with \begin{equation*}
e=e_k^A=f(\sum_{\alpha=1}^A\tau_{(h^\alpha_k,z^\alpha_k)}U^\alpha+W_k^A)-\sum_{\alpha=1}^Af(\tau_{(h^\alpha_k,z^\alpha_k)}U^\alpha)
\end{equation*}
and initial data $p_k^A(0)=u_k(0)+o_A(1)$, where $f(x)=\vert x\vert^\frac{8}{n-4}x$.
First, we claim that
\begin{equation}\label{3S-SumNonlinearProfileBoundedX}
\limsup_k\Vert \sum_{\alpha=1}^A\tau_{(h^\alpha_k,z^\alpha_k)}U^\alpha\Vert_{Z}\lesssim_{E_{max},\varepsilon}1
\end{equation}
independently of $A$.
Indeed, we remark that when $(h^\alpha_k,t^\alpha_k,x^\alpha_k)$ and $(h^\beta_k,t^\beta_k,x^\beta_k)$ satisfy \eqref{3S-OrthogonalSequences}, then for any $u$, $v$ with finite $Z$-norm, there holds that
\begin{equation}\label{3S-WeakConvStat}
\Vert \vert \tau_{(h^\beta_k,t^\beta_k,x^\beta_k)} v\vert^\frac{n+12}{n-4}\tau_{(h^\alpha_k,t^\alpha_k,x^\alpha_k)}u\Vert_{L^1(\mathbb{R},L^1)}\to 0
\end{equation}
as $k\to +\infty$, where $\tau_{(h_k,t_k,x_k)}$ is as in \eqref{I-RescalingLaw}.
Now, since $\Lambda$ is sublinear around $0$, and bounded on $[0,E_{max}-\varepsilon]$, using \eqref{3S-TTT} and \eqref{3S-WeakConvStat}, we get that
\begin{equation*}
\begin{split}
\limsup_k\Vert \sum_{\alpha=1}^A\tau_{(h^\alpha_k,z^\alpha_k)}U^\alpha\Vert_{Z}&= \left(\sum_{\alpha=1}^A \Vert U^\alpha\Vert_Z^\frac{2(n+4)}{n-4}\right)^\frac{n-4}{2(n+4)}\\
&\lesssim\left(\sum_{\alpha=1}^A\Lambda(E(U^\alpha))\right)^\frac{n-4}{2(n+4)}\\
&\lesssim_{E_{max},\varepsilon}\left(\sum_{\alpha=1}^A E\left(U^\alpha\right)\right)^\frac{n-4}{2(n+4)}\\
&\lesssim_{E_{max},\varepsilon}1.
\end{split}
\end{equation*}
Using again \eqref{3S-WeakConvStat}, we get that
\begin{equation}\label{3S-ProofNLMainTHmEqt1X1}
\begin{split}
 &\Vert f(\sum_{\alpha=1}^A\tau_{(h^\alpha_k,z^\alpha_k)}U^\alpha)-
\sum_{\alpha=1}^Af(\tau_{(h^\alpha_k,z^\alpha_k)}U^\alpha)\Vert_{L^2(\mathbb{R},L^2)}=o_A(1)
\end{split}
\end{equation}
as $k\to +\infty$.
On the other hand, using the blow-up criterion in Pausader \cite[Proposition $2.6.$]{Pau1}, and the bound $\Vert U^\alpha\Vert_{Z}\le \Lambda\left(E(U^\alpha)\right)\le\Lambda\left(E_{max}-\varepsilon\right)$, we get that, for any $\alpha$,
\begin{equation*}
 \Vert U^\alpha \Vert_{M}\lesssim_{E_{max},\varepsilon}1.
\end{equation*}
Using
the Leibnitz and chain rules for fractional derivative in Kato \cite{Kat} and Visan \cite[Appendix A]{Vis}, we obtain that
\begin{equation}\label{3S-ProofNLMainTHmEqt1X2}
 \Vert f(\sum_{\alpha=1}^A\tau_{(h^\alpha_k,z^\alpha_k)}U^\alpha)-
\sum_{\alpha=1}^Af(\tau_{(h^\alpha_k,z^\alpha_k)}U^\alpha)\Vert_{L^2(\mathbb{R},\dot{H}^{\frac{n+8}{n+4},\frac{2n(n+4)}{n^2+6n+16}})}\lesssim_{A,E_{max},\varepsilon} 1.
\end{equation}
Interpolating between \eqref{3S-ProofNLMainTHmEqt1X1} and \eqref{3S-ProofNLMainTHmEqt1X2}, we get that
\begin{equation}\label{3S-ProofNLMainTHmEqt1X}
 \Vert f(\sum_{\alpha=1}^A\tau_{(h^\alpha_k,z^\alpha_k)}U^\alpha)-
\sum_{\alpha=1}^Af(\tau_{(h^\alpha_k,z^\alpha_k)}U^\alpha)\Vert_{N}=o_A(1).
\end{equation}
Now, we claim that, letting $s^A_k=\sum_{\alpha=1}^A\tau_{(h^\alpha_k,z^\alpha_k)}U^\alpha$, there holds that
\begin{equation}\label{3S-SAKBounded}
\limsup_k\Vert s^A_k\Vert_{M}\lesssim_{E_{max},\varepsilon}1,
\end{equation}
independently of $A$. Indeed, $s^A_k$ satisfies the equation
\begin{equation*}\label{3S-EquationForSAk}
i\partial_ts^A_k+\Delta^2s^A_k+\sum_{\alpha=1}^Af(\tau_{(h^\alpha_k,z^\alpha_k)}U^\alpha)=0,
\end{equation*}
with initial data
\begin{equation*}
s^A_k(0)=\sum_{\alpha=1}^A\tau_{(h^\alpha_k,z^\alpha_k)}U^\alpha(0)=\sum_{\alpha=1}^Ag_{(h^\alpha_k,x^\alpha_k)}e^{-i\left(h_k^\alpha\right)^4t^\alpha_k\Delta^2}V^\alpha+o_A(1),
\end{equation*}
and consequently \eqref{3S-TTT} and \eqref{3S-AAA} give that
\begin{equation*}
\Vert s^A_k(0)\Vert_{\dot{H}^2}^2\le 2E\left(s^A_k(0)\right)\lesssim_{E_{max}} 1+o_A(1).
\end{equation*}
Using the Strichartz estimates \eqref{Not-SE-StrichartzEstimatesWithGainI}, \eqref{3S-SumNonlinearProfileBoundedX} and \eqref{3S-ProofNLMainTHmEqt1X}, we get that
\begin{equation}\label{3S-StricEstForSAK}
\begin{split}
\Vert s^A_k\Vert_{M}&\lesssim \Vert s^A_k(0)\Vert_{\dot{H}^2}+\Vert \sum_{\alpha=1}^Af(\tau_{(h^\alpha_k,z^\alpha_k)}U^\alpha)\Vert_{N}\\
&\lesssim E\left(s^A_k(0)\right)^\frac{1}{2}+o_A(1)+\Vert
f(\sum_{\alpha=1}^A\tau_{(h^\alpha_k,z^\alpha_k)}U^\alpha)\Vert_{N}\\
&\lesssim_{E_{max}} 1+o_A(1)+\Vert s^A_k\Vert_Z^\frac{8}{n-4}\Vert s^A_k\Vert_{W}\\
&\lesssim_{E_{max}} 1+o_A(1)+\Vert s^A_k\Vert_{Z}^\frac{8}{n-4}\Vert s^A_k\Vert_Z^\frac{1}{2}\Vert s^A_k\Vert_{M}^\frac{1}{2}\\
&\lesssim_{E_{max},\varepsilon}1+o_A(1)+\Vert s^A_k\Vert_{M}^\frac{1}{2}\\
&\lesssim_{E_{max},\varepsilon}1+o_A(1)
\end{split}
\end{equation}
and \eqref{3S-StricEstForSAK} proves \eqref{3S-SAKBounded}.
Independently,
\begin{equation}\label{3S-ProofNLMainTHmEqt2X1}
 \begin{split}
  &\Vert f(\sum_{\alpha=1}^A\tau_{(h^\alpha_k,z^\alpha_k)}U^\alpha+W_k^A)-f(\sum_{\alpha=1}^A\tau_{(h^\alpha_k,z^\alpha_k)}U^\alpha)\Vert_{L^2(\mathbb{R},L^2)}\\
&\lesssim \Vert W_k^A\Vert_{Z}\left(\Vert W_k^A\Vert_{Z}^\frac{8}{n-4}+\Vert \sum_{\alpha=1}^A\tau_{(h^\alpha_k,z^\alpha_k)}U^\alpha\Vert_{Z}^\frac{8}{n-4}\right)\\
&\lesssim_{E_{max},\varepsilon} \Vert W_k^A\Vert_{Z}\left(\Vert W_k^A\Vert_{Z}^\frac{8}{n-4}+1\right)\\
&\lesssim_{E_{max},\varepsilon} \Vert W_k^A\Vert_{Z}
 \end{split}
\end{equation}
and again, using \eqref{3S-SAKBounded} and the product and Leibnitz rules for fractional derivatives, we get that
\begin{equation}\label{3S-ProofNLMainTHmEqt2X2}
 \Vert f(\sum_{\alpha=1}^A\tau_{(h^\alpha_k,z^\alpha_k)}U^\alpha+W_k^A)-f(\sum_{\alpha=1}^A\tau_{(h^\alpha_k,z^\alpha_k)}U^\alpha)\Vert_{L^2(\mathbb{R},\dot{H}^{\frac{n+8}{n+4},\frac{2n(n+4)}{n^2+6n+1}})}\lesssim_{E_{max},\varepsilon}1.
\end{equation}
Interpolating between \eqref{3S-ProofNLMainTHmEqt2X1} and \eqref{3S-ProofNLMainTHmEqt2X2}, we obtain that
\begin{equation}\label{3S-ProofNLMainTHmEqt2X}
 \Vert f(\sum_{\alpha=1}^A\tau_{(h^\alpha_k,z^\alpha_k)}U^\alpha+W_k^A)-f(\sum_{\alpha=1}^A\tau_{(h^\alpha_k,z^\alpha_k)}U^\alpha)\Vert_{N}\lesssim_{E_{max},\varepsilon}\Vert W_k^A\Vert_{Z}^\frac{4}{n+8}
\end{equation}
and \eqref{3S-Doublestar}, \eqref{3S-ProofNLMainTHmEqt1X} and \eqref{3S-ProofNLMainTHmEqt2X} show that
\begin{equation}\label{3S-ProofAddedNewEst1}
\limsup_k\Vert e^A_k\Vert_{N}=o(1)
\end{equation}
as $A\to +\infty$.
Independently,
\begin{equation}\label{3S-BoundOnpkANormX}
\begin{split}
\Vert p_k^A\Vert_{W}&\le \Vert \sum_{\alpha=1}^A\tau_{(h^\alpha_k,z^\alpha_k)}U^\alpha\Vert_{W}+\Vert W_k^A\Vert_{W}\\
&\lesssim_{E_{max},\varepsilon} 1+o_A(1).
\end{split}
\end{equation}
Now using Proposition \ref{Not-S-StabProp2}, \eqref{3S-ProofAddedNewEst1} and \eqref{3S-BoundOnpkANormX}, since $p_k^A(0)=u_k(0)+o_A(1)$, we get that
\begin{equation*}
\begin{split}
 \limsup_k \Vert u_k\Vert_Z^\frac{2(n+4)}{n-4}&\lesssim \lim_{A\to +\infty}\limsup_k\Vert p_k^A\Vert_Z^\frac{2(n+4)}{n-4}\\
 &\lesssim \sum_\alpha \Vert U^\alpha\Vert_Z^\frac{2(n+4)}{n-4}\lesssim_{E_{max},\varepsilon}\sum_\alpha E\left(U^\alpha\right)\lesssim_{E_{max},\varepsilon}1
 \end{split}
\end{equation*}
and this contradicts \eqref{3S-Blow-upCrit}.
Now, suppose that for all $\alpha$, we have that $V^\alpha=0$. Then Strichartz estimates \eqref{Not-SE-StrichartzEstimatesWithGainI} and \eqref{3S-TTT} give that
\begin{equation*}
\begin{split}
\Vert e^{it\Delta^2}u_k(0)\Vert_W&\le\Vert e^{it\Delta^2}u_k(0)\Vert_M^\frac{1}{2}\Vert e^{it\Delta^2}u_k(0)\Vert_{Z}^\frac{1}{2}\\
&\lesssim E_{max}^\frac{1}{2}\Vert e^{it\Delta^2}u_k(0)\Vert_{Z}^\frac{1}{2}\to 0
\end{split}
\end{equation*} as $k\to +\infty$, and Proposition \ref{Not-BCC-ClaimOfLE} gives that $\Vert u_k\Vert_Z\to 0$, which contradicts \eqref{3S-Blow-upCrit}.
Consequently, we know that there exists a scale core $(h_k,t_k,y_k)$, and $V\in \dot{H}^2$ such that
\begin{equation*}
u_k(0)=g_{(h_k,y_k)}e^{-it_kh_k^4\Delta^2}V+w_k,
\end{equation*}
where $E\left(w_k\right)\to 0$.
Now, up to passing to a subsequence, we can assume that $t_kh_k^4\to l\in [-\infty,+\infty]$. If $l\in\mathbb{R}$, then, 
replacing $V$ by $e^{-il\Delta^2}V$, we can assume that $l=0$, and changing slightly $w_k$, we can assume that for any $k$, $t_k=0$.
We then get that $u_k(0)=g_{(h_k,y_k)}V+o(1)$ in $\dot{H}^2$, and in particular $E(V)=E_{max}$.
Otherwise, by time reversal symmetry, we can assume that $l=-\infty$, and then, we find that
\begin{equation*}\label{Contradl=InftyEqt1}
 \begin{split}
  \Vert e^{it\Delta^2}u_k(0)\Vert_{Z([0,+\infty))}&\le \Vert \tau_{(h_k,t_k,y_k)}\left(e^{it\Delta^2}V\right)\Vert_{Z([0,+\infty))}+\Vert w_k\Vert_{Z([0,+\infty))}\\
&\le \Vert e^{it\Delta^2}V\Vert_{Z([-h_k^4t_k,+\infty))}+o(1)\\
&=o(1),
 \end{split}
\end{equation*}
and by standard developements, we get that, for $k$ sufficiently large, $\Vert u_k\Vert_{Z(\mathbb{R}_+)}$ remains bounded. Once again, this contradicts \eqref{3S-Blow-upCrit}.
Let $U$ be the maximal nonlinear solution of \eqref{CubicFOS} with initial data $V$, defined on $I=(-T_\ast,T^\ast)$. Suppose, for example that $T^\ast=+\infty$, and that $\Vert U\Vert_{Z(\mathbb{R}_+)}<+\infty$. Then, using Proposition \ref{Not-S-StabProp2} on $\mathbb{R}_+$ with $v=U$, and $u=\tau_{(h_k^{-1},0,-y_k)}u_k$,
we see that $\Vert u_k\Vert_{Z(\mathbb{R}_+)}$ is bounded uniformly in $k$, which is a contradiction with \eqref{3S-Blow-upCrit}. Consequently, we have that
\begin{equation*}
\Vert U\Vert_{Z(0,T^\ast)}=\Vert U\Vert_{Z(-T_\ast,0)}=+\infty
\end{equation*}
and $E(U)=E_{max}$.
Now, we prove the compactness property of $U$.
In the sequel, we let $N_{min}>0$ be sufficiently small so that $\Vert u\Vert_{\dot{H}^2}\le N_{min}$ implies  $E(u)<E_{max}/4$.
Proceeding as above, it is easily proved by contradiction that for any $\varepsilon>0$, there exist $t_1,\dots,t_j$, $j=j(\varepsilon)$, such that for any time $t\in (-T_\ast,T^\ast)$, there exist $i=i(t)$, and $g(t)=g_{(h(t),y(t))}$ with the property that
$\Vert u(t_i)-g(t)u(t)\Vert_{\dot{H}^2}\le\varepsilon$.
Let us apply this with $\varepsilon=N_{min}$. We get a function $g(t)=g_{(h(t),y(t))}$, and a finite set of times $t_1,\dots, t_j$ such that for any $t$, there exists $i$ satisfying
\begin{equation*}
\Vert u(t_i)-g(t)u(t)\Vert_{\dot{H}^2}\le N_{min}.
\end{equation*}
We claim that $K=\{g(t)u(t):t\in (-T_\ast,T^\ast)\}$ is precompact in $\dot{H}^2$.
Suppose by contradiction that this is not true. Then, there exist $\varepsilon>0$, and a sequence $s_k$ such that for any $k$ and $p$,
\begin{equation}\label{3S-EqtCompactnessContrad}
\Vert g(s_k)u(s_k)-g(s_{p})u(s_{p})\Vert_{\dot{H}^2}>\varepsilon.
\end{equation}
According to what we said above, and passing to a subsequence, we can assume that there exist two times $\bar{t},\bar{t}^\prime$, and a sequence $g^\prime_k=g_{(h_k^\prime,y_k^\prime)}$ such that, for any $k$,
\begin{equation}\label{3S-EqtCompactness1}
\begin{split}
&\Vert u(\bar{t})-g(s_k)u(s_k)\Vert_{\dot{H}^2}<N_{min},\hskip.1cm\hbox{and}\\
&\Vert u(\bar{t}^\prime)-g^\prime_ku(s_k)\Vert_{\dot{H}^2}<\frac{\varepsilon}{4}.
\end{split}
\end{equation}
Passing to a subsequence, it is easily seen that
that $\left(h_k^\prime\right)^{-1}h(s_k)$ remains in a compact subset of $(0,\infty)$ and that and 
$y(s_k)-h(s_k)^{-1}h_k^\prime y_k^\prime$ remains in a compact subset of $\mathbb{R}^n$. Hence, up to considering a subsequence, we can find $g_\infty$ such that $ g(s_k)\left(g_k^\prime\right)^{-1}\to g_\infty$ strongly.
Now, using \eqref{3S-EqtCompactness1} and the fact that $g_{(h,y)}$ is an isometry on $\dot{H}^2$ for all $(h,y)$, we get that
\begin{equation*}\label{3S-EqtCompactness2}
\begin{split}
&\Vert g(s_k)u(s_k)-g(s_{k+1})u(s_{k+1})\Vert_{\dot{H}^2}\\
&\le \Vert g(s_k)u(s_k)-g_\infty u(\bar{t}^\prime)\Vert_{\dot{H}^2}+\Vert g_\infty u(\bar{t}^\prime)-g(s_{k+1})u(s_{k+1})\Vert_{\dot{H}^2}\\
&\le \Vert g^\prime_ku(s_k)-g^\prime_kg(s_k)^{-1}g_\infty u(\bar{t}^\prime)\Vert_{\dot{H}^2}+\Vert g^\prime_{k+1}u(s_{k+1})-g^\prime_{k+1}g(s_{k+1})^{-1}g_\infty u(\bar{t}^\prime)\Vert_{\dot{H}^2}\\
&\le \frac{\varepsilon}{2}+o(1).
\end{split}
\end{equation*}
Clearly, this contradicts \eqref{3S-EqtCompactnessContrad} and proves the compactness property of $K$.
The remaining part follows the line of the work in Tao, Visan and Zhang \cite{TaoVisZha} and Killip, Tao and Visan \cite{KilTaoVis}.
However, in order to obtain a low-to-high cascade (instead of a high-to-low cascade), we make the following slight modification. We use the notations in Killip, Tao and Visan \cite{KilTaoVis}, except for $h(t)=N(t)^{-1}$. In case $Osc(\kappa)$ is unbounded, instead of $a$, we introduce the quantity
\begin{equation*}\label{DefOfBFunction}
 b(t_0)=\inf\left(\frac{h(t_0)}{\inf_{t\ge t_0}h(t)},\frac{h(t_0)}{\inf_{t\le t_0}h(t)}\right).
\end{equation*}
Then, if $\sup_{t_0\in J}b(t_0)=+\infty$, we can find intervals on which the solution presents arbitrarily large relative peak. In particular it becomes possible to find a solution satisfying the low-to-high cascade scenario.
Finally, in case $\sup_{t_0\in J}b(t_0)<+\infty$, the solution has arbitrarily large oscillation, but no relative peak. Mimicking the proof in Killip, Tao and Visan \cite{KilTaoVis}, but changing future (resp past)-focusing time into future (resp past)-defocusing time, one can find a solution behaving as in the self-similar case scenario. Theorem \ref{3S-H2CritThm3S} follows.
\end{proof}

%%%%%%%%%%%%%%%%%%%%%%%%%%%%%%%%%%%%%%%%%%%%%%%%%%%%%%%%%%%%%%%%%%%%%%%%%%%%%%%%%%%%%%%%%%%%%%%%%%%%%%%%%%%%%%%%%%%%%%%%%%%%%%%%%%%%               Self Similar Case                  %%%%%%%%%%%%%%%%%%%%%%%%%%%%%%%%%%%%%%%%%%%%%%%%%%%%%%%%%%%%%%%%%%%%%%%%%%%%%%%%%%%%%%%%%%%%%%%%%%%%%%%%%%%%%

\section{The self-similar case}\label{Section-Self-Similar}

In this section, we deal with the easiest case in Theorem \ref{3S-H2CritThm3S}, namely, the self-similar-like solution. We prove that it is not consistent with conservation of the energy, compactness up to rescaling, and almost conservation of the local $L^2$-norm as expressed in \eqref{Not-AlmostConservationOfLocalMassEstimate}.
More precisely, we prove that the following proposition holds true.

\begin{proposition}\label{SS-NoSelfSimilarSolutionProp}
Let $u\in C(I,\dot{H}^2)$ be a maximal-lifespan solution such that $K=\{g(t)u(t):t\in I\}$ is precompact in $\dot{H}^2$ for some function $g$ as in \eqref{I-DefOfG}. If $n=8$, and $I\ne\mathbb{R}$, then $u=0$. In particular, the self-similar scenario in Theorem \ref{3S-H2CritThm3S} does not hold true.
\end{proposition}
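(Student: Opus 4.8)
The plan is to combine the almost-conservation of the local mass (Proposition \ref{Not-AlmostConservationOfMass}) with the concentration supplied by the precompactness of $K$, using in an essential way that $u$ has finite, conserved energy. By time reversal we may assume that $I=(T_\ast,T^\ast)$ has a finite left endpoint, and by time translation that $T_\ast=0$, so that $(0,T^\ast)\subset I$. The first ingredient is the standard fact that $h(t)\to 0$ as $t\to 0^+$: otherwise, along some sequence $t_k\to 0^+$ the scales $h(t_k)$ would stay bounded away from $0$ while $g(t_k)u(t_k)$ remains in a compact set, and then (undoing the symmetries and using the translation invariance of \eqref{CubicFOS}) one could pass to a limit and extend $u$ past $t=0$ by Proposition \ref{Not-BCC-ClaimOfLE}, contradicting maximality. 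In the self-similar scenario of Theorem \ref{3S-H2CritThm3S} this is immediate since $h(t)=t^{1/4}$.

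The heart of the proof is to show that for every $x_0\in\mathbb{R}^n$ and every $R>0$,
\begin{equation*}
 M\big(u(t),B(x_0,R)\big)\longrightarrow 0\qquad\text{as }t\to 0^+.
\end{equation*}
Set $v(t)=g(t)u(t)=g_{(h(t),x(t))}u(t)\in K$, so that $u(t,y)=h(t)^{-2}\,v(t)\big(y/h(t)+x(t)\big)$ pointwise. Given $\varepsilon>0$ and a large parameter $A$, split the ball $B(x_0,2R)$ (outside of which the integrand defining $M(u(t),B(x_0,R))$ vanishes) into the core $\mathcal{C}_t=B\big(-h(t)x(t),Ah(t)\big)$ and its complement. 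Since $n=8$, the change of variables $z=y/h(t)+x(t)$ gives $\int_{\mathcal{C}_t}|u(t)|^2\,dy=h(t)^{4}\|v(t)\|_{L^2(B(0,A))}^2$, which is $\lesssim_A h(t)^{4}$ because $K$ is bounded in $\dot{H}^2\hookrightarrow L^4$; this tends to $0$ as $t\to0^+$ since $h(t)\to0$. On the complement of $\mathcal{C}_t$ one has $|z|\ge A$, so Hölder's inequality together with the invariance of the $L^4$-norm under $g$ in dimension $8$ yields
\begin{equation*}
 \int_{B(x_0,2R)\setminus\mathcal{C}_t}|u(t)|^2\,dy\ \lesssim\ R^{4}\Big(\int_{|z|>A}|v(t)(z)|^4\,dz\Big)^{1/2},
\end{equation*}
and the precompactness of $K$ in $L^4$ makes the right-hand side $<\varepsilon$ once $A=A(\varepsilon,R)$ is large, uniformly in $t$. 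Fixing such an $A$ and letting $t\to0^+$ proves the claim.

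To finish, rewrite Proposition \ref{Not-AlmostConservationOfMass}, using that the energy is conserved, as
\begin{equation*}
 \big|\partial_t\big(M(u(t),B(x_0,R))^{3/4}\big)\big|\ \lesssim\ \frac{E(u)^{3/4}}{R},
\end{equation*}
so that $t\mapsto M(u(t),B(x_0,R))^{3/4}$ is Lipschitz with constant $\lesssim E(u)^{3/4}/R$. Integrating this between $s$ and $t$ (with $0<s<t<T^\ast$), letting $s\to 0^+$ and invoking the previous step gives
\begin{equation*}
 M\big(u(t),B(x_0,R)\big)^{3/4}\ \lesssim\ \frac{E(u)^{3/4}}{R}\,t\qquad\text{for every }t\in(0,T^\ast).
\end{equation*}
Since $M(u(t),B(x_0,R))\ge\int_{|x-x_0|\le R}|u(t,x)|^2\,dx$, fixing $t$ and letting $R\to+\infty$ forces $\int_{\mathbb{R}^n}|u(t,x)|^2\,dx=0$, hence $u(t)=0$; as $t\in(0,T^\ast)$ was arbitrary, $u\equiv 0$. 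Applied to Theorem \ref{3S-H2CritThm3S}, whose self-similar solution has energy $E_{max}>0$ and is therefore nonzero, this excludes the self-similar scenario. The main obstacle is the middle step: $\dot{H}^2$-solutions have no a priori control on their mass (which may in fact be infinite), so mass conservation is unavailable and one must instead extract the gain $h(t)^{n-4}=h(t)^{4}\to0$ from the subcritical scaling on the concentration-scale core while taming the spatial tail through the $L^4$-tightness of the precompact family $K$.
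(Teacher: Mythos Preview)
Your proof is correct and follows essentially the same route as the paper's: split the local mass near the blow-up time into a concentration core around $-h(t)x(t)$ and a tail controlled by the $L^4$-tightness of $K$, then propagate the resulting smallness forward via the almost-conservation of local mass and let $R\to\infty$. The only cosmetic differences are that you take a core of radius $Ah(t)$ (scaling with $t$) and two limits $A\to\infty$, $t\to0$, whereas the paper uses a fixed core radius $\delta$ and the limits $t\to0$, $\delta\to0$; you also spell out the standard argument that $h(t)\to 0$ at the finite endpoint, which the paper invokes implicitly when it sends $\epsilon(\delta/h(t))\to 0$.
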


\begin{proof}
Let $u\in C(I,\dot{H}^2)$ be a solution as above, with $I\ne\mathbb{R}$, and let $v(t)=g(t)u(t)$. Without loss of generality, we can assume that $\inf I=0$ and that $(0,2)\subset I$.
Fix $0<t<1$. First, using H\"older's inequality, we get that, for any $\delta>0$,
\begin{equation}\label{SS-Scenar3ProofEqt1}
\int_{B(-h(t)x(t),\delta)}\vert u(t,x)\vert^2dx\lesssim_{E_{max}}\delta^4.
\end{equation}
Independently, let $x_0\in\mathbb{R}^n$, $R>\delta>0$, $D=B(x_0,R)\setminus B(-h(t)x(t),\delta)$, and $D^\prime=B(x(t)+x_0/h(t),R/h(t))\setminus B(0,\delta/h(t))$. Using H\"older's inequality once again, we get that
\begin{equation}\label{SS-Scenar3ProofEqt2}
\begin{split}
\int_{D}\vert u(t,x)\vert^2dx&=h(t)^4\int_{D^\prime}\vert v(t,x)\vert^2dx\\
&\le h(t)^4\left(\int_{\vert x\vert\ge\frac{\delta}{h(t)}}\vert v(t,x)\vert^{4}dx\right)^\frac{1}{2}\left(\int_{B(\frac{x_0}{h(t)}+x(t),\frac{R}{h(t)})}dx\right)^\frac{1}{2}\\
&\lesssim \epsilon(\delta/h(t))^\frac{1}{2}R^4,
\end{split}
\end{equation}
where $\epsilon$ is given by
\begin{equation*}\label{SS-DefEpsilonR}
\epsilon(R)=\sup_{t\in I}\int_{\vert x\vert\ge R}\vert v(t,x)\vert^{4}dx.
\end{equation*}
A consequence of the compactness of $K$ as in Theorem \ref{3S-H2CritThm3S} is that \begin{equation}\label{SS-EpsilonDecays}
\epsilon(R)\to 0\hskip.1cm,\hskip.1cm\hbox{as}\hskip.1cm R\to +\infty.
\end{equation}
Combining \eqref{SS-Scenar3ProofEqt1} and \eqref{SS-Scenar3ProofEqt2}, we get that for any ball $B_R$ of radius $R>\delta$,
\begin{equation}\label{SS-Scenar3ProofEqt3}
\int_{B_R}\vert u(t,x)\vert^2dx\lesssim_{E_{max}} \delta^4+R^4\epsilon(\delta/h(t))^\frac{1}{2}.
\end{equation}
Using almost conservation of local mass, as expressed in \eqref{Not-AlmostConservationOfLocalMassEstimate}, and \eqref{SS-Scenar3ProofEqt3}, we get, for any $x_0\in\mathbb{R}^8$ and any $R>4$, that the following bound at time $1$ holds true
\begin{equation}\label{SS-Scenar3ProofEqt4}
\begin{split}
M\left(u(1),B(x_0,R)\right)^\frac{3}{4}&\lesssim_{E_{max}} \frac{1}{R}+M\left(u(t),B(x_0,2R)\right)^\frac{3}{4}\\
&\lesssim_{E_{max}} \frac{1}{R}+\left(\delta^4+R^4\epsilon(\delta/h(t))^\frac{1}{2}\right)^\frac{3}{4},
\end{split}
\end{equation}
where the local mass is as in \eqref{Not-DefLocalMass}.
Letting $t\to 0$ and using \eqref{SS-EpsilonDecays}, and then letting $\delta\to 0$, we get with \eqref{SS-Scenar3ProofEqt4} that
\begin{equation}\label{SS-Scenar3ProofEqt5}
M\left(u(1),B(x_0,R)\right)\lesssim_{E_{max}} R^{-\frac{4}{3}}.
\end{equation}
Letting $R\to \infty$ in \eqref{SS-Scenar3ProofEqt5}, we obtain
\begin{equation}\label{SS-Scenar3ProofEqt6}
\Vert u(1)\Vert_{L^2}=0.
\end{equation}
Clearly \eqref{SS-Scenar3ProofEqt6} contradicts $u\ne 0$. This proves Proposition \ref{SS-NoSelfSimilarSolutionProp}.
\end{proof}

%%%%%%%%%%%%%%%%%%%%%%%%%%%%%%%%%%%%%%%%%%%%%%%%%%%%%%%%%%%%%%%%%%%%%%%%%%%%%%%%%%%%%%%%%%%%%%%%%%%%%%%%%%%%%%%%%%%%%%%%%%%%%%%%%%%%%%%%%               Interaction Morawetz Estimate                            %%%%%%%%%%%%%%%%%%%%%%%%%%%%%%%%%%%%%%%%%%%%%%%%%%%%%%%%%%%%%%%%%%%%%%%%%%%%%%%%%%%%%%%%%%%%%%%%%%%%%%%%%%%%%

\section{An interaction Morawetz estimate}\label{Section-IME}

To deal with the remaining two scenarii in Theorem \ref{3S-H2CritThm3S}, in which there is no prescribed finite-time blow-up, we need a new ingredient that bounds the amount of nonlinear presence of the solution at a given scale. Natural candidates to achieve this are Morawetz estimates and in our case, interaction Morawetz estimates. In light of Theorem \ref{3S-H2CritThm3S}, we need to work exclusively with $\dot{H}^2$-solutions. Interaction Morawetz estimates scale like the $\dot{H}^\frac{1}{4}$-norm.
Because of this $7/4$-differrence in scaling, following Colliander, Keel, Staffilani, Takaoka and Tao \cite{ColKeeStaTakTao}, Ryckman and Visan \cite{RycVis} and Visan \cite{Vis}, we seek for frequency-localized interaction Morawetz estimates. This is the purpose of Sections \ref{Section-IME} and \ref{Section-FLIME}. In Section \ref{Section-IME} we derive an a priori interaction estimate that applies to all solutions $u\in C(H^2)$, and in Section \ref{Section-FLIME} we use it to obtain a frequency-localized version of these estimates. The frequency localized version applies only to the special $\dot{H}^2$-solutions given by Theorem \ref{3S-H2CritThm3S}. We prove here that the following proposition holds true.

\begin{proposition}\label{FLIME-InteractionMor1}
Let $n\ge 7$ and let $u\in C([T_1,T_2],H^2)$ be a solution of \eqref{Not-LinSol}, with forcing term $h\in \dot{\bar{S}}^2([T_1,T_2])+\dot{\bar{S}}^0([T_1,T_2])$. Then the following estimate holds true:
\begin{equation}\label{FLIME-IntMorEst1}
\begin{split}
&\sum_{j=1}^n\int_{T_1}^{T_2}\int_{\mathbb{R}^{2n}}\{h,u\}_m(t,y)\frac{(x-y)_j}{\vert x-y\vert}\{\partial_ju,u\}_m(t,x) dxdydt\\
&+\sum_{j=1}^n\int_{T_1}^{T_2}\int_{\mathbb{R}^{2n}}\vert u(t,y)\vert^2\frac{\left(x-y\right)_j}{\vert x-y\vert}\{h,u\}_p^j(t,x)dxdydt\\
&+\int_{T_1}^{T_2}\int_{\mathbb{R}^{2n}}\frac{\vert u(t,x)\vert^2\vert u(t,y)\vert^2}{\vert x-y\vert^5}dxdydt\lesssim \sup_{t=T_1,T_2}\Vert u(t)\Vert_{L^2}^2\Vert u(t)\Vert_{\dot{H}^\frac{1}{2}}^2,\\
\end{split}
\end{equation}
where $\{,\}_m$ and $\{,\}_p$ are the mass and momentum brackets.
\end{proposition}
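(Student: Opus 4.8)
The plan is to derive \eqref{FLIME-IntMorEst1} as a differential identity for a suitably chosen interaction Morawetz functional, integrated over $[T_1,T_2]$. Following \cite{ColKeeStaTakTao} and its higher-order adaptations in \cite{RycVis,Vis}, I would introduce the two-particle quantity
\begin{equation*}
\mathcal{M}(t)=\int_{\mathbb{R}^{2n}}\vert u(t,y)\vert^2\, a(x-y)\cdot\{\nabla u,u\}_m(t,x)\,dx\,dy,
\end{equation*}
where $a(z)=z/\vert z\vert$ is the gradient of the weight $\vert z\vert$, and $\{\cdot,\cdot\}_m$, $\{\cdot,\cdot\}_p$ are the mass and momentum brackets associated to the equation $i\partial_t u+\Delta^2 u+h=0$. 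The first step is a boundedness bound: since $a$ is bounded, Cauchy--Schwarz gives $\vert\mathcal{M}(t)\vert\lesssim\Vert u(t)\Vert_{L^2}^2\,\Vert u(t)\Vert_{\dot H^{1/2}}^2$, which produces the right-hand side of \eqref{FLIME-IntMorEst1} after integrating $\partial_t\mathcal{M}$ over the interval. (Here one uses that $\{\nabla u,u\}_m$ pairs a derivative on each factor, costing $\dot H^{1/2}$ rather than $\dot H^1$ — this is exactly the "scales like $\dot H^{1/4}$" feature mentioned in the text.)

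The second and main step is to compute $\partial_t\mathcal{M}(t)$ using the equation. Differentiating in $t$ and substituting $i\partial_t u=-\Delta^2 u-h$, the terms split into (i) a contribution from the free biharmonic flow, (ii) the nonlinear/forcing contribution involving $h$, and (iii) the self-interaction of the quartic-type term $\vert u\vert^2$ against the momentum density. For a fourth-order operator the free part of the virial computation is substantially more involved than in the Schrödinger case: differentiating $\Delta^2$ through the weight $a(x-y)$ produces, after repeated integration by parts, a positive quadratic form in the derivatives of $u$ controlled below by the term $\int\vert u(t,x)\vert^2\vert u(t,y)\vert^2/\vert x-y\vert^5\,dx\,dy$ — this is where the weight $\vert z\vert^{-5}$ and the restriction $n\ge 7$ enter, since the relevant distributional derivatives of $\vert z\vert$ (the Hessian, and higher, of the Morawetz weight) must have the correct sign and integrability in dimension $n$. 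I would organize this computation by first treating a single particle (a Morawetz identity with a general convex weight), keeping careful track of all error terms, and then tensoring with the mass density $\vert u(t,y)\vert^2$ and symmetrizing in $x\leftrightarrow y$ so that the antisymmetric remainders cancel.

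The third step is to isolate the $h$-dependent terms. The time derivative of $\mathcal{M}$ forces the appearance of the mass bracket $\{h,u\}_m$ (from $\partial_t\vert u\vert^2$ and from $\partial_t$ hitting the momentum density) and the momentum bracket $\{h,u\}_p^j$ (from $\partial_t$ of $\{\partial_j u,u\}_m$), weighted against $a(x-y)$ exactly as written in the first two lines of \eqref{FLIME-IntMorEst1}. These are kept on the left-hand side rather than estimated, since — as the text emphasizes — this is not meant to be an a priori estimate closed in terms of $h$; the forcing term will later be chosen as a frequency projection of the nonlinearity in Section~\ref{Section-FLIME}. So the identity to establish is really
\begin{equation*}
\partial_t\mathcal{M}(t)=\big(\text{the three displayed left-hand integrands at time }t\big)+(\text{terms that vanish on symmetrization}),
\end{equation*}
and then \eqref{FLIME-IntMorEst1} follows by the fundamental theorem of calculus together with the step-one bound on $\mathcal{M}(T_1)$ and $\mathcal{M}(T_2)$.

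The hard part will be step two: verifying that the free-flow part of $\partial_t\mathcal{M}$ genuinely dominates $\int\vert u(x)\vert^2\vert u(y)\vert^2\vert x-y\vert^{-5}\,dx\,dy$ with a positive constant. Unlike the second-order case, where the analogous bilinear form reduces to the manifestly nonnegative $-\Delta\vert z\vert=-(n-1)/\vert z\vert\le 0$ paired against $\vert u\vert^2$, here one must expand $\Delta^2$ acting through the weight and confront fourth-order derivatives of $\vert z\vert$, whose positivity is delicate and dimension-dependent — this is precisely why the hypothesis degrades to $n\ge 7$ and why one only recovers a $\dot H^{1/4}$-scaling estimate. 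Managing the integration-by-parts bookkeeping so that every indefinite-sign term is either absorbed into the positive part or cancelled by the $x\leftrightarrow y$ symmetrization, while retaining enough of the positive quartic term, is the crux of the argument; all other steps are bookkeeping applications of Cauchy--Schwarz and Sobolev embedding.
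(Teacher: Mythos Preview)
Your overall framework---define the interaction Morawetz action $\mathcal{M}(t)=\int\vert u(y)\vert^2 a_j(x-y)\,\hbox{Im}(\bar u\partial_j u)(x)\,dx\,dy$, bound it by $\Vert u\Vert_{L^2}^2\Vert u\Vert_{\dot H^{1/2}}^2$, differentiate in time, and integrate---matches the paper exactly, as does your identification of the $h$-bracket terms. The gap is in your mechanism for step two. You propose that the indefinite-sign remainders from the free biharmonic flow are ``cancelled by the $x\leftrightarrow y$ symmetrization,'' but they are not. When $\partial_t$ hits $\vert u(y)\vert^2$, the fourth-order equation produces, after one integration by parts, the terms
\[
\int\hbox{Im}\bigl(\bar u(y)\partial_j u(y)\bigr)\,\partial^y_j\Delta\partial^x_k a\;\hbox{Im}\bigl(\partial_k u(x)\bar u(x)\bigr)
\quad\text{and}\quad
\int\hbox{Im}\bigl(\partial_i u(y)\partial_{ij}\bar u(y)\bigr)\,\partial^y_j\partial^x_k a\;\hbox{Im}\bigl(\partial_k u(x)\bar u(x)\bigr),
\]
and these survive symmetrization with no sign. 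The paper handles them by introducing the bilinear form
\[
Q\bigl((\vec\alpha\otimes\beta);(\vec\gamma\otimes\delta)\bigr)=\hbox{Re}\int\alpha_k(x)\bar\delta(x)\,\frac{1}{\vert z\vert}\Bigl(\delta_{jk}-\frac{z_jz_k}{\vert z\vert^2}\Bigr)\,\bar\gamma_j(y)\beta(y)\,dx\,dy,
\]
verifying (via the Fourier transform of its kernel) that $Q$ is nonnegative, and then applying Cauchy--Schwarz \emph{with respect to $Q$} to the cross term $Q\bigl((\nabla\partial_i u\otimes u);(\nabla u\otimes\partial_i u)\bigr)$. An algebraic identity relating $Q((\nabla u\otimes\partial_i u)^2)$ back to $Q((\nabla\partial_i u\otimes u)^2)$ plus curvature terms, followed by a second Cauchy--Schwarz bounding $Q((\nabla\partial_i u\otimes u)^2)$ by $\int\vert u(x)\vert^2\vert z\vert^{-1}\vert\nabla^\perp_{z}\partial_i u(y)\vert^2$, finally absorbs everything into the Hessian term $\int\vert u(y)\vert^2\partial_{jk}a\,\partial_{ik}u\,\partial_{ij}\bar u(x)$ already present with the good sign. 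So the result is an \emph{inequality} $\partial_t\mathcal{M}\le\cdots$, not the identity you wrote.

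The dimensional restriction $n\ge 7$ does not come from integrability of derivatives of $\vert z\vert$ but from the very last step: after the above absorptions, the two surviving gradient terms combine into $-\int\vert u(y)\vert^2\vert z\vert^{-3}\bigl((n-1)\delta_{jk}-6z_jz_k/\vert z\vert^2\bigr)\partial_j u\,\partial_k\bar u(x)$, and this matrix is nonnegative precisely when $n-1\ge 6$.
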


In this proposition, the mass and momentum brackets are defined by 
\begin{equation}\label{FLIME-MassBracket}
\begin{split}
\{f,g\}_m=\hbox{Im}(f\bar{g})\hskip.1cm,\hskip.1cm\hbox{and},\{f,g\}_p=\hbox{Re}(f\nabla\bar{g}-g\nabla\bar{f}).\\
\end{split}
\end{equation}
In addition to Proposition \ref{FLIME-InteractionMor1}, in order to exploit the bound given in \eqref{FLIME-IntMorEst1}, we also prove that the following lemma holds true.

\begin{lemma}\label{FLIME-SQRTEstimateLemma}
 Assume $n\ge 6$. Then
\begin{equation}\label{FLIME-SQRTEstimate}
\Vert \vert\nabla\vert^{-\frac{n-5}{4}} u\Vert_{L^4}\simeq \Vert (\sum_NN^{-\frac{n-5}{2}}\vert P_Nu\vert^2)^\frac{1}{2}\Vert_{L^4}\lesssim \Vert \vert\nabla\vert^{-\frac{n-5}{2}}\vert u\vert^2\Vert_{L^2}^\frac{1}{2},
\end{equation}
for all $u\in \dot{H}^2$ such that $\vert\nabla\vert^{-\frac{3}{2}}\vert u\vert^2\in L^2$, where the summation is over all dyadic numbers.
\end{lemma}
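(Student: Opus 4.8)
The plan is as follows. Write $\alpha=\tfrac{n-5}{2}$, so that the exponent appearing in the lemma is $\tfrac{n-5}{4}=\alpha/2$; since $n\ge 6$ one has $0<\alpha<n$ and $n-\alpha=\tfrac{n+5}{2}>0$, and these two facts are the only places where the dimensional restriction intervenes. The first equivalence in \eqref{FLIME-SQRTEstimate} is just the Littlewood--Paley/Riesz-potential characterization of $\|\,|\nabla|^{-\alpha/2}u\|_{L^4}$: restricted to a dyadic annulus $|\xi|\sim N$, the symbol $|\xi|^{-\alpha/2}$ equals $N^{-\alpha/2}$ times a fixed smooth compactly supported symbol in $\xi/N$, so combining the vector-valued Littlewood--Paley inequality with the Fefferman--Stein maximal inequality (both on $L^4$) gives $\|(\sum_N N^{-\alpha}|P_Nu|^2)^{1/2}\|_{L^4}\simeq\|\,|\nabla|^{-\alpha/2}u\|_{L^4}$, and I would simply cite \cite{TaoBook} for this. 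Thus the substance of the lemma is the inequality, which upon squaring becomes $\big\|\sum_N N^{-\alpha}|P_Nu|^2\big\|_{L^2}\lesssim\big\|\,|\nabla|^{-\alpha}(|u|^2)\big\|_{L^2}$.

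I would prove this by first establishing the stronger pointwise estimate $\sum_N N^{-\alpha}|P_Nu(x)|^2\lesssim\big(|\nabla|^{-\alpha}|u|^2\big)(x)$ for a.e.\ $x$, and then taking $L^2$ norms in $x$. Let $K$ denote the Schwartz convolution kernel of $P_1$, so that $P_N$ acts by convolution with $K_N(z)=N^nK(Nz)$, where $|K_N(z)|\lesssim_M N^n(1+N|z|)^{-M}$ for every $M$. Cauchy--Schwarz in the $y$-variable gives
\begin{equation*}
|P_Nu(x)|^2=\Big|\int K_N(x-y)u(y)\,dy\Big|^2\le\|K_N\|_{L^1}\int|K_N(x-y)|\,|u(y)|^2\,dy\lesssim\int|K_N(x-y)|\,|u(y)|^2\,dy,
\end{equation*}
whence $\sum_N N^{-\alpha}|P_Nu(x)|^2\lesssim\int\big(\sum_N N^{n-\alpha}(1+N|x-y|)^{-M}\big)|u(y)|^2\,dy$.

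It then remains to sum the kernel. For fixed $r>0$ I would split the dyadic sum at $N\sim 1/r$: on $N\lesssim 1/r$ it is geometric with ratio $2^{n-\alpha}>1$, hence $\lesssim r^{-(n-\alpha)}$ --- this is precisely where $n-\alpha>0$ is used --- while on $N\gtrsim 1/r$, after choosing $M>n-\alpha$, it is geometric with ratio $2^{n-\alpha-M}<1$, hence also $\lesssim r^{-M}\cdot r^{M-(n-\alpha)}=r^{-(n-\alpha)}$. Therefore $\sum_N N^{n-\alpha}(1+N|z|)^{-M}\lesssim|z|^{-(n-\alpha)}$, and since $0<\alpha<n$ the operator $|\nabla|^{-\alpha}$ is convolution against the positive locally integrable kernel $c_{n,\alpha}|z|^{-(n-\alpha)}$; this yields the pointwise bound, and then taking $L^2$ norms and square roots gives \eqref{FLIME-SQRTEstimate}. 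I do not expect a genuine obstacle here: the argument rests only on the rapid decay of the Littlewood--Paley kernels together with the positivity of the Riesz kernel. The one point that needs a little care is the bookkeeping of the two regimes $N\lesssim|x-y|^{-1}$ and $N\gtrsim|x-y|^{-1}$ in the kernel sum, and the related observation that for $n\ge6$ we stay in the range $0<\alpha<n$ where $|\nabla|^{-\alpha}$ is a bona fide (positive, locally integrable) Riesz potential.
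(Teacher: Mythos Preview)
Your proof is correct and follows essentially the same route as the paper's: both arguments establish the pointwise bound $\sum_N N^{-\alpha}|P_Nu(x)|^2\lesssim(|\nabla|^{-\alpha}|u|^2)(x)$ by applying Cauchy--Schwarz to the convolution defining $P_Nu$ and then summing the resulting kernels over dyadic $N$ using Schwartz decay, recognizing the result as the Riesz kernel $|z|^{-(n-\alpha)}$. The only cosmetic difference is that the paper folds the factor $|\xi|^{-\alpha/2}$ into a modified Littlewood--Paley symbol $\phi(\xi)=|\xi|^{-\alpha/2}(\psi(\xi)-\psi(2\xi))$ before doing the Cauchy--Schwarz step, whereas you keep $P_N$ and the weight $N^{-\alpha/2}$ separate; since $|\xi|\sim N$ on the support of $P_N$, the two computations are equivalent.
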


\begin{proof}
The equivalence of norms is classical.
 We first claim that for any $g\in \mathcal{S}$, and any $n\ge 6$,
\begin{equation}\label{CorToMorProofEqt1}
\Vert \vert \nabla\vert^{-\frac{n-5}{4}}g\Vert_{L^4}\lesssim \Vert \vert\nabla\vert^{-\frac{n-5}{2}}\vert g\vert^2\Vert_{L^2}^\frac{1}{2}.
\end{equation}
We prove \eqref{CorToMorProofEqt1}.
Let $\phi(\xi)=\vert \xi\vert^{-\frac{n-5}{4}}\left(\psi(\xi)-\psi(2\xi)\right)$ where $\psi$ is as in \eqref{DefLitPalOp}. Using the Cauchy-Schwartz inequality we get that for any dyadic $N$,
\begin{equation}\label{CorToMorProofEqt12}
\begin{split}
&\left(P_N\vert \nabla\vert^{-\frac{n-5}{4}}g\right)(x)\\
&=N^{-\frac{n-5}{4}}\left(g\ast\mathcal{F}^{-1}\left(\phi(\xi/N)\right)\right)(x)\\
&=N^\frac{3n+5}{4}\int_{\mathbb{R}^n}g(x-y)\check{\phi}(Ny)dy\\
&\le  N^\frac{3n+5}{4}\left(\int_{\mathbb{R}^n}\vert g(x-y)\vert^2\vert\check{\phi}(Ny)\vert dy\right)^\frac{1}{2}\left(\int_{\mathbb{R}^n}\vert\check{\phi}(Ny)\vert dy\right)^\frac{1}{2}\\
&\lesssim N^{\frac{n+5}{4}}\left(\int_{\mathbb{R}^n}\vert g(x-y)\vert^2\vert\check{\phi}(Ny)\vert dy\right)^\frac{1}{2}
\end{split}
\end{equation}
uniformly in $N$.
Since $\phi\in\mathcal{S}$, for any $y\in\mathbb{R}^n$, we get
\begin{equation}\label{CorToMorProofEqt13}
\begin{split}
\sum_{N}\left(N\vert y\vert\right)^\frac{n+5}{2}\vert\check{\phi}(Ny)\vert\lesssim \sum_N \left(N\vert y\vert\right)^\frac{n+5}{2}\left(1+N\vert y\vert\right)^{-2n}\lesssim 1,
\end{split}
\end{equation}
where the summation is over all dyadic numbers $N$.
Consequently, using \eqref{CorToMorProofEqt12}, \eqref{CorToMorProofEqt13} and the fact that $\check{\phi}\in\mathcal{S}$, we get that
\begin{equation}\label{CorToMorProofEqt14}
\begin{split}
\sum_N\vert P_N\vert \nabla\vert^{-\frac{n-5}{4}}g\vert^2(x)&\lesssim\sum_N N^\frac{n+5}{2}\int_{\mathbb{R}^n}\vert g(x-y)\vert^2\vert \check{\phi}(Ny)\vert dy\\
&\lesssim\int_{\mathbb{R}^n}\frac{\vert g(x-y)\vert^2}{\vert y\vert^\frac{n+5}{2}}\left(\sum_N\left(N\vert y\vert\right)^\frac{n+5}{2}\vert\check{\phi}(Ny)\vert\right)dy\\
&\lesssim \left(\vert\nabla\vert^{-\frac{n-5}{2}}\vert g\vert^2\right)(x),
\end{split}
\end{equation}
and using the Littlewood-Paley Theorem, \eqref{CorToMorProofEqt14} gives \eqref{CorToMorProofEqt1} for $g$ smooth. Density arguments then give \eqref{FLIME-SQRTEstimate}. This ends the proof of Lemma \ref{FLIME-SQRTEstimateLemma}.
\end{proof}

\begin{proof}[Proof of Proposition \ref{FLIME-InteractionMor1}]
Since the estimate we want to prove is linear, we can assume that $u$ is smooth and use density arguments to recover the general case.
We adopt the convention that repeated indices are summed.
Given some real function $a$, we define the Morawetz action centered at $0$ by
\begin{equation}\label{CL-M-MorawetzAction0}
M_a^0(t)=2\int_{\mathbb{R}^n}\partial_ja(x)\hbox{Im}(\bar{u}(t,x)\partial_ju(t,x))dx.
\end{equation}
Following the computation in Pausader \cite{Pau1}, we get that
\begin{equation}\label{CL-M-TimeDerivativeOFMorawetzAction0aQuelconque}
\begin{split}
\partial_tM_a^0(t)=&2\int_{\mathbb{R}^n}\Big(2\partial_ju\partial_k\bar{u}\partial_{jk}\Delta
a-\frac{1}{2}\left(\Delta^3a\right)\vert
u\vert^2-4\partial_{jk}a\partial_{ik}u\partial_{ij}\bar{u}\\
&+\Delta^2a\vert\nabla u\vert^2+\partial_ja\{u,h\}_p^j\Big)dx.
\end{split}
\end{equation}
Similarly, we define the Morawetz action centered at $y$, $M^y_a(t)=M_{a_y}^0(t)$ for $a_y(x)=\vert x-y\vert$. Finally, we define the interaction Morawetz action by the following formula:
\begin{equation}\label{CL-M-InteractionMorawetzDef}
\begin{split}
M^i(t)&=\int_{\mathbb{R}^n}\vert u(t,y)\vert^2M_a^y(t)dy\\
&=2\hbox{Im}\left(\int_{\mathbb{R}^n}\int_{\mathbb{R}^n}\vert u(t,y)\vert^2\frac{x-y}{\vert x-y\vert}\nabla u(t,x)\bar{u}(t,x)dxdy\right).
\end{split}
\end{equation}
We can directly estimate
\begin{equation}\label{CL-M-EstimationOfIntercationMorawetzNorm}
\vert M^i(t)\vert\le \Vert u\Vert_{L^\infty L^2}^2\Vert u\Vert_{L^\infty\dot{H}^\frac{1}{2}}^2.
\end{equation}
Now, we get an estimate on the variation of $M^i$ by writing that
\begin{equation}\label{CL-M-DerivativeOfInteractionMorawetz}
\begin{split}
\partial_tM^i= &2\int_{\mathbb{R}^n}\{u,h\}_m(y)M^y_ady+4\hbox{Im}\int_{\mathbb{R}^n}\partial_ju(y)\partial_{jk}\bar{u}(y)\partial_kM^y_ady\\
&+2\hbox{Im}\left(\int_{\mathbb{R}^n}\bar{u}(y)\nabla u(y)\nabla\Delta M_a^ydy\right)+\int_{\mathbb{R}^n}\vert u(y)\vert^2\partial_tM^y_ady.
\end{split}
\end{equation}
This gives that
\begin{equation}\label{IntMorEqt1}
 \begin{split}
  &\partial_tM^i=\\
  &4\int_{\mathbb{R}^n\times\mathbb{R}^n}\hbox{Im}\left(\bar{u}(y)\partial_{j}u(y)\right)
\partial_{j}^y\Delta\left(\partial_{k}^xa (x-y)\right)\hbox{Im}\left(\partial_ku(x)\bar{u}(x)\right)dxdy\\
&+8\int_{\mathbb{R}^n\times\mathbb{R}^n}\hbox{Im}\left(\partial_iu(y)\partial_{ij}\bar{u}(y)\right)\partial_{j}^y\left(\partial_{k}^xa(x-y)\right)\hbox{Im}\left(\partial_ku(x)\bar{u}(x)\right)dxdy\\
&+4\int_{\mathbb{R}^n\times\mathbb{R}^n}\{u,h\}_m(y)\partial_{k}^xa(x-y)\hbox{Im}\left(\partial_ku(x)\bar{u}(x)\right)dxdy\\
&+4\int_{\mathbb{R}^n\times\mathbb{R}^n}\vert u(y)\vert^2\partial_{jk}^x\left(\Delta a(x-y)\right)\partial_ju(x)\partial_k\bar{u}(x)dxdy\\
&-\int_{\mathbb{R}^n\times\mathbb{R}^n}\vert u(y)\vert^2\left(\Delta^3a(x-y)\right)\vert u(x)\vert^2dxdy\\
&-8\int_{\mathbb{R}^n\times\mathbb{R}^n}\vert u(y)\vert^2\left(\partial_{jk}^xa(x-y)\right)\partial_{ik}u(x)\partial_{ij}\bar{u}(x)dxdy\\
&+2\int_{\mathbb{R}^n\times\mathbb{R}^n}\vert u(y)\vert^2\left(\Delta^2a(x-y)\right)\vert\nabla u(x)\vert^2dxdy\\
&+2\int_{\mathbb{R}^n\times\mathbb{R}^n}\vert u(y)\vert^2\partial_{j}^xa(x-y)\{u,h\}_p^j(x)dxdy,
\end{split}
\end{equation}
where $\partial_j^x$ denotes derivation with respect to $x_j$, and $\partial_k^y$ derivation with respect to $y_k$.
Most of the terms in \eqref{IntMorEqt1} have the right sign if we let $a(z)=\vert z\vert$. Now we focus on the first two terms in \eqref{IntMorEqt1}.
In the sequel, we let $z=x-y$.
Using the fact that $\hbox{Re}\left(AB\right)=\hbox{Re}\left(A\right)\hbox{Re}\left(B\right)-\hbox{Im}\left(A\right)\hbox{Im}\left(B\right)$, we get the equality:
\begin{equation}\label{BadTerm1}
\begin{split}
&\int_{\mathbb{R}^{2n}}\hbox{Im}\left(\bar{u}(y)\partial_ju(y)\right)\left(\partial_j^y\partial_k^x\Delta a(z)\right)\hbox{Im}\left(\partial_ku(x)\bar{u}(x)\right)dxdy\\
&=-\frac{1}{4}\int_{\mathbb{R}^{2n}}\vert u(y)\vert^2\Delta^3 a(z)\vert u(x)\vert^2dxdy-R((\nabla u\otimes u);(\nabla u\otimes u)),
\end{split}
\end{equation}
where we let $R$ be the bilinear form on $\mathcal{S}(\mathbb{R}^{n},\mathbb{R}^{n})\otimes \mathcal{S}(\mathbb{R}^n,\mathbb{R})$ defined by
\begin{equation}\label{DefOfR}
R((\vec{\alpha}\otimes\beta);(\vec{\gamma}\otimes\delta))=\hbox{Re}\int_{\mathbb{R}^{2n}}\alpha_j(y)\bar{\delta}(y)\left(\partial_{jk}^x\Delta a(z)\right)\bar{\gamma}_k(x)\beta(x)dxdy.
\end{equation}
For the second term, we proceed as follows:
\begin{equation}\label{WWW}
\begin{split}
&\int_{\mathbb{R}^{2n}}\hbox{Im}\left(\partial_{ij}\bar{u}(x)\partial_iu(x)\right)\left(\partial^x_{j}\partial^y_{k}a(z)\right)\hbox{Im}\left(\partial_ku(y)\bar{u}(y)\right)dxdy\\
&=\frac{1}{4}\int_{\mathbb{R}^n}\vert \nabla u(x)\vert^2\Delta^2a(z)\vert u(y)\vert^2dxdy+Q((\nabla\partial_iu\otimes u);(\nabla u\otimes\partial_iu)),
\end{split}
\end{equation}
where we define the quadratic form $Q$ on $\mathcal{S}(\mathbb{R}^n,\mathbb{R}^n)\otimes \mathcal{S}(\mathbb{R}^n,\mathbb{R})$ by
\begin{equation}\label{QuadraticForms}
\begin{split}
Q\left((\vec{\alpha}\otimes \beta);(\vec{\gamma}\otimes\delta)\right)&=\hbox{Re}\int_{\mathbb{R}^{2n}}\alpha_k(x)\bar{\delta}(x)\frac{1}{\vert z\vert}\left(\delta_{jk}-\frac{z_jz_k}{\vert z\vert^2}\right)\bar{\gamma}_j(y)\beta(y)dydx.\\
\end{split}
\end{equation}
As one can check by computing the Fourier transform of its kernel, $Q$ is nonnegative.
Hence, applying the Cauchy-Schwartz inequality, we get
\begin{equation}\label{CauchySchwartz}
\begin{split}
\vert Q((\nabla\partial_iu\otimes u);(\nabla u\otimes \partial_iu))\vert&\le \vert Q((\nabla \partial_iu\otimes u)^2)\vert^\frac{1}{2}\vert Q((\nabla u\otimes \partial_iu)^2)\vert^\frac{1}{2}\\
&\le \frac{1}{2}Q((\nabla \partial_iu\otimes u)^2)+ \frac{1}{2}Q((\nabla u\otimes\partial_iu)^2)
\end{split}
\end{equation}
and if $R$ and $Q$ are as in \eqref{DefOfR} and \eqref{QuadraticForms}, we observe that
\begin{equation}\label{EqtAfterCS}
\begin{split}
Q((\nabla u\otimes \partial_iu)^2)
&=Q((\nabla\partial_iu\otimes u)^2)-R\left((\nabla u\otimes u)^2\right)\\
&+2\hbox{Re}\int_{\mathbb{R}^{2n}}\partial_ku(x)\bar{u}(x)\left(\partial_{ijk}^x a(z)\right)\partial_{ij}\bar{u}(y)u(y)dxdy\\
&=Q((\nabla\partial_iu\otimes u)^2)+R((\nabla u\otimes u)^2)\\
&+\hbox{Re}\int_{\mathbb{R}^{2n}}\vert u(x)\vert^2\left(\partial_{ij}^x\Delta a(z)\right)\partial_i\bar{u}(y)\partial_ju(y)dxdy.
\end{split}
\end{equation}
Consequently, applying \eqref{BadTerm1}, \eqref{WWW}, \eqref{CauchySchwartz} and \eqref{EqtAfterCS}, we get that
\begin{equation}\label{BadTermsEstim}
\begin{split}
&4\int_{\mathbb{R}^n\times\mathbb{R}^n}\hbox{Im}\left(\bar{u}(y)\partial_{j}u(y)\right)
\partial_{j}^y\Delta\left(\partial_{k}^xa (x-y)\right)\hbox{Im}\left(\partial_ku(x)\bar{u}(x)\right)dxdy\\
&+8\int_{\mathbb{R}^n\times\mathbb{R}^n}\hbox{Im}\left(\partial_iu(y)\partial_{ij}\bar{u}(y)\right)\partial_{j}^y\left(\partial_{k}^xa(x-y)\right)\hbox{Im}\left(\partial_ku(x)\bar{u}(x)\right)dxdy\\
&\le -\int_{\mathbb{R}^{2n}}\vert u(y)\vert^2\left(\Delta^3a(z)\right)\vert u(x)\vert^2dxdy+8Q\left((\nabla\partial_iu\otimes u)^2\right)\\
&+2\int_{\mathbb{R}^{2n}}\vert u(y)\vert^2\left(\Delta^2a(z)\right)\vert\nabla u(x)\vert^2dxdy\\
&+4\hbox{Re}\int_{\mathbb{R}^{2n}}\vert u(x)\vert^2\left(\partial_{ij}^x\Delta a(z)\right)\partial_i\bar{u}(y)\partial_ju(y)dxdy.
\end{split}
\end{equation}
Now, for $e\in\mathbb{R}^n$ a vector, and $u$ a function, we define
\begin{equation*}\label{DefVectorDerivatives}
\begin{split}
\nabla_eu=\left(e\cdot\nabla u\right)\frac{e}{\vert e\vert^2}\hskip.1cm,\hskip.1cm\hbox{and},
\nabla_e^\perp u=\nabla u-\nabla_eu.
\end{split}
\end{equation*}
Then, applying the Cauchy-Schwartz inequality, we get that
\begin{equation}\label{EstimForQ}
\begin{split}
&Q((\nabla\partial_iu,u)^2)\\
&=\int_{\mathbb{R}^{2n}}\partial_{ij}\bar{u}(x)u(x)\frac{1}{\vert x-y\vert}\left(\delta_{jk}-\frac{(x-y)_j(x-y)_k}{\vert x-y\vert^2}\right)\partial_{ik}u(y)\bar{u}(y)dxdy\\
&=\int_{\mathbb{R}^{2n}}\frac{\left[u(x)\nabla_{x-y}^\perp \partial_iu(y)\right]\cdot\left[\nabla_{x-y}^\perp \partial_i\bar{u}(x)\bar{u}(y)\right]}{\vert x-y\vert}dxdy\\
&\le\int_{\mathbb{R}^{2n}}\vert u(x)\vert^2\frac{1}{\vert x-y\vert}\vert \nabla_{x-y}^\perp\partial_i u(y)\vert^2dxdy\\
&\le\int_{\mathbb{R}^{2n}}\vert u(x)\vert^2\frac{1}{\vert x-y\vert}\left(\delta_{jk}-\frac{(x-y)_j(x-y)_j}{\vert x-y\vert^2}\right)\partial_{ik}\bar{u}(y)\partial_{ij}u(y)dxdy.
\end{split}
\end{equation}
Finally, \eqref{IntMorEqt1}, \eqref{BadTermsEstim}, and \eqref{EstimForQ} give
\begin{equation}\label{IntMorEqt2}
\begin{split}
\partial_tM^i\le
&-2\int_{\mathbb{R}^n\times\mathbb{R}^n}\vert u(y)\vert^2\left(\Delta^3a(x-y)\right)\vert u(x)\vert^2dxdy\\
&+4\int_{\mathbb{R}^n\times\mathbb{R}^n}\{u,h\}_m(y)\partial_{k}^xa(x-y)\hbox{Im}\left(\partial_ku(x)\bar{u}(x)\right)dxdy\\
&+2\int_{\mathbb{R}^n\times\mathbb{R}^n}\vert u(y)\vert^2\partial_{j}^xa(x-y)\{u,h\}_p^j(x)dxdy\\
&+8\int_{\mathbb{R}^n\times\mathbb{R}^n}\vert u(y)\vert^2\left(\partial_{jk}^x\Delta a(x-y)\right)\partial_ju(x)\partial_k\bar{u}(x)dxdy\\
&+4\int_{\mathbb{R}^n\times\mathbb{R}^n}\vert u(y)\vert^2\left(\Delta^2a(x-y)\right)\vert\nabla u(x)\vert^2dxdy\hskip.1cm .
 \end{split}
\end{equation}
Let $T1$ and $T2$ be the last two terms in \eqref{IntMorEqt2}. Then
\begin{equation*}\label{LastNonpositive}
\begin{split}
&\frac{1}{4(n-1)}\left(T1 + T2\right)\\
&=-\int_{\mathbb{R}^{2n}}\frac{\vert u(y)\vert^2}{\vert x-y\vert^3}\left((n-1)\delta_{jk}-6\frac{(x-y)_j(x-y)_k}{\vert x-y\vert^2}\right)\partial_ju(x)\partial_k\bar{u}(x)dxdy
\end{split}
\end{equation*}
which is nonpositive when $n\ge 7$. Finally, \eqref{IntMorEqt2} and this remark give \eqref{FLIME-InteractionMor1}.
\end{proof}

%%%%%%%%%%%%%%%%%%%%%%%%%%%%%%%%%%%%%%%%%%%%%%%%%%%%%%%%%%%%%%%%%%%%%%%%%%%%%%%%%%%%%%%%%%%%%%%%%%%%%%%%%%%%%%%%%%%%%%%%%%%%%%%%%%%%%%%%%%%%%%%%%%                       FLIME                               %%%%%%%%%%%%%%%%%%%%%%%%%%%%%%%%%%%%%%%%%%%%%%%%%%%%%%%%%%%%%%%%%%%%%%%%%%%%%%%%%%%%%%%%%%%%%%%%%%%%%%%%%%%%%%%%%%%
\section{A frequency-localized interaction Morawetz estimate}\label{Section-FLIME}

The preceding interaction Morawetz estimate is ill-suited for $\dot{H}^2$-solutions. In order to exploit such an estimate in the context of $\dot{H}^2$-solutions, we need to localize it at high frequencies. The difficulty then is to deal with an inequality that scales like the $\dot{H}^\frac{1}{4}$-norm, while using only bounds that scale like the $\dot{H}^2$-norm. To overcome this difference of $7/4$ derivatives, we split the solution into high and low frequencies and  develop an intricate bootstrap argument to get the inequality. This is made possible because we restrict ourselves to the case of the special solutions obtained in Theorem \ref{3S-H2CritThm3S}.
More precisely, we prove that the following proposition holds true.

\begin{proposition}\label{FLIME-FLIMEProp}
Let $n=8$. Let $u\in C(I,\dot{H}^2)$ be a maximal lifespan-solution of \eqref{CubicFOS} such that $K=\{g(t)u(t):t\in I\}$ is precompact in $\dot{H}^2$ and such that $\forall t\in I,\hskip.1cm h(t)\le h(0)=1$. Then, for any sufficiently small $\varepsilon>0$,
\begin{equation}\label{FLIMEPropEst2}
\begin{split}
&\Vert \vert\nabla\vert^{-\frac{3}{2}}\vert P_{\ge 1}u\vert^2\Vert_{L^2(I,L^2)}\lesssim\varepsilon\hskip.1cm,\\
&\Vert P_{\ge 1}u\Vert_{\dot{S}^{-\frac{3}{2}}(I)}\lesssim\varepsilon\hskip.1cm,\hskip.1cm\hbox{and}\hskip.1cm\Vert P_{\le 1}u\Vert_{\dot{S}^2(I)}\lesssim \varepsilon
\end{split}
\end{equation}
up to replacing $u$ by $g_{(N,0)}u$ for some $N$.
\end{proposition}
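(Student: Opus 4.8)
The strategy is to feed the \emph{a priori} interaction Morawetz inequality of Proposition~\ref{FLIME-InteractionMor1}, applied to the high-frequency part $u_{hi}=P_{\ge1}u$ of the solution, into a continuity (bootstrap) argument that exchanges control between the three quantities appearing in \eqref{FLIMEPropEst2}; the $7/4$-derivative discrepancy between the Morawetz inequality (which scales like $\dot{H}^{1/4}$) and the only uniform bound we have (which scales like $\dot{H}^2$) is absorbed by Lemma~\ref{FLIME-SQRTEstimateLemma} together with the frequency localization furnished by precompactness of $K$. \emph{Normalization.} First I would use precompactness of $K$ and the hypothesis $h(t)\le1$ to replace $u$ by $g_{(N,0)}u$ for a large $N=N(\varepsilon)$: this rescaling lowers the concentration scale by the factor $N$, so that $\sqrt{N}\le h(t)^{-1}$ for all $t$ and the uniform frequency localization of $K$ (that is, $\sup_t\Vert P_{\le\delta}g(t)u(t)\Vert_{\dot{H}^2}\to0$ as $\delta\to0$) yields $\sup_t\Vert P_{\le\sqrt{N}}u(t)\Vert_{\dot{H}^2}\to0$ as $N\to\infty$. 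Setting $u_{lo}=P_{\le1}u$ and $u_{hi}=P_{\ge1}u$, one deduces, using Bernstein \eqref{BernSobProp}, that for $N$ large all of $\Vert u_{lo}\Vert_{L^\infty(I,\dot{H}^2)}$, $\sup_t\Vert u_{hi}(t)\Vert_{L^2}$, $\sup_t\Vert u_{hi}(t)\Vert_{\dot{H}^{1/2}}$, $\Vert u_{hi}(0)\Vert_{\dot{H}^{-3/2}}$ and $\Vert u_{lo}(0)\Vert_{\dot{H}^2}$ are $\le\varepsilon$; in particular $u_{hi}\in C(I,H^2)$. One also checks that the three norms in \eqref{FLIMEPropEst2} are finite on every compact $J\subset I$ and tend to their ``initial'' values as $J$ shrinks to a point, since each of them sits at a regularity level at which low frequencies cause no trouble and since $u_{hi}\in L^\infty_t(L^2\cap\dot{H}^2)\subset L^\infty_tL^4$ for $n=8$.

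\emph{The Morawetz input.} I would apply Proposition~\ref{FLIME-InteractionMor1} to $u_{hi}$, which solves \eqref{Not-LinSol} on any compact $J\subset I$ with forcing $F_{hi}=P_{\ge1}(\vert u\vert^2u)\in\dot{\bar{S}}^2(J)+\dot{\bar{S}}^0(J)$. Since $n=8$, the Riesz kernel $\vert x-y\vert^{-5}$ has Fourier symbol a constant multiple of $\vert\xi\vert^{-3}$, so the last term on the left of \eqref{FLIME-IntMorEst1} equals $c\,\Vert\,\vert\nabla\vert^{-3/2}\vert u_{hi}\vert^2\Vert_{L^2(J,L^2)}^2$, while its right-hand side is bounded by $\sup_t\Vert u_{hi}(t)\Vert_{L^2}^2\Vert u_{hi}(t)\Vert_{\dot{H}^{1/2}}^2\le\varepsilon^2$ by the normalization step. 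For the two bracket terms I would write $F_{hi}=\vert u_{hi}\vert^2u_{hi}-P_{\le1}(\vert u_{hi}\vert^2u_{hi})+P_{\ge1}\mathcal{O}\big(u_{lo}(u_{lo}+u_{hi})^2\big)$ and observe that the cubic self-interaction $\vert u_{hi}\vert^2u_{hi}$ is harmless: $\{\vert u_{hi}\vert^2u_{hi},u_{hi}\}_m=0$, while $\{\vert u_{hi}\vert^2u_{hi},u_{hi}\}_p=-\tfrac12\nabla\vert u_{hi}\vert^4$, which after an integration by parts against $(x-y)/\vert x-y\vert$ contributes a positive multiple of $\iint\vert u_{hi}(y)\vert^2\vert u_{hi}(x)\vert^4\vert x-y\vert^{-1}\,dxdy$ that may simply be discarded. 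Dropping all favourable terms, this gives
\[
\Vert\,\vert\nabla\vert^{-\frac{3}{2}}\vert u_{hi}\vert^2\Vert_{L^2(J,L^2)}^2\ \lesssim\ \varepsilon^2+\mathcal{E}(J),
\]
where $\mathcal{E}(J)$ collects the errors built from the frequency-localization commutator $P_{\le1}(\vert u_{hi}\vert^2u_{hi})$ and from the terms carrying at least one factor $u_{lo}$, all of which are to be estimated by Cauchy--Schwarz, H\"older, the smoothing properties of the kernel $(x-y)_j/\vert x-y\vert$ and Lemma~\ref{FLIME-SQRTEstimateLemma}.

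\emph{The bootstrap.} Finally I would run a continuity argument, over compact intervals $J\ni0$ with $J\nearrow I$, for the quantity
\[
X(J)=\Vert\,\vert\nabla\vert^{-\frac{3}{2}}\vert u_{hi}\vert^2\Vert_{L^2(J,L^2)}+\Vert u_{hi}\Vert_{\dot{S}^{-\frac{3}{2}}(J)}+\Vert u_{lo}\Vert_{\dot{S}^2(J)},
\]
which is continuous in $J$ and $\le\varepsilon$ for $J$ a short interval around $0$. Assuming $X(J)\le\varepsilon^{1/2}$, I would bound each of the three pieces of $X(J)$ using the Strichartz estimate \eqref{Not-SE-StrichartzEstimatesWithGainII} and the Duhamel formula, and $\mathcal{E}(J)$ as above, splitting the cubic nonlinearity according to the number of factors $u_{hi}$ it carries: a term with at most one $u_{hi}$ is controlled by powers of $\Vert u_{lo}\Vert_{\dot{S}^2(J)}$ times either a negative-regularity norm of $u_{hi}$ (for the $\dot{S}^{-\frac{3}{2}}$ estimate) or the tiny frequency-$\sim1$ part of $u_{hi}$ (for the $\dot{S}^2$ estimate), while a term with two or three factors $u_{hi}$ is, after Lemma~\ref{FLIME-SQRTEstimateLemma} and Bernstein, bounded by $\Vert\,\vert\nabla\vert^{-3/2}\vert u_{hi}\vert^2\Vert_{L^2(J,L^2)}$ times factors controlled by $E_{max}$ and by negative powers of $N$. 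Collecting these bounds together with the display above yields an estimate of the schematic form $X(J)\lesssim_{E_{max}}\varepsilon+X(J)^{3/2}+\varepsilon X(J)$, which for $\varepsilon$ small enough forces $X(J)\lesssim_{E_{max}}\varepsilon$ throughout $I$; this is \eqref{FLIMEPropEst2} after renaming $\varepsilon$.

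\emph{Main obstacle.} The heart of the matter is to estimate, at the negative regularity $\dot{S}^{-\frac{3}{2}}$, both the error $\mathcal{E}$ and the ``two-or-three high-frequency factors'' part of the nonlinearity using only the $\dot{H}^2$ control on $u_{hi}$: this is exactly the $7/4$-gap, and it is closed by Lemma~\ref{FLIME-SQRTEstimateLemma}, which trades $\vert u_{hi}\vert^2$ at regularity $-3/2$ for $u_{hi}$ at regularity $-3/4$ in $L^4$, but arranging that every error term is genuinely superlinear in $X(J)$ or carries a gain $\varepsilon$ or $N^{-\sigma}$ requires careful bookkeeping of Bernstein factors rather than any single clean estimate. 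A secondary technical point, that the forcing lies in $\dot{\bar{S}}^2+\dot{\bar{S}}^0$ and that $X(J)$ is finite on compact subintervals --- not automatic, since $\dot{H}^2$-solutions need not belong to $L^2$ --- is handled by working throughout with $u_{hi}\in H^2$ and with a short persistence-of-regularity argument for $u_{lo}$.
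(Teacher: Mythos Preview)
Your proposal is correct and follows essentially the same route as the paper's proof: rescale via precompactness to make $\Vert P_{\le 1}u\Vert_{L^\infty\dot H^2}$ and $\Vert P_{\ge 1}u\Vert_{L^\infty\dot H^s}$ ($s\le 7/4$) small, apply the interaction Morawetz inequality of Proposition~\ref{FLIME-InteractionMor1} to $u_{hi}$, observe that the self-interaction $|u_{hi}|^2u_{hi}$ is harmless in both brackets, and close a continuity argument in which the Strichartz bounds on $u_{lo}$ in $\dot S^2$ and on $u_{hi}$ in $\dot S^{-3/2}$ are obtained from the Morawetz quantity and in turn used to control the Morawetz error terms. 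The only organisational difference is that the paper keeps two small parameters $\varepsilon<\eta$ (eventually $\eta=\varepsilon^{1/2}$), first proves the Strichartz bounds \eqref{FLIME-Proof-Step1} on the set $J(2)=\{t:\Vert|\nabla|^{-3/2}|u_{hi}|^2\Vert_{L^2([0,t],L^2)}\le 2\eta\}$, then estimates the mass- and momentum-bracket errors separately (equations \eqref{FLIME-Proof-MBacketClaim} and \eqref{FLIME-Proof-MomBacketClaim}), and finally shows $J(2)\subset J(1)$; your single bootstrap on $X(J)$ with threshold $\varepsilon^{1/2}$ packages the same logic.
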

 
\begin{proof}
We fix $\varepsilon>0$ sufficiently small to be chosen later on.
We remark that for $N$ a dyadic number and for all time,
\begin{equation}\label{Cas-GainOfDerivative2}
\Vert P_{\le N}g^{-1}(t)\left(g(t)u(t)\right)\Vert_{\dot{H}^2}=\Vert P_{\le Nh(t)}\left(g(t)u(t)\right)\Vert_{\dot{H}^2}.
\end{equation}
Hence, by compactness of $K$, and since $h\le 1$, we have that $\Vert P_{\le N}u\Vert_{L^\infty\dot{H}^2}\to 0$ as $N\to 0$. Let $N$ be such that
\begin{equation*}
\Vert P_{\le \varepsilon^{-4}N}u\Vert_{L^\infty\dot{H}^2}\le \frac{\varepsilon}{2}.
\end{equation*}
Replacing $K$ by $Kg_{(\varepsilon^4 N^{-1},0)}$, and modifying slighly $h$, one can assume that
\begin{equation}\label{FLIME-ProofPreliminaryEst1}
\begin{split}
\Vert P_{\le 1}u\Vert_{L^\infty(I,\dot{H}^2)}&\le \varepsilon\hskip.1cm,\hskip.1cm\hbox{and}\\
\Vert P_{\ge 1}u\Vert_{L^\infty(I,\dot{H}^s)}&\le \Vert P_{1\le\cdot<\varepsilon^{-4}}u\Vert_{L^\infty(I,\dot{H}^2)}+\varepsilon^{4(2-s)}\Vert P_{\ge \varepsilon^{-4}}u\Vert_{L^\infty(I,\dot{H}^2)}\\
&\le \varepsilon,
\end{split}
\end{equation}
for $s\le 7/4$.
We let
\begin{equation}\label{FLIME-Proof-DefOfJC}
J\left(C\right)=\{t\ge 0:\Vert \vert\nabla\vert^{-\frac{3}{2}}\vert P_{\ge 1}u\vert^2\Vert_{L^2([0,t],L^2)}\le C\eta \}.
\end{equation}
The first step in the proof is to obtain good Strichartz controls on the high and low-frequency parts of $u$. In the sequel, we let $u_l=P_{< 1}u$, and $u_h=P_{\ge 1}u$. Besides the summations are always over all dyadic numbers, unless otherwise specified.
We claim that for $J=J(2)$, we have that
\begin{equation}\label{FLIME-Proof-Step1}
\begin{split}
&\Vert \vert\nabla\vert^{-\frac{3}{2}}\vert P_{\ge 1}u\vert^2\Vert_{L^2(J,L^2)}\le 2\eta,\\
&\Vert P_{\le 1}u\Vert_{\dot{S}^2(J)}\lesssim\varepsilon\hskip.1cm,\hskip.1cm\hbox{and}\\
&\Vert P_{\ge 1}u\Vert_{\dot{S}^{-\frac{3}{2}}(J)}\lesssim \eta,
\end{split}
\end{equation}
provided that $\varepsilon>0$ is sufficiently small, and that $\varepsilon<\eta$.
In the following, all space-time norms are taken on the interval $J$.
Applying the Strichartz estimates \eqref{Not-SE-StrichartzEstimatesWithGainII}, we get that
\begin{equation}\label{FLIME-Proof-LowEst}
\begin{split}
\Vert P_{\le 1}u\Vert_{\dot{S}^2}
&\lesssim \Vert P_{\le 1}u(0)\Vert_{\dot{H}^2}+\Vert \vert\nabla\vert P_{\le 1}\left(\vert u_l\vert^2u_l\right)\Vert_{L^2(J,L^\frac{8}{5})}\\
&+\sum_{j=0}^2\Vert \vert\nabla\vert P_{\le 1}\mathcal{O}\left(u_l^ju_h^{3-j}\right)\Vert_{L^2(J,L^\frac{8}{5})}\\
&\lesssim \varepsilon+\Vert u_l\Vert_{\dot{S}^2}^3+\sum_{j=0}^2\Vert \vert\nabla\vert P_{\le 1}\mathcal{O}\left(u_l^ju_h^{3-j}\right)\Vert_{L^2(J,L^\frac{8}{5})}.
\end{split}
\end{equation}
Now, we estimate the terms in the sum.
First, using the Bernstein's properties \eqref{BernSobProp} and \eqref{FLIME-ProofPreliminaryEst1}, we get that
\begin{equation}\label{FLIME-Proof-LowEstEqt1}
\begin{split}
\Vert \vert\nabla\vert P_{\le 1}\mathcal{O}\left(u_l^2u_h\right)\Vert_{L^2 L^\frac{8}{5}}&\lesssim \Vert u_l^2u_h\Vert_{L^2L^\frac{8}{5}}\\
&\lesssim \Vert u_l\Vert_{L^4L^8}\Vert u_l\Vert_{L^4L^8}\Vert u_h\Vert_{L^\infty L^\frac{8}{3}}\\
&\lesssim \varepsilon\Vert u_l\Vert_{\dot{S}^2}^2.
\end{split}
\end{equation}
For the next term, we remark that if $N\ge 4M$ and $N\ge 8$, then the Fourier support of $P_NuP_Mv$ is supported in $\{\vert \xi\vert\ge 2\}$, and $P_{\le 1}\left(P_NuP_Mv\right)=0$.
Using this remark, the Bernstein's properties \eqref{BernSobProp}, \eqref{FLIME-ProofPreliminaryEst1} and the Cauchy-Schwartz inequality, we get
\begin{equation}\label{FLIME-Proof-LowEstEqt2}
\begin{split}
&\Vert \vert \nabla\vert P_{\le 1}\mathcal{O}\left(u_lu_h^2\right)\Vert_{L^2 L^\frac{8}{5}}\\
&\lesssim
\Vert P_{\le 1}\mathcal{O}\left(u_lu_h^2\right)\Vert_{L^2 L^\frac{8}{5}}\\
&\lesssim \sum_{M\le 1,N\le 8}\Vert P_{\le 1}\left(P_Mu P_N\mathcal{O}\left(u_h^2\right)\right)\Vert_{L^2L^\frac{8}{5}}\\
&\lesssim \left(\sum_{M\le 1}\Vert P_Mu\Vert_{L^\infty L^8}\right)\left(\sum_{N\le 8}\Vert P_N\mathcal{O}\left(u_h^2\right)\Vert_{L^2L^2}\right)\\
&\lesssim \left(\sum_{M\le 1} M^{-1}\Vert P_Mu\Vert_{L^\infty L^8}^2\right)^\frac{1}{2}\left(\sum_{N\le 8}N^{-3}\Vert P_N\mathcal{O}\left(u_h^2\right)\Vert_{L^2L^2}^2\right)^\frac{1}{2}\\
&\lesssim \Vert u_l\Vert_{L^\infty \dot{H}^2}\Vert \vert\nabla\vert^{-\frac{3}{2}}\vert u_h\vert^2\Vert_{L^2L^2}\\
&\lesssim \varepsilon\eta,
\end{split}
\end{equation}
where we have used in the last inequalities that since $\vert\nabla\vert^{-\frac{3}{2}}$ has a positive kernel, we have that
$\Vert\vert \nabla\vert^{-\frac{3}{2}}\mathcal{O}(u_h^2)\Vert_{L^2L^2}\le \Vert \vert\nabla\vert^{-\frac{3}{2}}\vert u_h\vert^2\Vert_{L^2L^2}$.
We treat the last term similarly as follows, by writing that
\begin{equation}\label{FLIME-Proof-LowEstEqt3}
\begin{split}
&\Vert \vert\nabla\vert P_{\le 1}\mathcal{O}\left(u_h^3\right)\Vert_{L^2 L^\frac{8}{5}}\\
&\lesssim \Vert P_{\le 1}\mathcal{O}\left(u_h^3\right)\Vert_{L^2L^1}\\
&\lesssim \sum_{1\le N\le 8,M\le 32}\Vert P_{\le 1}\left(P_Nu_hP_M\mathcal{O}\left(u_h^2\right)\right)\Vert_{L^2L^1}\\
&+\sum_{N\ge 8,4N\ge M\ge N/4}\Vert P_{\le 1}\left(P_Nu_hP_M\mathcal{O}\left(u_h^2\right)\right)\Vert_{L^2L^1}\\
&\lesssim \left(\sum_M M^3\Vert P_M u_h\Vert_{L^\infty L^2}^2\right)^\frac{1}{2}\left(\sum_MM^{-3}\Vert P_M\vert u_h\vert^2\Vert_{L^2L^2}^2\right)^\frac{1}{2}\\
&\lesssim \Vert u_h\Vert_{L^\infty H^\frac{7}{4}}\Vert \vert\nabla\vert^{-\frac{3}{2}}\vert u_h\vert^2\Vert_{L^2L^2}\\
&\lesssim \varepsilon\eta.
\end{split}
\end{equation}
Finally, we get with \eqref{FLIME-Proof-LowEst}--\eqref{FLIME-Proof-LowEstEqt3} that
\begin{equation}\label{FLIME-Proof-LowEstEqt4}
\begin{split}
\Vert u_l\Vert_{\dot{S}^2}\lesssim\Vert P_{\le 1}u\Vert_{\dot{S}^2}&\lesssim \varepsilon +\eta\varepsilon+\varepsilon\Vert u_l\Vert_{\dot{S}^2}^2+\Vert u_l\Vert_{\dot{S}^2}^3
\end{split}
\end{equation}
and this proves the second inequality in \eqref{FLIME-Proof-Step1} with $u_l$ instead of $P_{\le 1}u$ if $\varepsilon>0$ is sufficiently small. Using again \eqref{FLIME-Proof-LowEstEqt4}, we get the second inequality in \eqref{FLIME-Proof-Step1}.
Now we turn to the control on $u_h$.
Still using the Strichartz estimates \eqref{Not-SE-StrichartzEstimatesWithGainII} and Sobolev's inequality, we get that
\begin{equation}\label{FLIME-Proof-HighEst1}
\begin{split}
\Vert u_h\Vert_{\dot{S}^{-\frac{3}{2}}}&\lesssim \Vert u_h(0)\Vert_{\dot{H}^{-\frac{3}{2}}}+\sum_{j=0}^3\Vert \vert\nabla\vert^{-\frac{5}{2}}P_{\ge 1}\mathcal{O}\left(u_h^ju_l^{3-j}\right)\Vert_{L^2L^\frac{8}{5}}\\
&\lesssim \varepsilon
+\sum_{j=2,3}\Vert P_{\ge 1}\mathcal{O}\left(u_h^ju_l^{3-j}\right)\Vert_{L^2L^\frac{16}{15}}\\
&+\sum_{j=0,1}\Vert\vert\nabla\vert^{-\frac{5}{2}}P_{\ge 1}\mathcal{O}\left(u_h^ju_l^{3-j}\right)\Vert_{L^2L^\frac{8}{5}}.
\end{split}
\end{equation}
By convolution estimate, letting $c_N=N^{-\frac{3}{4}}\vert P_Nu_h\vert$, we get that
\begin{equation}\label{FLIME-Proof-HighEst2}
\begin{split}
\left\vert\vert u_h\vert^2u_h\right\vert
&\lesssim \vert \sum_{M_1\ge M_2\ge M_3}\mathcal{O}\left(P_{M_1}u_hP_{M_2}u_hP_{M_3}u_h\right)\vert\\
&\lesssim \vert\sum_{M_1\ge M_2\ge M_3}c_{M_3}\left(\frac{M_3}{M_2}\right)^\frac{3}{4}c_{M_2}\left(\frac{M_2}{M_1}\right)^\frac{3}{2}M_1^\frac{3}{2} P_{M_1}u_h\vert\\
&\lesssim \left(\sum_M c_M^2\right)\left(\sup_MM^\frac{3}{2}\vert P_Mu_h\vert\right).
\end{split}
\end{equation}
Consequently, using the Bernstein's properties \eqref{BernSobProp}, \eqref{FLIME-SQRTEstimate} and \eqref{FLIME-Proof-HighEst2},
we get that
\begin{equation}\label{FLIME-Proof-HighEst3}
 \begin{split}
  &\Vert P_{\ge 1}\vert u_h\vert^2u_h\Vert_{L^2L^\frac{16}{15}}\\
  &\lesssim \Vert \vert u_h\vert^2u_h\Vert_{L^2L^\frac{16}{15}}\\
  &\lesssim \Vert (\sum_M M^{-\frac{3}{2}}\vert P_Mu_h\vert^2)^\frac{1}{2}\Vert_{L^4L^4}^2\left(\Vert\sup_M \vert\nabla\vert^\frac{3}{2}P_Mu_h\Vert_{L^\infty L^\frac{16}{7}}\right)\\
  &\lesssim \Vert \vert\nabla\vert^{-\frac{3}{2}}\vert u_h\vert^2\Vert_{L^2L^2}\Vert u_h\Vert_{L^\infty\dot{H}^2}\\
  &\lesssim_{E_{max}} \eta.
 \end{split}
\end{equation}
Note that instead of using the pointwise evaluation of $u_h=\sum P_Mu_h$, we can replace $u_h$ by an arbitrary Schwartz function, get the bound, and then use density arguments to recover \eqref{FLIME-Proof-HighEst3}.
When $j=2$, we proceed as follows, using Sobolev's inequality, the Bernstein's properties \eqref{BernSobProp}, \eqref{FLIME-ProofPreliminaryEst1} and the estimate for $u_l$ in \eqref{FLIME-Proof-Step1},
\begin{equation}\label{FLIME-Proof-HighEst4}
\begin{split}
\Vert \mathcal{O}\left(u_h^2u_l\right)\Vert_{L^2L^\frac{16}{15}}&\lesssim \Vert u_l\Vert_{L^4 L^8}\Vert u_h\Vert_{L^4L^\frac{16}{7}}\Vert u_h\Vert_{L^\infty L^\frac{8}{3}}\\
&\lesssim \varepsilon\Vert u_l\Vert_{\dot{S}^2}\Vert \vert\nabla\vert^{-\frac{3}{2}}u_h\Vert_{L^2L^\frac{8}{3}}^\frac{1}{2}\Vert \vert\nabla\vert^\frac{3}{2}u_h\Vert_{L^\infty L^2}^\frac{1}{2}\\
&\lesssim \varepsilon^\frac{5}{2}\Vert u_h\Vert_{\dot{S}^{-\frac{3}{2}}}^\frac{1}{2}.
\end{split}
\end{equation}
When $j=1$, we proceed similarly to get
\begin{equation}\label{FLIME-Proof-HighEst5}
\begin{split}
\Vert \vert\nabla\vert^{-\frac{5}{2}}P_{\ge 1}\mathcal{O}\left(u_l^2u_h\right)\Vert_{L^2L^\frac{8}{5}}&\lesssim \Vert \mathcal{O}\left(u_l^2u_h\right)\Vert_{L^2L^\frac{8}{5}}\\
&\lesssim \Vert u_l\Vert_{L^4L^8}^2\Vert u_h\Vert_{L^\infty L^\frac{8}{3}}\\
&\lesssim \varepsilon^3,
\end{split}
\end{equation}
and finally,
\begin{equation}\label{FLIME-Proof-HighEst6}
 \begin{split}
  \Vert\vert\nabla\vert^{-\frac{5}{2}}P_{\ge 1}\vert u_l\vert^2u_l\Vert_{L^2L^\frac{8}{5}}&\lesssim \Vert \vert\nabla\vert \vert u_l\vert^2u_l\Vert_{L^2L^\frac{8}{5}}\\
  &\lesssim \Vert u_l\Vert_{\dot{S}^2}^3\\
  &\lesssim \varepsilon^3.
 \end{split}
\end{equation}
Combining \eqref{FLIME-Proof-HighEst1} and \eqref{FLIME-Proof-HighEst3}--\eqref{FLIME-Proof-HighEst6}, we get that
\begin{equation*}\label{FLIME-Proof-HighEst7}
\begin{split}
 \Vert u_h\Vert_{\dot{S}^{-\frac{3}{2}}}&\lesssim \varepsilon+\eta+\varepsilon^3+\varepsilon^\frac{5}{2}\Vert u_h\Vert_{\dot{S}^{-\frac{3}{2}}}^\frac{1}{2}\\
&\lesssim \eta.
\end{split}
\end{equation*}
This ends the proof of of \eqref{FLIME-Proof-Step1}.
As a consequence of conservation of energy, \eqref{FLIME-ProofPreliminaryEst1}, \eqref{FLIME-Proof-Step1} and Hardy-Littlewood-Sobolev's inequality, we get the following estimates on $J=J(2)$. Namely,
\begin{equation}\label{FLIME-Proof-XXXEstimates}
\begin{split}
&\Vert u_h\Vert_{L^\frac{5}{2}L^\frac{5}{2}}\lesssim_{E_{max}} \eta^\frac{4}{5}\hskip.1cm,\hskip.1cm
\Vert u_h\Vert_{L^3L^\frac{8}{3}}\lesssim_{E_{max}} \eta^\frac{2}{3}\hskip.1cm,\hskip.1cm
\Vert u_h\Vert_{L^\frac{9}{2}L^3}\lesssim_{E_{max}} \eta^\frac{4}{9}\hskip.1cm,\hskip.1cm\hbox{and}\\
&\Vert \vert u_h\vert^2\ast \vert x\vert^{-1}\Vert_{L^3L^{24}}\lesssim \Vert u_h\Vert_{L^6L^\frac{24}{11}}^2\lesssim_{E_{max}} \varepsilon^\frac{4}{3}\eta^\frac{2}{3}
\end{split}
\end{equation}
Now that we have good Strichartz control on the high and low frequencies, we can control the error terms arising in the frequency-localized interaction Morawetz estimates.
First, we treat the terms arising from the mass bracket. We claim that on $J=J(2)$, as defined above, we have that
\begin{equation}\label{FLIME-Proof-MBacketClaim}
\begin{split}
\int_J\int_{\mathbb{R}^{2n}}\{P_{\ge 1}\left(\vert u\vert^2u\right),u\}_m(t,y)\frac{(x-y)_j}{\vert x-y\vert}\{\partial_ju,u\}_m(t,x) dxdy&\lesssim \varepsilon^2\eta^2.
\end{split}
\end{equation}
Exploiting cancellations, we write
\begin{equation}\label{FLIME-Proof-MBacketEqt1}
\begin{split}
 \{P_{\ge 1}\left(\vert u\vert^2u\right),u_h\}_m=&\{P_{\ge 1}\left(\vert u\vert^2u-\vert u_h\vert^2u_h\right),u_h\}_m\\
 &-\{P_{< 1}\left(\vert u_h\vert^2u_h\right),u_h\}_m
+\{ \vert u_h\vert^2u_h,u_h\}_m.
\end{split}
\end{equation}
The last term in the right-hand side of \eqref{FLIME-Proof-MBacketEqt1} vanishes.
Using the Bernstein's properties \eqref{BernSobProp}, \eqref{FLIME-ProofPreliminaryEst1} and \eqref{FLIME-Proof-XXXEstimates}, we get that
\begin{equation}\label{FLIME-Proof-MBacketEqt2}
 \begin{split}
 & \left\vert\int_J\int_{\mathbb{R}^{2n}}\hbox{Im}\left(\partial_ku_h(x)\bar{u}_h(x)\right)\frac{(x-y)_k}{\vert x-y\vert}\{P_{< 1}\vert u_h\vert^2u_h,u_h\}_m(y)dxdydt\right\vert\\
&\lesssim \Vert u_h\Vert_{L^\infty L^2}\Vert\nabla u_h\Vert_{L^\infty L^2}\int_J\left\vert\left( P_{< 1}\vert u_h\vert^2u_h\right)u_h\right\vert dxdt\\
&\lesssim \varepsilon^2\Vert u_h\Vert_{L^3 L^\frac{8}{3}}\Vert P_{< 1}\vert u_h\vert^2u_h\Vert_{L^\frac{3}{2}L^\frac{8}{5}}\\
&\lesssim \varepsilon^2 \Vert u_h\Vert_{L^3 L^\frac{8}{3}}  \Vert \vert u_h\vert^2u_h\Vert_{L^\frac{3}{2}L^1}\\
&\lesssim \varepsilon^2 \Vert u_h\Vert_{L^3 L^\frac{8}{3}}^3\Vert u_h\Vert_{L^\infty L^4}\\
&\lesssim \varepsilon^2 \eta^2.\\
 \end{split}
\end{equation}
As for the first term in \eqref{FLIME-Proof-MBacketEqt1},
using \eqref{FLIME-ProofPreliminaryEst1}, we get that
\begin{equation}\label{FLIME-Proof-MBacketEqt3}
\begin{split}
&\left\vert \int_J\int_{\mathbb{R}^{2n}}\{\partial_ku_h,u_h\}_m(x)\frac{(x-y)_k}{\vert x-y\vert}\{ P_{\ge 1} \left(\vert u\vert^2u-\vert u_h\vert^2u_h\right),u_h\}_m(y) dxdydt
\right\vert\\
&
\lesssim \sum_{j=0}^2 \Vert u_h\Vert_{L^\infty H^1}^2\int_J\left\vert \left(P_{\ge 1}\mathcal{O}\left(u_h^ju_l^{3-j}\right)\right)u_h\right\vert dxdt
\end{split}
\end{equation}
and, using the Bernstein's properties \eqref{BernSobProp},  \eqref{FLIME-Proof-Step1}, and \eqref{FLIME-Proof-XXXEstimates}, we obtain
\begin{equation}\label{FLIME-Proof-MBacketEqt4}
\begin{split}
 \int_J\left\vert P_{\ge 1}\mathcal{O}\left(u_h^2u_l\right)u_h\right\vert dxdt
 &\lesssim \Vert u_h\Vert_{L^\frac{9}{2}L^3}\Vert u_h^2u_l\Vert_{L^\frac{9}{7}L^\frac{3}{2}}\\
 &\lesssim \Vert u_h\Vert_{L^\frac{9}{2}L^3}^3\Vert u_l\Vert_{L^3 L^\infty}\\
 &\lesssim \eta^\frac{4}{3}\varepsilon.
\end{split}
\end{equation}
Similarly,
\begin{equation}\label{FLIME-Proof-MBacketEqt5}
\begin{split}
 \int_J\left\vert P_{\ge 1} \mathcal{O} \left(u_h u_l^2\right)u_h\right\vert dxdt
 &\lesssim \Vert u_h\Vert_{L^3 L^\frac{8}{3}}^2\Vert u_l\Vert_{L^{6}L^8}^2\\
 &\lesssim \eta^\frac{4}{3}\varepsilon^2.
\end{split}
\end{equation}
In order to treat the last term, we remark that, in view of the Fourier support, if $M_1,M_2,M_3\le 1/8$, then $P_{\ge 1}\left(P_{M_1}uP_{M_2}uP_{M_3}u\right)=0$. Consequently, letting $c_M=M^2\Vert P_M u\Vert_{L^2L^4}$ and $d_M=M^2\Vert P_Mu\Vert_{L^\infty L^2}$, we get, using again the Bernstein's properties \eqref{BernSobProp}, \eqref{FLIME-ProofPreliminaryEst1} and \eqref{FLIME-Proof-Step1}, that
\begin{equation}\label{FLIME-Proof-MBacketEqt6}
\begin{split}
 &\int_J\left\vert \left(P_{\ge 1}\vert u_l\vert^2u_l\right)u_h\right\vert dxdt\\
&\lesssim \Vert u_h\Vert_{L^\infty L^2}\Vert P_{\ge 1}\sum_{1\ge M_1\ge M_2\ge M_3}P_{M_1}uP_{M_2}uP_{M_3}u\Vert_{L^1L^2}\\
&\lesssim \Vert u_h\Vert_{L^\infty L^2}\sum_{1\ge M_1\ge 1/8,M_1\ge M_2\ge M_3}\Vert P_{M_1}uP_{M_2}uP_{M_3}u\Vert_{L^1L^2}\\
&\lesssim \Vert u_h\Vert_{L^\infty L^2}\sum_{1\ge M_1\ge 1/8,M_1\ge M_2\ge M_3}
\Vert P_{M_1}\Vert_{L^2 L^4}\Vert P_{M_2}u\Vert_{L^2L^4}\Vert P_{M_3}u\Vert_{L^\infty L^\infty}\\
&\lesssim \Vert u_h\Vert_{L^\infty L^2}\left(\sum_{1\ge M\ge 1/8}\Vert P_{M}u\Vert_{L^2 L^4}\right)\sum_{1\ge M_2\ge M_3}c_{M_2}d_{M_3}\left(\frac{M_3}{M_2}\right)^2\\
&\lesssim \Vert u_h\Vert_{L^\infty L^2}\Vert u_l\Vert_{\dot{S}^2}^3\\
&\lesssim \varepsilon^4. 
\end{split}
\end{equation}
Combining \eqref{FLIME-Proof-MBacketEqt1}--\eqref{FLIME-Proof-MBacketEqt6},
we see that \eqref{FLIME-Proof-MBacketClaim} holds true.
Now, we turn to the last error term, which arises from the momentum bracket. We claim that on $J=J(2)$, we have that
\begin{equation}\label{FLIME-Proof-MomBacketClaim}
\begin{split}
 &\Big\vert\int_J\int_{\mathbb{R}^{2n}}\vert u_h(s,y)\vert^2\frac{(x-y)_j}{\vert x-y\vert}\{P_{\ge 1}\vert u\vert^2u,u_h\}_p^j(s,x)dxdyds\\
 &-\frac{1}{2}\int_{J}\int_{\mathbb{R}^{2n}}\frac{\vert u_h(s,y)\vert^2\vert u_h(s,x)\vert^4}{\vert x-y\vert}dxdyds\Big\vert\\
 &\lesssim \eta^2\left(\varepsilon^\frac{7}{3}\eta^{-\frac{2}{3}}+\varepsilon^2+\varepsilon^\frac{16}{3}\eta^{-\frac{4}{3}}\right).
\end{split}
\end{equation}
In order to prove \eqref{FLIME-Proof-MomBacketClaim}, we decompose
\begin{equation}\label{FLIME-Proof-MomBacketNewEqt1}
\begin{split}
\{P_{\ge 1}\vert u\vert^2u,u_h\}_p&=\{\vert u\vert^2u,u\}_p-\{\vert u_l\vert^2u_l,u_l\}_p-\{\left(\vert u\vert^2u-\vert u_l\vert^2u_l\right),u_l\}_p\\
&-\{P_{< 1}\vert u\vert^2u,u_h\}_p\\
&=-\frac{1}{2}\nabla\left(\vert u\vert^4-\vert u_l\vert^4\right)-\{\left(\vert u\vert^2u-\vert u_l\vert^2u_l\right),u_l\}_p\\
&-\{P_{< 1}\vert u\vert^2u,u_h\}_p.
\end{split}
\end{equation}
Besides, we remark that
\begin{equation}\label{FLIME-Proof-MomBacketEqt2}
\begin{split}
 \{ f,g\}_p&=\nabla\mathcal{O}\left(fg\right)-\mathcal{O}\left(f\nabla g\right).
\end{split}
\end{equation}
Now,
we estimate
\begin{equation}\label{FLIME-Proof-MomBacketNewEqt2}
\begin{split}
\mathcal{R}=\sum_{k=0}^{2}\int_J\int_{\mathbb{R}^{2n}}\vert u_h(s,y)\vert^2\frac{(x-y)_j}{\vert x-y\vert}\{\mathcal{O}\left(u_l^ku_h^{3-k}\right),u_l\}_p^j(s,x)dxdyds.
\end{split}
\end{equation}
The case $k=2$ is treated as follows, using \eqref{FLIME-Proof-MomBacketEqt2}. First
\begin{equation}\label{FLIME-Proof-MomBacketNewEqt3}
\begin{split}
&\left\vert\int_J\int_{\mathbb{R}^{2n}}\vert u_h(y)\vert^2\frac{(x-y)_j}{\vert x-y\vert}\mathcal{O}\left(u_hu_l^2\right)\partial_j u_l(x)dsdxdy\right\vert\\
&\lesssim \left\vert\int_J\int_{\mathbb{R}^n}\vert u_h(y)\vert^2\mathcal{O}\int_{\mathbb{R}^n}\left(\vert \nabla\vert^{-1}u_h\right)\left(\vert \nabla\vert\left(\frac{(x-y)_j}{\vert x-y\vert}\left(u_l^2\partial_ju_l\right)(x)\right)\right)dxdsdy\right\vert.
\end{split}
\end{equation}
Now, using the boundedness of the Riesz transform and the Bernstein's properties \eqref{BernSobProp}, we estimate for any $y\in\mathbb{R}^n$,
\begin{equation}\label{FLIME-Proof-MomBacketNewEqt4}
\begin{split}
&\Vert \vert\nabla\vert\left(\frac{(x-y)_j}{\vert x-y\vert}\left(u_l^2\partial_j u_l\right)(x)\right)\Vert_{L^\frac{8}{5}}\\
&\lesssim \Vert \nabla\left(\frac{(x-y)_j}{\vert x-y\vert}\left(u_l^2\partial_j u_l\right)(x)\right)\Vert_{L^\frac{8}{5}}\\
&\lesssim \Vert \vert x-y\vert^{-1}\mathfrak{1}_{\{\vert x-y\vert\le 1\}}\Vert_{L^2}\Vert u_l^2\Vert_{L^\infty}\Vert \nabla u_l\Vert_{L^8}\\
&+\Vert \vert x-y\vert^{-1}\mathfrak{1}_{\{\vert x-y\vert\ge 1\}}\Vert_{L^\infty}\Vert u_l^2\Vert_{L^4}\Vert\nabla u_l\Vert_{L^\frac{8}{3}}\\
&+\Vert \nabla u_l\Vert_{L^8}\Vert \partial_ju_l\Vert_{L^\frac{8}{3}}\Vert u_l\Vert_{L^8}+\Vert u_l^2\Vert_{L^4}\Vert \nabla^2u_l\Vert_{L^\frac{8}{3}}\\
&\lesssim \Vert u_l\Vert_{L^8}^2\Vert\nabla u_l\Vert_{L^\frac{8}{3}},
\end{split}
\end{equation}
where $\mathfrak{1}_E$ is the characteristic function of the set $E$.
Consequently, using the Bernstein's properties \eqref{BernSobProp}, \eqref{FLIME-ProofPreliminaryEst1}, \eqref{FLIME-Proof-Step1} and \eqref{FLIME-Proof-MomBacketNewEqt4}, we get that
\begin{equation}\label{FLIME-Proof-MomBacketNewEqt5}
\begin{split}
&\left\vert\int_J\int_{\mathbb{R}^n}\vert u_h(y)\vert^2\mathcal{O}\int_{\mathbb{R}^n}\left(\vert \nabla\vert^{-1}u_h\right)\left(\vert \nabla\vert\left(\frac{(x-y)_j}{\vert x-y\vert}\left(u_l^2\partial_ju_l\right)(x)\right)\right)dxdsdy\right\vert\\
&\lesssim \int_J\int_{\mathbb{R}^n}\vert u_h(y)\vert^2\Vert \vert\nabla\vert^{-1}u_h\Vert_{L^\frac{8}{5}}\Vert \vert\nabla\vert\left(\frac{(x-y)_j}{\vert x-y\vert}u_l^2\partial_j u_l\right)\Vert_{L^\frac{8}{5}}\\
&\lesssim \Vert u_h\Vert_{L^\infty L^2}^2\Vert \vert\nabla\vert^{-\frac{1}{2}}u_h\Vert_{L^2L^\frac{8}{3}}\Vert u_l\Vert_{L^4L^8}^2\Vert \nabla u_l\Vert_{L^\infty L^\frac{8}{3}}\\
&\lesssim \eta\varepsilon^5.
\end{split}
\end{equation}
Besides, integrating by parts and using \eqref{FLIME-Proof-Step1} and \eqref{FLIME-Proof-XXXEstimates}, we finish the analysis of the case $k=2$ as follows: 
\begin{equation}\label{FLIME-Proof-MomBacketNewEqt6}
\begin{split}
&\left\vert\int_J\int_{\mathbb{R}^{2n}}\vert u_h(y)\vert^2\vert x-y\vert^{-1}\vert\mathcal{O}\left(u_l^3u_h\right)(x)\vert dxdyds\right\vert\\
&\lesssim \Vert\vert u_h\vert^2\ast\vert x\vert^{-1}\Vert_{L^3 L^{24}}\Vert u_h\Vert_{L^\frac{5}{2}L^\frac{5}{2}}\Vert u_l\Vert_{L^\frac{45}{4}L^\frac{360}{67}}^3\\
&\lesssim \Vert u_h\Vert_{L^6 L^\frac{24}{11}}^2\Vert u_h\Vert_{L^\frac{5}{2}L^\frac{5}{2}}\Vert u_l\Vert_{\dot{S}^2}^3\\
&\lesssim \eta\varepsilon^4.
\end{split}
\end{equation}
The case $k=1$
is similar. First, with the Bernstein's properties \eqref{BernSobProp}, \eqref{FLIME-ProofPreliminaryEst1}, \eqref{FLIME-Proof-Step1} and \eqref{FLIME-Proof-XXXEstimates}, we obtain that
\begin{equation}\label{FLIME-Proof-MomBacketNewEqt7}
\begin{split}
&\left\vert\int_J\int_{\mathbb{R}^{2n}}\vert u_h(y)\vert^2\frac{(x-y)_j}{\vert x-y\vert}\mathcal{O}\left(u_h^2u_l\right)(x)\partial_j u_l(x)dsdxdy\right\vert\\
&\lesssim \Vert u_h\Vert_{L^\infty L^2}^2\Vert u_h^2\Vert_{L^\frac{3}{2}L^\frac{4}{3}}\Vert\nabla u_l\Vert_{L^3L^\frac{24}{5}}\Vert u_l\Vert_{L^\infty L^{24}}\\
&\lesssim \Vert u_h\Vert_{L^\infty L^2}^2\Vert u_l\Vert_{L^\infty L^4}\Vert u_l\Vert_{\dot{S}^2}\Vert u_h\Vert_{L^3L^\frac{8}{3}}^2\\
&\lesssim \varepsilon^4\eta^\frac{4}{3}
\end{split}
\end{equation}
and then,
\begin{equation}\label{FLIME-Proof-MomBacketNewEqt8}
\begin{split}
&\left\vert\int_J\int_{\mathbb{R}^{2n}}\vert u_h(y)\vert^2\vert x-y\vert^{-1}\mathcal{O}\left(u_h^2u_l^2\right)(x)dsdxdy\right\vert\\
&\lesssim \Vert \vert u_h\vert^2\ast\vert x\vert^{-1}\Vert_{L^3L^{24}}\Vert u_h\Vert_{L^\infty L^2}\Vert u_h\Vert_{L^\infty L^\frac{8}{3}}\Vert u_l\Vert_{L^3L^{24}}^2\\
&\lesssim \Vert u_h\Vert_{L^6L^\frac{24}{11}}^2\Vert u_h\Vert_{L^\infty H^1}^2\Vert u_l\Vert_{L^3L^{12}}^2\\
&\lesssim \varepsilon^\frac{16}{3}\eta^\frac{2}{3}.
\end{split}
\end{equation}
Finally for the case $k=0$, using the Bernstein's properties \eqref{BernSobProp}, \eqref{FLIME-ProofPreliminaryEst1}, \eqref{FLIME-Proof-Step1} and \eqref{FLIME-Proof-XXXEstimates}, we write that
\begin{equation}\label{FLIME-Proof-MomBacketNewEqt9}
\begin{split}
&\left\vert\int_J\int_{\mathbb{R}^{2n}}\vert u_h(y)\vert^2\frac{(x-y)_j}{\vert x-y\vert}\mathcal{O}\left(u_h^3\right)(x)\partial_j u_l(x)dsdxdy\right\vert\\
&\lesssim \Vert u_h\Vert_{L^\infty L^2}^2\Vert u_h^3\Vert_{L^\frac{3}{2}L^1}\Vert\nabla u_l\Vert_{L^3L^\infty}\\
&\lesssim \eta^\frac{4}{3}\varepsilon^3,
\end{split}
\end{equation}
and that
\begin{equation}\label{FLIME-Proof-MomBacketNewEqt10}
\begin{split}
&\left\vert\int_J\int_{\mathbb{R}^{2n}}\vert u_h(y)\vert^2\vert x-y\vert^{-1}\mathcal{O}\left(u_h^3u_l\right)(x)dsdxdy\right\vert\\
&\Vert \vert u_h\vert^2\ast\vert x\vert^{-1}\Vert_{L^3L^{24}}\Vert u_h\Vert_{L^3L^\frac{8}{3}}\Vert u_h\Vert_{L^\infty L^4}^2\Vert u_l\Vert_{L^3L^{12}}\\
&\lesssim \varepsilon^\frac{7}{3}\eta^\frac{4}{3}.
\end{split}
\end{equation}
This finishes the analysis of the second error term in the momentum bracket \eqref{FLIME-Proof-MomBacketNewEqt1}, namely $\mathcal{R}$.
Now we turn to the third error term arising from \eqref{FLIME-Proof-MomBacketNewEqt1}, i.e.
$$\tilde{\mathcal{R}}=\sum_{k=0}^3\int_J\int_{\mathbb{R}^{2n}}\vert u_h(s,y)\vert^2\frac{(x-y)_j}{\vert x-y\vert}\{P_{< 1}\mathcal{O}\left(u_h^ku_l^{3-k}\right),u_h\}_p^j(s,x)dxdyds.$$
We treat the term $k=0$ using \eqref{FLIME-Proof-MomBacketEqt2} as follows.
First, we get that
\begin{equation}\label{FLIME-Proof-MomBacket2NewEqt1}
\begin{split}
 &\left\vert\int_J\int_{\mathbb{R}^n}\vert u_h(y)\vert^2\int_{\mathbb{R}^n}u_h\frac{(x-y)_j}{\vert x-y\vert}\partial_j\left(P_{< 1} \mathcal{O}\left(u_l^3\right)\right)(x)dxdyds\right\vert\\
&=\left\vert\int_J\int_{\mathbb{R}^n}\vert u_h(y)\vert^2\int_{\mathbb{R}^n}\left(\vert\nabla\vert^{-1}u_h\right)\left(\vert\nabla\vert \left(\frac{(x-y)_j}{\vert x-y\vert}\partial_jP_{< 1} \mathcal{O}\left(u_l^3\right)\right)\right)dxdyds\right\vert\\
&\le \int_J\int_{\mathbb{R}^n}\vert u_h(y)\vert^2\Vert \vert\nabla\vert^{-1}u_h\Vert_{L^\frac{8}{3}}\Vert \vert\nabla\vert\left(\frac{(x-y)_j}{\vert x-y\vert}\partial_jP_{< 1}\mathcal{O}\left(u_l^3\right)\right)\Vert_{L^\frac{8}{5}}dsdy.
\end{split}
\end{equation}
Using the boundedness of the Riesz transform, we see that
\begin{equation}\label{FLIME-Proof-MomBacket2NewEqt2}
\begin{split}
&\Vert \vert\nabla\vert\left(\frac{(x-y)_j}{\vert x-y\vert}\partial_jP_{< 1}\mathcal{O}\left(u_l^3\right)\right)\Vert_{L^\frac{8}{5}}\\
&\lesssim \Vert\nabla\left(\frac{(x-y)_j}{\vert x-y\vert}\partial_jP_{< 1}\mathcal{O}\left(u_l^3\right)\right)\Vert_{L^\frac{8}{5}}\\
&\lesssim \Vert \mathfrak{1}_{\{\vert x-y\vert\le 1\}}\vert x-y\vert^{-1}\Vert_{L^2}\Vert \partial_ju_l\Vert_{L^8}\Vert u_l\Vert_{L^\infty}^2\\
&+\Vert \mathfrak{1}_{\{\vert x-y\vert\ge 1\}}\vert x-y\vert^{-1}\Vert_{L^\infty}\Vert \partial_ju_l\Vert_{L^8}\Vert u_l\Vert_{L^4}^2\\
&+\Vert \nabla\partial_ju_l\Vert_{L^8}\Vert u_l\Vert_{L^4}^2+\Vert \nabla u_l\Vert_{L^8}\Vert \nabla u_l\Vert_{L^4}\Vert u_l\Vert_{L^4}\\
&\lesssim \Vert u_l\Vert_{L^4}^2\Vert\nabla u_l\Vert_{L^8},
\end{split}
\end{equation}
and, consequently, using the Bernstein's properties \eqref{BernSobProp}, \eqref{FLIME-ProofPreliminaryEst1}, \eqref{FLIME-Proof-Step1} and \eqref{FLIME-Proof-MomBacket2NewEqt2} above, we obtain that
\begin{equation}\label{FLIME-Proof-MomBacket2NewEqt3}
\begin{split}
 &\left\vert\int_J\int_{\mathbb{R}^n}\vert u_h(y)\vert^2\left(\int_{\mathbb{R}^n}u_h(x)\frac{(x-y)_j}{\vert x-y\vert}\partial_j\left(P_{< 1} \vert u_l\vert^2u_l\right)(x)dx\right)dyds\right\vert\\
&\lesssim \int_J\int_{\mathbb{R}^n}\vert u_h(y)\vert^2\Vert \vert\nabla\vert^{-1}u_h\Vert_{L^\frac{8}{3}}
\Vert \nabla u_l\Vert_{L^8}\Vert u_l\Vert_{L^4}^2dyds\\
&\lesssim \Vert u_h\Vert_{L^\infty L^2}^2\Vert \vert\nabla\vert^{-\frac{1}{2}}u_h\Vert_{L^2L^\frac{8}{3}}\Vert \nabla u_l\Vert_{L^2L^8}\Vert u_l\Vert_{L^\infty L^4}^2\\
&\lesssim \eta\varepsilon^5.
\end{split}
\end{equation}
As for the other part, using \eqref{FLIME-Proof-Step1} and \eqref{FLIME-Proof-XXXEstimates}, we get that
\begin{equation}\label{FLIME-Proof-MomBacket2NewEqt4}
\begin{split}
& \left\vert\int_J\int_{\mathbb{R}^{2n}}\frac{\vert u_h(y)\vert^2}{\vert x-y\vert}P_{< 1}\mathcal{O}\left( u_l^3\right)(x)u_h(x)dxdyds\right\vert\\
&\lesssim \Vert \vert u_h\vert^2\ast \vert x\vert^{-1}\Vert_{L^3L^{24}}\Vert u_h\Vert_{L^3L^\frac{8}{3}}\Vert u_l\Vert_{L^3L^{12}}\Vert u_l\Vert_{L^\infty L^4}^2\\
&\lesssim \varepsilon^\frac{16}{3}\eta^\frac{4}{3}.
\end{split}
\end{equation}
Now, we treat the case $k=1$ using Bernstein property \eqref{BernSobProp}, \eqref{FLIME-ProofPreliminaryEst1}, \eqref{FLIME-Proof-Step1} and \eqref{FLIME-Proof-XXXEstimates} as follows. First we write that
\begin{equation}\label{FLIME-Proof-MomBacket2NewEqt5}
\begin{split}
 &\left\vert\int_J\int_{\mathbb{R}^{2n}} \vert u_h(y)\vert^2 \frac{(x-y)_j}{\vert x-y\vert}u_h(x)\partial_j\left(P_{< 1} \mathcal{O}\left(u_l^2u_h\right)\right)(x)dx dy ds\right\vert\\
&\lesssim \Vert u_h\Vert_{L^\infty L^2}^2\Vert u_h\Vert_{L^3L^\frac{8}{3}}\Vert u_l^2u_h\Vert_{L^\frac{3}{2}L^\frac{8}{5}}\\
&\lesssim \Vert u_h\Vert_{L^\infty L^2}^2\Vert u_h \Vert_{L^3 L^\frac{8}{3} }^2 \Vert u_l\Vert_{L^6L^8}^2 \\
&\lesssim \varepsilon^4\eta^\frac{4}{3},
\end{split}
\end{equation}
and then we write that
\begin{equation}\label{FLIME-Proof-MomBacket2NewEqt6}
\begin{split}
 &\left\vert\int_J\int_{\mathbb{R}^{2n}}\frac{\vert u_h(y)\vert^2}{\vert x-y\vert}u_h(x)P_{< 1} \mathcal{O}\left(u_l^2u_h\right)(x)dx dy ds\right\vert\\
&\lesssim \Vert\vert u_h\vert^2\ast\vert x\vert^{-1}\Vert_{L^3L^{24}}\Vert u_h\Vert_{L^3L^\frac{8}{3}}^2\Vert u_l\Vert_{L^\infty L^\frac{48}{5}}^2\\
&\lesssim \varepsilon^\frac{10}{3}\eta^2.
\end{split}
\end{equation}
When $k=2$, we use the Bernstein's properties \eqref{BernSobProp}, \eqref{FLIME-ProofPreliminaryEst1}, \eqref{FLIME-Proof-Step1}, and \eqref{FLIME-Proof-XXXEstimates} to get
\begin{equation}\label{FLIME-Proof-MomBacket2NewEqt7}
\begin{split}
 &\left\vert\int_J\int_{\mathbb{R}^{2n}} \vert u_h(y)\vert^2 \frac{(x-y)_j}{\vert x-y\vert}u_h(x)\partial_j\left(P_{< 1} \mathcal{O}\left(u_lu_h^2\right)\right)(x)dx dy ds\right\vert\\
&\lesssim \Vert u_h\Vert_{L^\infty L^2}^2\Vert u_h\Vert_{L^3L^\frac{8}{3}}\Vert \partial_jP_{< 1}\mathcal{O}\left(u_lu_h^2\right)\Vert_{L^\frac{3}{2}L^\frac{8}{5}}\\
&\lesssim \Vert u_h\Vert_{L^\infty L^2}^2\Vert u_h\Vert_{L^3L^\frac{8}{3}}\Vert u_h^2u_l\Vert_{L^\frac{3}{2}L^\frac{8}{6}}\\
&\lesssim \Vert u_h\Vert_{L^\infty L^2}^2\Vert u_h\Vert_{L^3L^\frac{8}{3}}^3\Vert u_l\Vert_{L^\infty L^\infty}\\
&\lesssim \varepsilon^3\eta^2,
\end {split}
\end{equation}
and
\begin{equation}\label{FLIME-Proof-MomBacket2NewEqt8}
\begin{split}
 &\left\vert\int_J\int_{\mathbb{R}^{2n}}\frac{\vert u_h(y)\vert^2}{\vert x-y\vert}u_h(x)P_{< 1}\mathcal{O}\left( u_lu_h^2\right)(x)dx dy ds\right\vert\\
&\lesssim \Vert \vert u_h\vert^2\ast\vert x\vert^{-1}\Vert_{L^3L^{24}}\Vert u_h\Vert_{L^3L^\frac{8}{3}}\Vert P_{< 1}\mathcal{O}\left( u_lu_h^2\right)\Vert_{L^3L^\frac{12}{7}}\\
&\lesssim \Vert u_h\Vert_{L^6L^\frac{24}{11}}^2\Vert u_h\Vert_{L^3L^\frac{8}{3}}\Vert P_{< 1}\mathcal{O}\left(u_lu_h^2\right)\Vert_{L^3L^1}\\
&\lesssim \Vert u_h\Vert_{L^6L^\frac{24}{11}}^2\Vert u_h\Vert_{L^3L^\frac{8}{3}}^2\Vert u_h\Vert_{L^\infty L^2}\Vert u_l\Vert_{L^\infty L^8}\\
&\lesssim \varepsilon^\frac{10}{3}\eta^2.
\end{split}
\end{equation}
Finally, the case $k=3$ is treated as follows using the Bernstein's properties \eqref{BernSobProp}, \eqref{FLIME-ProofPreliminaryEst1}, \eqref{FLIME-Proof-Step1}, and \eqref{FLIME-Proof-XXXEstimates}
\begin{equation}\label{FLIME-Proof-MomBacket2NewEqt9}
\begin{split}
 &\left\vert\int_J\int_{\mathbb{R}^{2n}} \vert u_h(y)\vert^2 \frac{(x-y)_j}{\vert x-y\vert}u_h(x)\partial_j\left(P_{< 1} \mathcal{O}\left(u_h^3\right)\right)(x)dx dy ds\right\vert\\
 &\lesssim \Vert u_h\Vert_{L^\infty L^2}^2\Vert u_h\Vert_{L^3L^\frac{8}{3}}\Vert \nabla P_{< 1}\mathcal{O}\left(u_h^3\right)\Vert_{L^\frac{3}{2}L^\frac{8}{5}}\\
&\lesssim \Vert u_h\Vert_{L^\infty L^2}^2\Vert u_h\Vert_{L^3L^\frac{8}{3}}\Vert u_h^3\Vert_{L^\frac{3}{2}L^1}\\
&\lesssim \Vert u_h\Vert_{L^\infty L^2}^2\Vert u_h\Vert_{L^3L^\frac{8}{3}}\Vert u_h\Vert_{L^\frac{9}{2} L^3}^3\\
&\lesssim \varepsilon^2\eta^2
\end{split}
\end{equation}
and, similarly,
\begin{equation}\label{FLIME-Proof-MomBacket2NewEqt10}
\begin{split}
 &\left\vert\int_J\int_{\mathbb{R}^{2n}}\frac{\vert u_h(y)\vert^2}{\vert x-y\vert}u_h(x)P_{< 1}\mathcal{O}\left( u_h^3\right)(x)dx dy ds\right\vert\\
&\lesssim \Vert \vert u_h\vert^2\ast \vert x\vert^{-1}\Vert_{L^3L^{24}}\Vert u_h\Vert_{L^3L^\frac{8}{3}}\Vert P_{< 1}\mathcal{O}\left(u_h^3\right)\Vert_{L^3L^\frac{12}{7}}\\
&\lesssim \Vert u_h\Vert_{L^6L^\frac{24}{11}}\Vert u_h\Vert_{L^3L^\frac{8}{3}}\Vert P_{< 1}\mathcal{O}\left(u_h^3\right)\Vert_{L^3L^1}\\
&\lesssim \Vert u_h\Vert_{L^6L^\frac{24}{11}}^2\Vert u_h\Vert_{L^3L^\frac{8}{3}}^2\Vert u_h\Vert_{L^\infty L^\frac{16}{5}}^2\\
&\lesssim \varepsilon^\frac{10}{3}\eta^2.
\end{split}
\end{equation}
This finishes the analysis of $\tilde{\mathcal{R}}$.
The first error term in \eqref{FLIME-Proof-MomBacketNewEqt1} is now easy to treat. Indeed, integrating by parts,
\begin{equation}\label{FLIME-Proof-MomBacket2NewEqt11}
\begin{split}
&\left\vert\int_J\int_{\mathbb{R}^{2n}}\vert u_h(s,y)\vert^2\frac{(x-y)}{\vert x-y\vert}\nabla\left(\vert u\vert^4-\vert u_l\vert^4-\vert u_h\vert^4\right)(s,x)dxdyds\right\vert\\
&\le\sum_{k=1}^3\mathcal{O}\int_J\int_{\mathbb{R}^{2n}}\frac{\vert u_h(s,y)\vert^2\vert u_h(s,x)\vert^{4-k}\vert u_l(s,x)\vert^k}{\vert x-y\vert}dxdyds\\
&\lesssim \eqref{FLIME-Proof-MomBacketNewEqt6}+ \eqref{FLIME-Proof-MomBacketNewEqt8}+\eqref{FLIME-Proof-MomBacketNewEqt10}\\
&\lesssim \varepsilon^\frac{7}{3}\eta^\frac{4}{3}.
\end{split}
\end{equation}
Finally, with \eqref{FLIME-Proof-MomBacketNewEqt1}--\eqref{FLIME-Proof-MomBacket2NewEqt11}, we obtain \eqref{FLIME-Proof-MomBacketClaim}.
As a consequence of \eqref{FLIME-IntMorEst1}, \eqref{FLIME-Proof-MBacketClaim} and \eqref{FLIME-Proof-MomBacketClaim} on $J=J(2)$, we have that
\begin{equation}\label{FLIME-PRoof-Fin}
 \begin{split}
  \Vert \vert\nabla\vert^{-\frac{3}{2}}\vert u_h\vert^2\Vert_{L^2L^2}^2\lesssim \eta^2\left(\varepsilon^\frac{7}{3}\eta^{-\frac{2}{3}}+\varepsilon^2+\varepsilon^\frac{16}{3}\eta^{-\frac{4}{3}}\right) \le \eta^2,
 \end{split}
\end{equation}
if $\varepsilon>0$ is sufficiently small, and $\eta>\varepsilon$. Letting $\eta=\varepsilon^\frac{1}{2}$, we obtain
$J(2)\subset J(1)$. Finally, $J(1)$ is a closed, open nonempty subset of $\mathbb{R}$.
Hence $J(1)=\mathbb{R}$, and this finishes the proof.
\end{proof}

It follows from H\"older's inequality that in the situation of Proposition \ref{FLIME-FLIMEProp}, one also has the
estimates \eqref{FLIME-Proof-XXXEstimates} with $\eta=\varepsilon$.

\section{The Soliton case}\label{Section-Soliton}

In this section, we deal with the first scenario in Theorem \ref{3S-H2CritThm3S}, namely the soliton case. We prove that the soliton scenario is inconsistent with the frequency-localized Morawtez interaction estimates developed in Section \ref{Section-IME} and compactness up to rescaling.

\begin{proposition}\label{Sol-NoSolitonProp}
 Let $u\in C(\mathbb{R},\dot{H}^2)$ be a solution of \eqref{CubicFOS} such that $K=\{u(t):t\in\mathbb{R}\}$ is precompact in $\dot{H}^2$ up to translation. If $n=8$, then $u=0$. In particular the soliton scenario in Theorem \ref{3S-H2CritThm3S} does not hold true.
\end{proposition}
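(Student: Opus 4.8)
The plan is to argue by contradiction, exactly as in the self-similar and later scenarii: assume $u\neq 0$ is a soliton-like solution with $I=\mathbb{R}$ and $h(t)\equiv 1$, and derive a contradiction from the frequency-localized interaction Morawetz estimate of Proposition \ref{FLIME-FLIMEProp} together with compactness of $K=\{u(t):t\in\mathbb{R}\}$ up to translation. First I would record the quantitative consequences of compactness: there is a continuous function $x:\mathbb{R}\to\mathbb{R}^n$ such that $\{u(t,\cdot+x(t)):t\in\mathbb{R}\}$ is precompact in $\dot H^2$, hence for every $\varepsilon>0$ there is $C(\varepsilon)$ with $\int_{|x-x(t)|\geq C(\varepsilon)}\bigl(|\Delta u|^2+|u|^4\bigr)(t,x)\,dx<\varepsilon$ for all $t$; moreover $\|u(t)\|_{\dot H^2}\simeq 1$ uniformly and the high-frequency piece $u_h=P_{\geq 1}u$ carries a definite fraction of this norm (after the normalization in Proposition \ref{FLIME-FLIMEProp}, in fact one uses that $u_h$ is nontrivial uniformly in $t$, since $h\equiv 1\leq 1$ places us in the hypotheses of that proposition).

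Next I would exploit the left-hand side of \eqref{FLIME-IntMorEst1}, or rather its frequency-localized avatar embedded in the proof of Proposition \ref{FLIME-FLIMEProp}, on a time interval $[0,T]$. The key positive term is $\int_0^T\int_{\mathbb{R}^{2n}}|u_h(t,x)|^2|u_h(t,y)|^2|x-y|^{-5}\,dx\,dy\,dt$, which by Proposition \ref{FLIME-FLIMEProp} and the remark following it is $\lesssim\varepsilon^2$ uniformly in $T$. On the other hand, compactness up to translation forces the mass of $|u_h(t)|^2$ to remain concentrated in the ball $B(x(t),C(\varepsilon))$ with a fixed positive amount of $L^2$-mass $m_0>0$ outside a small $\varepsilon$-tail: more precisely $\int_{|x-x(t)|\leq R_0}|u_h(t,x)|^2\,dx\geq m_0/2$ for a fixed $R_0=R_0(\varepsilon)$ once $\varepsilon$ is small, using that $P_{\geq 1}$ is bounded and that the Sobolev embedding controls the $L^2$-mass of $u_h$ on a ball by its $\dot H^2$-norm there. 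Then for $|x-y|\leq 2R_0$ and both points near $x(t)$ one gets the pointwise lower bound $|x-y|^{-5}\gtrsim R_0^{-5}$, so
\begin{equation*}
\int_0^T\int_{\mathbb{R}^{2n}}\frac{|u_h(t,x)|^2|u_h(t,y)|^2}{|x-y|^5}\,dx\,dy\,dt\gtrsim \frac{m_0^2}{R_0^5}\,T.
\end{equation*}
Combining the two bounds gives $T\lesssim \varepsilon^2 R_0^5 m_0^{-2}$ for all $T>0$, which is absurd. Hence $u=0$.

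The main obstacle, and the point requiring the most care, is that Proposition \ref{FLIME-FLIMEProp} is stated for the full solution $u$ after a normalization $u\mapsto g_{(N,0)}u$ that fixes $P_{\leq 1}u$ to be $\dot H^2$-small; one must check that this normalization is compatible with the soliton hypothesis $h\equiv 1$ and does not destroy the uniform-in-$t$ lower bound on $\|u_h(t)\|_{\dot H^2}$ (equivalently, that after normalization $u$ is still not identically zero and still has compact orbit up to translation with a nondegenerate high-frequency part). Concretely: since $h(t)=1$, the rescaling $g_{(N,0)}$ just dilates once and for all, so the normalized solution is again a soliton with $h\equiv 1$, and $\|u_h(t)\|_{\dot H^2}\gtrsim \|u(t)\|_{\dot H^2}-\|P_{\leq 1}u(t)\|_{\dot H^2}\gtrsim 1-\varepsilon>0$; this is where one uses $E(u)=E_{max}>0$, i.e. $u\not\equiv 0$ with a definite energy. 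A secondary technical point is upgrading "$L^2$-mass of $u_h$ concentrated near $x(t)$" from the $\dot H^2$-compactness statement: this follows because $P_{\geq 1}$ gains integrability and $\dot H^2\hookrightarrow L^\infty$ in dimension $n=8$, so $\|u_h(t)\|_{L^2(|x-x(t)|\leq R_0)}$ is bounded below by a fixed constant once $R_0$ is large, by the same tail estimate applied to $u$ and the boundedness of $P_{\leq 1}$ on $L^2$. Once these two points are in place the Morawetz-versus-concentration dichotomy closes the argument immediately.
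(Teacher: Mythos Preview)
Your strategy is correct and is essentially the paper's own argument: combine the global-in-time bound from Proposition~\ref{FLIME-FLIMEProp} with a uniform-in-$t$ lower bound extracted from compactness of the orbit, and let $T\to\infty$. The only cosmetic difference is that the paper works with the quantity $\Vert\vert\nabla\vert^{-3/4}P_{\ge 1}u(t)\Vert_{L^4_x}$, which is bounded in $L^4_t(\mathbb{R})$ by Lemma~\ref{FLIME-SQRTEstimateLemma} applied to the first line of \eqref{FLIMEPropEst2}, and bounded below pointwise in $t$ by the continuity of $v\mapsto\Vert\vert\nabla\vert^{-3/4}P_{\ge 1}v\Vert_{L^4}$ on $\dot H^2$ together with $\Vert P_{\ge 1}v\Vert_{\dot H^2}>0$ on the closure of $K$. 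You instead use the convolution form $\int\vert u_h(x)\vert^2\vert u_h(y)\vert^2\vert x-y\vert^{-5}\,dx\,dy$ directly; this is equivalent (it is $\simeq\Vert\vert\nabla\vert^{-3/2}\vert u_h\vert^2\Vert_{L^2}^2$) and spares you the square-root lemma.

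One point to fix: in your last paragraph you invoke $\dot H^2\hookrightarrow L^\infty$ in dimension $n=8$, which is false (the critical embedding is $\dot H^2\hookrightarrow L^4$). The correct route to the $L^2$-concentration of $u_h$ is the one you also hint at: $P_{\ge 1}:\dot H^2\to L^2$ is bounded by Bernstein (frequencies are $\gtrsim 1$), so it sends the $\dot H^2$-compact closure $\bar K$ to a compact set in $L^2$. Since $\Vert P_{\ge 1}v\Vert_{\dot H^2}\gtrsim 1-\varepsilon>0$ on $\bar K$ by your normalization argument, each $P_{\ge 1}v$ is nonzero in $L^2$, and continuity of $v\mapsto\Vert P_{\ge 1}v\Vert_{L^2}$ on $\dot H^2$ gives a uniform lower bound $m_0>0$; tightness in $L^2$ then yields your $R_0$. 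With this correction the argument closes.
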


\begin{proof}
 Let $u\in C(\mathbb{R},\dot{H}^2)$ be a solution of \eqref{CubicFOS} of energy $E(u)>0$ such that 
 $K=\{g_{(1,y(t))}u(t):t\in\mathbb{R}\}$ is precompact in $\dot{H}^2$. In particular we can apply Proposition \ref{FLIME-FLIMEProp} with $\varepsilon>0$ and deduce that
\begin{equation}\label{Sol-ConsOfIME}
 \Vert \vert\nabla\vert^{-\frac{3}{4}}P_{\ge 1}u\Vert_{L^4(\mathbb{R},L^4)}\lesssim 1.
\end{equation}
Independently, by \eqref{FLIMEPropEst2}, we know that, for all $t$,
\begin{equation}\label{SolConsOfScal}
 \Vert P_{\ge 1}u(t)\Vert_{\dot{H}^2}^2\gtrsim_{E(u)} E(u)-\varepsilon^2>0,
\end{equation}
if $\varepsilon$ is sufficiently small. Then \eqref{SolConsOfScal} implies that for all 
$v$ in the $\dot{H}^2$-closure of $K$, $P_{\ge 1}v\ne 0$.
Since $K$ is precompact in $\dot{H}^2$ and the mapping $v\mapsto \Vert \vert\nabla\vert^{-\frac{3}{4}}P_{\ge 1}v\Vert_{L^4}$ is 
continuous on $\dot{H}^2$, we get that there exists $\kappa>0$ such that
\begin{equation}\label{FLIME-ConsOFComp}
\forall v\in K,\hskip.1cm\Vert\vert\nabla\vert^{-\frac{3}{4}}P_{\ge 1}v\Vert_{L^4}\ge\kappa.
\end{equation}
Now, \eqref{Sol-ConsOfIME} and \eqref{FLIME-ConsOFComp} imply that
\begin{equation}\label{Sol-ConsOfCompactness}
\kappa^4 t\lesssim \Vert \vert\nabla\vert^{-\frac{3}{4}}u_h\Vert_{L^4([0,t],L^4)}^4\lesssim 1.
\end{equation}
Letting $t\to +\infty$, we get a contradiction in \eqref{Sol-ConsOfCompactness}.
This finishes the proof of Proposition \ref{Sol-NoSolitonProp}.
\end{proof}

\section{The Low-to-high cascade}\label{Section-Cascade}

Now, we are ready to deal with the last scenario, and to exclude the case of a low-to-high cascade solution.
In order to do so, we use the estimates coming from the frequency-localized interaction Morawetz estimates developed in Section \ref{Section-IME} to control the action of the high-frequency part of $u$. Then the low-frequency part obeys an analogue of \eqref{CubicFOS} with initial data arbitrarily small. Hence one can make its $\dot{S}^2$-norm small, depending on the frequency, so as to prove that it is in fact small in $L^2$. Then the solution is an $H^2$ solution, and conservation of mass gives a contradiction. More precisely, we prove the following proposition.
\begin{proposition}\label{Cas-L2Regprop}
 Let $u\in C(I,\dot{H}^2)$ be a maximal lifespan solution of \eqref{CubicFOS} such that $K=\{g_{(h(t),x(t))}u(t):t\in I\}$ is precompact in $\dot{H}^2$ for some functions $h,x$ such that $h(t)\le h(0)=1$, and
\begin{equation}\label{Ca-HLCas}
 \liminf_{t\to \sup I}h(t)=0,
\end{equation}
then if $n=8$, we have that $u=0$. In particular, the low-to-high cascade scenario does not hold true.
\end{proposition}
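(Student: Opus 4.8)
The plan is to establish the ``unexpected'' $L^2$-regularity of $u$ announced in the introduction, and then to play it against conservation of mass and the cascade structure to force $u=0$. Fix $\varepsilon>0$ small. First I would invoke Proposition \ref{FLIME-FLIMEProp}: after replacing $u$ by $g_{(N,0)}u$ for a suitable dyadic $N$ we may assume the normalizations \eqref{FLIME-ProofPreliminaryEst1} and the conclusions \eqref{FLIMEPropEst2}, so that, writing $u_l=P_{<1}u$ and $u_h=P_{\ge 1}u$, we have on $I$
$$\|u_l\|_{\dot S^2}\lesssim\varepsilon,\quad \|u_h\|_{\dot S^{-\frac32}}\lesssim\varepsilon,\quad \||\nabla|^{-\frac32}|u_h|^2\|_{L^2L^2}\lesssim\varepsilon,\quad \|u_h\|_{L^\infty\dot H^s}\le\varepsilon\ \ (s\le\tfrac74),$$
together with the Strichartz bounds of \eqref{FLIME-Proof-XXXEstimates} with $\eta=\varepsilon$. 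The high-frequency part is then harmless: since $\sup_{N\ge1}N^{-7/2}=1$ one has $\|u_h(t)\|_{L^2}^2=\sum_{N\ge1}N^{-7/2}\|P_Nu(t)\|_{\dot H^{7/4}}^2\le\|u_h(t)\|_{\dot H^{7/4}}^2\le\varepsilon^2$ for all $t$, so $u_h\in L^\infty(I,L^2)$; moreover, along the sequence $t_k\to\sup I$ with $h(t_k)\to0$ the $\dot H^{7/4}$-mass of $u_h$ actually tends to $0$, because precompactness of $K$ and $h\le1$ force the $\dot H^2$-mass of $u(t_k)$ to concentrate near the frequency $1/h(t_k)\to\infty$, so that $\|u_h(t_k)\|_{\dot H^{7/4}}^2=\sum_{N\ge1}N^{-1/2}\|P_Nu(t_k)\|_{\dot H^2}^2\to0$ (split at $N\sim\rho/h(t_k)$ and use $E(u(t_k))=E(u(0))$). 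Thus $\|u_h(t_k)\|_{L^2}\to0$, and everything reduces to estimating the low-frequency part $u_l$ in $L^2$ — the delicate point, since $\dot H^2$-control says nothing about the $L^2$-size of $P_{<1}u$ near $\xi=0$.

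For the low frequencies I would argue as follows. The function $u_l$ solves $(i\partial_t+\Delta^2)u_l=-P_{<1}(|u|^2u)$, and the cascade forces $\|u_l(T)\|_{\dot H^2}=\|P_{<h(T)}(g(T)u(T))\|_{\dot H^2}\to0$ along a sequence $T\to\sup I$ (precompactness of $K$, $h(T)\to0$); hence $e^{i(t-T)\Delta^2}u_l(T)\to0$ in $\dot H^2$, and $u_l$ admits the ``from the endpoint'' Duhamel representation
$$u_l(t)=-i\int_t^{\sup I}e^{i(t-\sigma)\Delta^2}P_{<1}(|u|^2u)(\sigma)\,d\sigma,$$
which is the device that bypasses the need to know a priori that $u_l(0)\in L^2$. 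It then remains to bound $P_{<1}(|u|^2u)$ in $\dot{\bar{S}}^0(I)$. Splitting $|u|^2u$ according to the number of $u_h$- and $u_l$-factors exactly as in the proof of Proposition \ref{FLIME-FLIMEProp}, the $u_l$-heavy terms are dominated by positive powers of $\|u_l\|_{\dot S^2}\lesssim\varepsilon$, while the $u_h$-heavy terms are dominated using the interaction-Morawetz bound $\||\nabla|^{-3/2}|u_h|^2\|_{L^2L^2}\lesssim\varepsilon$, the negative-regularity bound $\|u_h\|_{\dot S^{-3/2}}\lesssim\varepsilon$, Lemma \ref{FLIME-SQRTEstimateLemma}, and the bounds \eqref{FLIME-Proof-XXXEstimates}; only the term with one $u_l$-factor leaves a contribution $\lesssim\varepsilon\|u_l\|_{\dot S^0(I)}$. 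This should give $\|P_{<1}(|u|^2u)\|_{\dot{\bar{S}}^0(I)}\lesssim\varepsilon^{3/2}+\varepsilon\|u_l\|_{\dot S^0(I)}$, and feeding this into the Strichartz estimate \eqref{Not-SE-StrichartzEstimatesWithGainII} applied to the Duhamel formula — run as a continuity/bootstrap argument starting from a short interval $[\sup I-\delta,\sup I)$ and extended across $I$ by the usual iteration — closes, for $\varepsilon$ small, to give $\|u_l\|_{\dot S^0(I)}\lesssim\varepsilon^{3/2}$. In particular $u_l(t)\in L^2$ for every $t\in I$, and $\|u_l(t_k)\|_{L^2}\le\|u_l\|_{\dot S^0([t_k,\sup I))}\to0$ by dominated convergence on the shrinking tails. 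Combining with the high-frequency analysis, $u(t)\in L^2$ for every $t\in I$ and $\|u(t_k)\|_{L^2}\to0$.

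The conclusion is then immediate. Since $u(t_0)\in L^2$ for some (hence every) $t_0\in I$ we have $u(t_0)\in H^2$, so by Proposition \ref{Not-BCC-ClaimOfLE} the mass $M(u(t))=\|u(t)\|_{L^2}^2$ is conserved on $I$. If $u\ne0$ then $E(u)>0$, hence $u(t_0)\ne0$ and $\|u(t_0)\|_{L^2}>0$; but then $\|u(t_k)\|_{L^2}=\|u(t_0)\|_{L^2}>0$ for all $k$, contradicting $\|u(t_k)\|_{L^2}\to0$. Therefore $u=0$, and the low-to-high cascade scenario of Theorem \ref{3S-H2CritThm3S} cannot occur. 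The main obstacle is the additional-regularity step of the second paragraph: beyond the (routine) weak-limit argument producing the endpoint Duhamel formula, one must carry out the trilinear estimates across the $7/4$-derivative mismatch between the control available ($\dot H^2$, hence $\dot S^{-3/2}$ on $u_h$ and $\dot S^2$ on $u_l$) and the $\dot S^0$-regularity sought, which forces the full use of the frequency-localized interaction Morawetz estimate and of Lemma \ref{FLIME-SQRTEstimateLemma}, in the same spirit as — and one notch beyond — the computations of Section \ref{Section-FLIME}.
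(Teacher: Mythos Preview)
Your overall architecture---prove $u\in L^\infty(I,L^2)$, then use conservation of mass together with the cascade to force $u=0$---is exactly the paper's. The divergence is in how you obtain the $L^2$-regularity of the low-frequency part, and that step in your proposal does not close as written.

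First a minor point: you run the bootstrap ``starting from a short interval $[\sup I-\delta,\sup I)$''. But by Proposition~\ref{SS-NoSelfSimilarSolutionProp} (which the paper invokes at the very start of the proof, and which you should too) one has $I=\mathbb{R}$, so $\sup I=+\infty$ and there is no such interval. More seriously, the continuity argument you have in mind needs a seed: some interval on which $\|u_l\|_{\dot S^0}$ is \emph{finite}. Nothing in your hypotheses provides this---$u_l\in\dot S^2$ says nothing about $L^2$ near $\xi=0$, and the endpoint Duhamel formula (which you correctly justify in $\dot H^2$) does not by itself place the integral in $L^2$ until you already know the forcing is in $\dot{\bar S}^0$. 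Your own estimate $\|P_{<1}(|u|^2u)\|_{\dot{\bar S}^0}\lesssim\varepsilon^{3/2}+\varepsilon\|u_l\|_{\dot S^0}$ makes the circularity explicit: the right-hand side is infinite until the left-hand side is finite.

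The deeper obstacle is the trilinear estimate itself. The $\dot{\bar S}^0$-norm carries a \emph{negative} weight at low frequencies (for the pair $(2,8/3)$ it is $(\sum_N N^{-2}\|P_N h\|_{L^2L^{8/5}}^2)^{1/2}$), so Bernstein goes the wrong way: to sum over $N\le 1$ you need $\|P_N(|u|^2u)\|_{L^2L^{8/5}}\lesssim N^{1+}$. For the purely low-frequency piece $P_N(u_l^3)$, the only way to extract such decay from two factors in $\dot S^2$ is to place the third in $\dot S^0$---which is the unknown. Likewise the $u_h$-heavy outputs at frequency $N\ll 1$ arise from high-high cancellations and are not controlled by $\||\nabla|^{-3/2}|u_h|^2\|_{L^2L^2}$ alone with the required $N$-decay. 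The paper avoids all of this by never leaving positive-regularity norms: it proves the \emph{scale-by-scale} bound $\|P_{\le M}u\|_{\dot S^2}\lesssim M^3$ for every dyadic $M\le 1$, via a nested bootstrap (in the time interval and in the frequency parameter $m=M_0^{10}$), seeded by the cascade through $\|P_{\le 1}u(t_0)\|_{\dot H^2}\le\kappa^2 m$. Here Bernstein gains the factor $M^3$ (positive powers), and only afterwards does one convert to $L^2$ via $\|P_{\le M}u\|_{L^\infty L^2}\lesssim\sum_{N\le M}N^{-2}\|P_{\le N}u\|_{\dot S^2}\lesssim M$, which is summable. That quantitative $M^3$ decay is precisely what your single-shot $\dot S^0$ argument is missing.
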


\begin{proof}
 Let $u$ be as above. Applying Proposition \ref{SS-NoSelfSimilarSolutionProp}, we see that $I=\mathbb{R}$, and since $h\le 1$, given $\varepsilon>0$, we can apply Proposition \ref{FLIME-FLIMEProp} to get that \eqref{FLIMEPropEst2} holds true. We may also suppose that \eqref{FLIME-Proof-Step1} holds true.
As a first step in the proof, we claim that if $\varepsilon>0$ is suficiently small, the following holds true for all dyadic number $M\le 1$:
\begin{equation}\label{Cas-GainOfDerivative1}
 \Vert P_{\le M}u\Vert_{\dot{S}^2}\lesssim M^3.
\end{equation}
Fix $M_0$, a dyadic number, let $m=M_0^{10}$ and let $\kappa>0$ to be chosen later. 
Since we know that \eqref{Ca-HLCas} holds true and that $K$ is precompact, using \eqref{Cas-GainOfDerivative2} we get that there exists $t_0>0$ such that
\begin{equation}\label{Cas-GainOfDerivative3}
 \Vert P_{\le 1}u(t_0)\Vert_{\dot{H}^2}\le \kappa^2 m.
\end{equation}
We claim that for any $C>0$, if $\kappa$ is sufficiently small, independently of $m$,
then we have that, for all dyadic numbers $M\in [m, 1]$,
\begin{equation}\label{Cas-GainOfDerivative4}
 \Vert P_{\le M}u\Vert_{\dot{S}^2(J)}\le \kappa C\left(m+M^3\right)
\end{equation}
when $J$ is small and $t_0\in J$. Indeed, using the Bernstein's properties \eqref{BernSobProp}, we get that, in $J$,
\begin{equation*}\label{Cas-GainOfDerivative5}
 \begin{split}
  \Vert P_{\le M}u\Vert_{\dot{S}^2}^2 &\lesssim \sum_{N\le M}N^4\Vert P_Nu\Vert_{L^\infty L^2}^2+\sum_{N\le M}N^6\Vert P_Nu\Vert_{L^2 L^\frac{8}{3}}^2\\
&\lesssim \sum_{N\le M}N^4\Vert P_Nu(t_0)\Vert_{L^2}^2+\vert J\vert^2\sum_{N\le M}N^4\Vert\partial_tP_N\Vert_{L^\infty L^2}^2\\
&+\vert J\vert\sum_{N\le M}N^6\Vert P_Nu\Vert_{L^\infty L^\frac{8}{3}}^2\\
&\lesssim \kappa^4 m^2+\vert J\vert^2\sum_{N\le M}N^4\Vert P_N\Delta^4 u\Vert_{L^\infty L^2}^2\\
&+\vert J\vert^2\sum_{N\le M}N^4\Vert P_N\left(\vert u\vert^2u\right)\Vert_{L^\infty L^2}^2+\vert J\vert\sum_{N\le M}N^8\Vert P_Nu\Vert_{L^\infty L^2}^2\\
&\lesssim_{E(u)} \kappa^4 m^2+M^8\vert J\vert^2+\vert J\vert^2\sum_{N\le M}N^8\Vert \vert u\vert^2u\Vert_{L^\infty L^\frac{4}{3}}^2 +\vert J\vert M^4\\
&\lesssim_{E(u)}\kappa^4 m^2+M^8\vert J\vert^2+M^4\vert J\vert,\\
 \end{split}
\end{equation*}
and if $\vert J\vert\lesssim_{E(u),C}\kappa$, then \eqref{Cas-GainOfDerivative4} holds true. Now, let $J(C)$ be the maximum interval containing $t_0$ on which \eqref{Cas-GainOfDerivative4} holds true for the constant $C>0$. We prove that $J(2)\subset J(1)$ if $\kappa$ and $\varepsilon$ are chosen sufficiently small, independently of $m$.
Indeed, let 
\begin{equation*}
u_{vlow}=P_{\le m}u\hskip.1cm,\hskip.1cm\hbox{and}\hskip.1cm u_{med}=P_{m<\cdot< 1}u.
\end{equation*}
In the following, all time integrals are taken on $J=J(2)$. Applying Strichartz estimates \eqref{Not-SE-StrichartzEstimatesWithGainII}, we get that
\begin{equation}\label{Cas-GainOfDerivative6}
 \begin{split}
  &\Vert P_{\le M}u\Vert_{\dot{S}^2}\\
  &\lesssim \Vert P_{\le M}u(t_0)\Vert_{\dot{H}^2}+\Vert\nabla P_{\le M}\vert u_{vlow}\vert^2 u_{vlow}\Vert_{L^2L^\frac{8}{5}}\\
&+\Vert \nabla P_{\le M}\left(\vert u\vert^2u-\vert u_{vlow}\vert^2u_{vlow}\right)\Vert_{L^2L^\frac{8}{5}}\\
&\lesssim \kappa m +\Vert P_{\le m}u\Vert_{\dot{S}^2}^3+M\Vert \tilde{P}_{\le M}\left(\vert u_{vlow}\vert^2 \vert u_{med}\vert+\vert u_{vlow}\vert^2 \vert u_h\vert\right)\Vert_{L^2L^\frac{8}{5}}\\
&+M\Vert \tilde{P}_{\le M}\vert u_{med}\vert^3\Vert_{L^2L^\frac{8}{5}}+M\Vert \tilde{P}_{\le M}\vert u_h\vert^3\Vert_{L^2L^\frac{8}{5}},
 \end{split}
\end{equation}
where $\tilde{P}_{\le M}$ is the convolution operator whose kernel is
\begin{equation*}
\tilde{k}(x)=M^{8}\hat{\psi}(Mx)^2,
\end{equation*}
where $\psi$ is as in \eqref{DefLitPalOp}.  We remark that $\tilde{P}_{\le M}$ has nonnegative kernel and satisfies estimates similar to those of $P_{\le M}$. In particular, \eqref{BernSobProp} holds true with $\tilde{P}_{\le M}$ in place of $P_{\le M}$.
By assumption we have that
\begin{equation}\label{Cas-GainOfDerivative7}
\begin{split}
 \Vert P_{\le m}u\Vert_{\dot{S}^2}^3\le \left(4\kappa\right)^3m^3.
\end{split}
\end{equation}
Besides, using the Bernstein's properties \eqref{BernSobProp}, and the assumption on $J$, we get that
\begin{equation}\label{Cas-GainOfDerivative8}
 \begin{split}
  M\Vert \tilde{P}_{\le M}\vert u_{vlow}^2u_{med}\vert\Vert_{L^2L^\frac{8}{5}}
&\lesssim M\Vert u_{vlow}\Vert_{L^\infty L^4}^2\Vert u_{med}\Vert_{L^2L^8}\\
&\lesssim M\left(4\kappa m\right)^2\left(\sum_{m<N< 1} N^{-1}\Vert\vert\nabla\vert P_Nu\Vert_{L^2L^8}\right)\\
&\lesssim M\left(\kappa m\right)^2\left(\sum_{m<N< 1} N^{-1}\Vert P_{\le 2N}u\Vert_{\dot{S}^2}\right)\\
&\lesssim Mm^2\kappa^3\left(\sum_{m<N< 1}N^{-1}\left(m+N^3\right)\right)\\
&\lesssim m^2\kappa^3M
 \end{split}
\end{equation}
and, similarly, using the Bernstein's properties, \eqref{BernSobProp} and \eqref{FLIME-Proof-XXXEstimates}, we have that
\begin{equation}\label{Cas-GainOfDerivative9}
 \begin{split}
  M\Vert \tilde{P}_{\le M}\vert u_{vlow}^2u_{h}\vert\Vert_{L^2L^\frac{8}{5}}
&\lesssim M^2\Vert u_{vlow}^2u_h\Vert_{L^2L^\frac{8}{6}}\\
&\lesssim M^2\Vert u_{vlow}\Vert_{L^4 L^8}^2\Vert u_h\Vert_{L^\infty L^2}\\
&\lesssim \kappa^2 M^2m^2\varepsilon.
 \end{split}
\end{equation}
Independently, using the Bernstein's properties \eqref{BernSobProp} and the definition of $J$, we get that
\begin{equation}\label{Cas-GainOfDerivative10}
\begin{split}
 M\Vert \tilde{P}_{\le M}\vert u_{med}\vert^3\Vert_{L^2L^\frac{8}{5}}
&\lesssim M^3 \Vert u_{med}^3\Vert_{L^2L^\frac{8}{7}}\\
&\lesssim M^3\left(\sum_{m<N< 1}N^{-1}\Vert \nabla P_Nu\Vert_{L^6L^\frac{24}{7}}\right)^3\\
&\lesssim M^3\left(\sum_{m<N< 1}N^{-1}\Vert P_{\le 2N}\Vert_{\dot{S}^2}\right)^3\\
&\lesssim M^3\left(2\kappa\sum_{m<N\le 1}N^{-1}m+N^2\right)^3\\
&\lesssim \left( 2\kappa\right)^3M^3,
\end{split}
\end{equation}
and, using again the Bernstein's properties \eqref{BernSobProp} and \eqref{FLIMEPropEst2}, we obtain that
\begin{equation}\label{Cas-GainOfDerivative11}
\begin{split}
 M\Vert\tilde{P}_{\le M}\vert u_h\vert^3\Vert_{L^2L^\frac{8}{5}}&\lesssim M^4\Vert u_h^3\Vert_{L^2L^1}\\
&\lesssim M^4\Vert\vert\nabla\vert^{-\frac{1}{2}}u_h\Vert_{L^2L^\frac{8}{3}}\Vert\vert\nabla\vert^\frac{7}{4} u_h\Vert_{L^\infty L^2}^2\\
& \lesssim M^4\varepsilon^3.
\end{split}
\end{equation}
Finally, with \eqref{Cas-GainOfDerivative2}--\eqref{Cas-GainOfDerivative11}, we get, if $\kappa=\varepsilon$ and $\varepsilon$ is sufficiently small, that there holds that
\begin{equation}\label{Cas-GainOfDerivative12}
\begin{split}
\Vert P_{\le M}u\Vert_{\dot{S}^2}\le \kappa\left( m+M^3\right).
\end{split}
\end{equation}
In particular, $J(2)\subset J(1)$. Consequently, $J(1)$ is a closed, open nonempty subset of $\mathbb{R}$. Hence $J(1)=\mathbb{R}$. Then \eqref{Cas-GainOfDerivative12} gives \eqref{Cas-GainOfDerivative1} for $M\in (M_0^{10},1)$. Since $M_0$ can be chosen arbitrarily small, we get \eqref{Cas-GainOfDerivative1} for all $M\le 1$.
Now, we finish the proof of Proposition \ref{Cas-L2Regprop}. A consequence of \eqref{Cas-GainOfDerivative1} is that $u\in L^\infty L^2$. Indeed, by the Bernstein's properties \eqref{BernSobProp}, $P_{\ge M}u\in L^\infty L^2$ for any dyadic $M$, and using \eqref{Cas-GainOfDerivative1}, we get that, when $M\le 1$,
\begin{equation}\label{Cas-EndOfProof1}
 \begin{split}
  \Vert P_{\le M}u\Vert_{L^\infty L^2}&\le \sum_{N\le M}\Vert P_Nu\Vert_{L^\infty L^2}\\
&\lesssim \sum_{N\le M} N^{-2}\Vert P_{\le N}u\Vert_{\dot{S}^2}\\
&\lesssim \sum_{N\le M} N\lesssim M.
 \end{split}
\end{equation}
Now, let $M>0$ be an arbitrarily small dyadic number.
Since \eqref{Ca-HLCas} holds true, and since $K$ is precompact in $\dot{H}^2$, we can find $t_0$ such that 
\begin{equation}\label{Cas-EndOfProof2}
\begin{split} 
\Vert P_{M<\cdot \le M^{-1}}u(t_0)\Vert_{L^2}&\le M^{-2}\Vert P_{M<\cdot \le M^{-1}}u(t_0)\Vert_{\dot{H}^2}\\
&\le M^{-2}\Vert P_{Mh(t_0)<\cdot\le M^{-1}h(t_0)}\left(g(t_0)u(t_0)\right)\Vert_{\dot{H}^2}\\
&\le M.
\end{split}
\end{equation}
Using conservation of mass, the Bernstein's properties \eqref{BernSobProp}, \eqref{Cas-EndOfProof1} and \eqref{Cas-EndOfProof2}, we deduce that
\begin{equation}\label{Cas-EndOfProof3}
\begin{split}
\Vert u(0)\Vert_{L^2}&=\Vert u(t_0)\Vert_{L^2}\\
&\le \Vert P_{> M^{-1}}u(t_0)\Vert_{L^2}+\Vert P_{M<\cdot\le M^{-1}}u(t_0)\Vert_{L^2}+\Vert P_{\le M}u\Vert_{L^\infty L^2}\\
&\le M^2E(u)^\frac{1}{2}+2M.
\end{split}
\end{equation}
Since $M$ is arbitrary, we get that $u(0)=0$. This concludes the proof of Proposition \ref{Cas-L2Regprop}.
\end{proof}

\section{Analiticity of the flow map and scattering}\label{Section-Scattering}

In view of Theorem \ref{IP-IPTheorem} and Corollary \ref{3S-Cor}, we can
finish the proof of the first assertions in Theorem \ref{MainThm} with Proposition \ref{Scat-Anal} below.

\begin{proposition}\label{Scat-Anal}
Let $n\le 8$. Then, for any $t>0$, the mapping $u_0\mapsto u(t)$, from $H^2$ into $H^2$, is analytic.
\end{proposition}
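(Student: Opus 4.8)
The plan is to deduce analyticity of $u_0 \mapsto u(t)$ from the global Strichartz bounds already established, by running a local analytic fixed-point argument and then propagating analyticity from short time intervals to all of $[0,t]$ by composition. Since analyticity is a local notion, I would fix $u_0 \in H^2$ and $t > 0$ and prove analyticity only on a small $H^2$-neighbourhood of $u_0$. Throughout I would view $H^2$ and the relevant Strichartz spaces as real Banach spaces (equivalently, complexify and treat $u$ and $\bar u$ as independent variables), so that the cubic nonlinearity $N(u) = \vert u\vert^2 u = \mathcal{O}(u^3)$ is a bounded symmetric trilinear — hence entire — map; the trilinear estimate needed is precisely the one already used in proving Propositions \ref{Not-BCC-ClaimOfLE} and \ref{Not-S-StabProp2}.

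First I would record the uniform global Strichartz control: for any $R, T > 0$ there is $C(R,T)$ such that $\Vert u\Vert_{\dot{S}^2([0,T]) \cap \dot{S}^0([0,T])} \le C(R,T)$ whenever $\Vert u_0\Vert_{H^2} \le R$. For $n \le 7$ the equation is subcritical and this is the standard local theory in the appropriate (subcritical) Strichartz spaces combined with the $H^2$ a priori bound from conservation of mass and energy; moreover $[0,t]$ may be split into finitely many subintervals on each of which the free evolution has small Strichartz norm. For $n=8$ the global finiteness of $\Vert u\Vert_{\dot{S}^2(\mathbb{R})}$ is Corollary \ref{3S-Cor}, which Proposition \ref{Not-BCC-ClaimOfLE} upgrades on $[0,T]$ to finiteness of the $\dot{S}^0$-component; the corresponding partition $0 = t_0 < t_1 < \dots < t_m = t$ — chosen so that $\Vert e^{i(\cdot - t_{j-1})\Delta^2} u(t_{j-1})\Vert_{W(I_j)}$ lies below half the threshold $\delta$ of Proposition \ref{Not-BCC-ClaimOfLE} on each $I_j = [t_{j-1},t_j]$ — exists because the finite $\dot{S}^2$-norm may be spread over finitely many intervals. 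In both cases, by the continuous-dependence and stability statements in Propositions \ref{Not-BCC-ClaimOfLE}--\ref{Not-S-StabProp2}, the same partition works, with threshold $\delta$ in place of $\delta/2$, for every $v_0$ in a sufficiently small $H^2$-neighbourhood of $u_0$.

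Next, on a single subinterval $I_j$, I would consider the Duhamel map $\Phi(w,w_0)(\tau) = e^{i(\tau - t_{j-1})\Delta^2} w_0 + i\int_{t_{j-1}}^{\tau} e^{i(\tau - s)\Delta^2} N(w)(s)\, ds$ on $X_j = \dot{S}^2(I_j) \cap \dot{S}^0(I_j)$, observe that it is affine in $w_0$ and a bounded cubic polynomial in $w$, hence jointly analytic in $(w,w_0)$, and that under the smallness arranged in Step 1 it is a $\tfrac12$-contraction in $w$ on a small ball, so that $I - D_w\Phi$ is invertible at the fixed point. The analytic implicit function theorem in Banach spaces then gives that the solution map $w_0 \mapsto S_j(w_0) \in X_j$ is analytic near the relevant data point, and composing with the bounded linear evaluation $w \mapsto w(t_j)$ and with the embedding $X_j \hookrightarrow C(I_j, H^2)$ gives analyticity into $H^2$. (Equivalently, one may expand $S_j(w_0)$ as the absolutely convergent sum over ternary trees produced by Picard iteration, which displays the power series in $(w_0, \bar w_0)$ directly.) Then, since a composition of analytic maps between Banach spaces is analytic, chaining $v_0 \mapsto v(t_1) \mapsto v(t_2) \mapsto \cdots \mapsto v(t)$, and stopping at the appropriate subinterval for a general intermediate time, yields analyticity of $v_0 \mapsto v(\tau)$ for every $\tau \in [0,t]$ on a neighbourhood of $u_0$; as $u_0$ and $t$ were arbitrary, the proposition follows.

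The main obstacle — the only genuinely non-soft point — is Step 1 in the critical dimension $n = 8$: there one cannot take the subintervals short, so the partition of $[0,t]$ must be dictated by the a priori finiteness of the $Z$-norm (equivalently $\dot{S}^2$-norm) of the already-constructed global solution supplied by Corollary \ref{3S-Cor}, and its robustness under perturbing the initial data has to be read off from the stability statement of Proposition \ref{Not-S-StabProp2}. In the range $n \le 7$ everything reduces to the standard short-time contraction argument, and the globalization is immediate from the $H^2$ a priori bound.
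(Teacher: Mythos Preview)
Your proposal is correct and follows essentially the same route as the paper: partition $[0,t]$ into finitely many subintervals on which a suitable Strichartz norm is small, apply the analytic implicit function theorem to the Duhamel fixed-point map on each piece, and propagate by composition. The paper's version is slightly terser---it partitions according to smallness of $\Vert \nabla u\Vert_{L^{\frac{n+4}{2}}(I,L^{\frac{n(n+4)}{3n+4}})}$ for the already-constructed solution rather than the free evolution, and it singles out $1\le n\le 3$ for an even more elementary treatment---but the mechanism is the same, and your extra care in invoking Proposition~\ref{Not-S-StabProp2} to make the partition robust under perturbation of $u_0$ is a point the paper leaves implicit.
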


\begin{proof}
We follow arguments developed in Pausader and Strauss \cite{PauStr} for the fourth-order wave equation.
We use the implicit function theorem.
In case $1\le n\le 3$, the global bound on the energy gives a global bound on the $L^\infty$-norm of $u$, and hence, the nonlinear term is lipschitz. In this case the problem can be solved with basic arguments. Now we treat the case $n\ge 4$. We divide $[0,t]=\cup_{j=1}^k I_j$ into subintervals $I_j=[a_j,a_{j+1}]$ such that
\begin{equation}\label{Scat-CondForAnal}
\Vert \nabla u\Vert_{L^\frac{n+4}{2}(I,L^\frac{n(n+4)}{3n+4})}\le \delta.
\end{equation}
First, if $I=I_0=[0,a_1]$, we consider the mapping
\begin{equation*}
\mathcal{T}_I:H^2\times \dot{S}^0(I)\cap\dot{S}^2(I)\to H^2\times\dot{S}^0(I)\cap \dot{S}^2(I)
\end{equation*}
defined by
\begin{equation*}\label{Scat-DefOfT}
\mathcal{T}\left(u_0,v\right)=\left(u_0,t\mapsto e^{it\Delta^2}u_0+i\int_0^te^{i(t-s)\Delta^2}\vert v\vert^2v(s)ds\right).
\end{equation*}
The map $\mathcal{T}$ is well defined thanks to the Strichartz estimates \eqref{Not-SE-StrichartzEstimatesWithGainII}. It is clearly analytic, and $u\in C(I,H^2)$ is a solution of \eqref{CubicFOS} if and only if $\mathcal{T}\left(u(0),u\right)=\left(u(0),u\right)$.
An application of Strichartz estimates gives that, if $\delta$ in \eqref{Scat-CondForAnal}
is sufficiently small, then
\begin{equation*}
\Vert D_2\mathcal{T}\left(u(0),u\right)\Vert_{\dot{S}^0\cap\dot{S}^2\to\dot{S}^0\cap\dot{S}^2}<1,
\end{equation*}
where $D_2$ denotes derivation with respect to the second argument.
Consequently, $D_2\left(I-\mathcal{T}\right)\left(u(0),u\right)$ is invertible, and the implicit function theorem ensures that $u_0\mapsto u_{\vert I}$ is analytic. In particular, $u_0\mapsto u(a_1)$, from $H^2$ into $H^2$, is analytic. By finite induction, we get that $u_0\mapsto u(t)$ is analytic.
\end{proof}

Now, we turn to the proof of the scattering assertion of Theorem \ref{MainThm}. The statement
is an easy consequence of Propositions \ref{LastProp} and \ref{ScatProp2} below.

\begin{proposition}\label{LastProp}
Let $5\le n\le 8$. For any $u^+\in H^2$, respectively $u^-\in
H^2$, there exists a unique $u\in C(\mathbb{R},H^2)$,
solution of \eqref{CubicFOS} such that
\begin{equation}\label{GE-ScatteringStatement}
  \Vert u(t)-e^{it\Delta^2}u^\pm\Vert_{H^2}\to 0
 \end{equation}
 as $t\to\pm\infty$. Besides, we have that
 \begin{equation}\label{ScatEnergies}
 \begin{split}
 M(u(0))&=M(u^\pm)\hskip.1cm,\hskip.1cm\text{and}\\
 2E(u(0))&=\Vert u^\pm\Vert_{\dot{H}^2}^2.
 \end{split}
 \end{equation}
This defines two mappings $\mathcal{W}_\pm:u^\pm\mapsto u(0)$ from $H^2$ into $H^2$, and $\mathcal{W}_+$ and $\mathcal{W}_-$
are continuous in $H^2$.
\end{proposition}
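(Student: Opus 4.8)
The plan is to produce $u$ by solving the Duhamel integral equation backward from $t=+\infty$ (forward from $t=-\infty$ for the sign $-$), and then to propagate this local-in-time solution to all of $\mathbb{R}$ using the global well-posedness of \eqref{CubicFOS}. When $5\le n\le 7$ the equation is $\dot H^2$-subcritical, global well-posedness in $H^2$ is classical (the energy controls $\Vert u\Vert_{L^4}$ with room to spare), and nothing more than the near-infinity construction below is needed; so I will concentrate on the critical case $n=8$, where the propagation step rests on Corollary \ref{3S-Cor}. I treat the sign $+$.

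\emph{Construction near $t=+\infty$.} Given $u^+\in H^2$, the Strichartz estimates \eqref{Not-SE-StrichartzEstimatesWithGainII} give $e^{it\Delta^2}u^+\in\dot S^0(\mathbb{R})\cap\dot S^2(\mathbb{R})$; since the $W$- and $Z$-norms carry an $L^q_t$-integral with $q<\infty$, density of $\mathcal S$ in $H^2$ yields
\begin{equation*}
\Vert e^{it\Delta^2}u^+\Vert_{W([T,\infty))}+\Vert e^{it\Delta^2}u^+\Vert_{Z([T,\infty))}\longrightarrow 0\qquad\text{as } T\to+\infty.
\end{equation*}
For $T$ large I would run a contraction argument for the map
\begin{equation*}
\Phi(v)(t)=e^{it\Delta^2}u^+-i\int_t^{+\infty}e^{i(t-s)\Delta^2}\bigl(|v|^2v\bigr)(s)\,ds
\end{equation*}
on the ball of radius $\sim\Vert u^+\Vert_{H^2}$ in $\dot S^0([T,\infty))\cap\dot S^2([T,\infty))$, using \eqref{Not-SE-StrichartzEstimatesWithGainII} together with the product, chain and Leibnitz rules for fractional derivatives (cf.\ Kato \cite{Kat} and Visan \cite{Vis}), i.e.\ the same trilinear Strichartz bounds underlying Proposition \ref{Not-BCC-ClaimOfLE}: the smallness of the $W([T,\infty))$-norm of the linear evolution makes $\Phi$ a contraction. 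This produces a solution $u\in C([T,\infty),H^2)$ of \eqref{CubicFOS} with finite forward Strichartz norm and $\Vert u(t)-e^{it\Delta^2}u^+\Vert_{H^2}\to 0$ as $t\to+\infty$; uniqueness on any $[T',\infty)$ follows by applying the same bounds to the difference of two solutions.

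\emph{Propagation and the identities \eqref{ScatEnergies}.} By Proposition \ref{Not-BCC-ClaimOfLE} this solution has conserved mass and energy. Since $\Vert u(t)-e^{it\Delta^2}u^+\Vert_{H^2}\to0$, one has $M(u(t))=M(u^+)+o(1)$, $\Vert\Delta u(t)\Vert_{L^2}=\Vert u^+\Vert_{\dot H^2}+o(1)$, and $\Vert u(t)\Vert_{L^4}\to 0$, the last because $\Vert e^{it\Delta^2}u^+\Vert_{L^4}\to 0$ by the dispersive decay of $e^{it\Delta^2}$ of Ben-Artzi, Koch and Saut \cite{BenKocSau} (and density in $H^2$); letting $t\to+\infty$ in the two conservation laws gives \eqref{ScatEnergies}. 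In particular $E(u)<\infty$, so Corollary \ref{3S-Cor} applied to the $\dot H^2$-solution with Cauchy data $u(T)$ shows that $u$ extends to a solution globally defined in $C(\mathbb{R},\dot H^2)$ with finite $\dot S^2(\mathbb{R})$-norm, and iterating Proposition \ref{Not-BCC-ClaimOfLE} over a partition of $\mathbb{R}$ into intervals finite in number because $\Vert u\Vert_{\dot S^2(\mathbb{R})}<\infty$ upgrades this to $u\in C(\mathbb{R},H^2)$ with $\Vert u\Vert_{\dot S^0(\mathbb{R})\cap\dot S^2(\mathbb{R})}<\infty$ and conserved mass. I expect this to be the main difficulty: the global extension is exactly where the exclusion of the three scenarii of Sections \ref{Section-3Scenarii}--\ref{Section-Cascade} is used, since without the a priori global spacetime bound one cannot glue the near-infinity solution to a solution defined on all of $\mathbb{R}$.

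\emph{Continuity of $\mathcal W_\pm$.} The contraction of the first step depends on the parameter $u^+$ only through the affine term $e^{it\Delta^2}u^+$, so, for $T$ large, $u^+\mapsto u_{|[T,\infty)}$ is Lipschitz from $H^2$ into $\dot S^0([T,\infty))\cap\dot S^2([T,\infty))$, and in particular $u^+\mapsto u(T)$ is continuous from $H^2$ into $H^2$. Splitting $[0,T]$ into finitely many subintervals on which the global Strichartz bound forces the relevant $W$-norm to be small and iterating the continuous-dependence assertion of Proposition \ref{Not-BCC-ClaimOfLE} along this partition shows that $u(T)\mapsto u(0)$ is continuous; composing, $\mathcal W_\pm\colon u^\pm\mapsto u(0)$ is continuous from $H^2$ into $H^2$. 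The same scheme, with the subcritical local well-posedness theory in place of Proposition \ref{Not-BCC-ClaimOfLE}, disposes of the range $5\le n\le7$.
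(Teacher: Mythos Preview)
Your proof is correct and follows essentially the same route as the paper's: a contraction for the backward Duhamel map on $[T,\infty)$ using the smallness of the linear evolution in Strichartz norms, global extension via the well-posedness theory (Corollary \ref{3S-Cor} when $n=8$), and the identities \eqref{ScatEnergies} from conservation laws plus decay of $\Vert e^{it\Delta^2}u^+\Vert_{L^4}$. The only cosmetic difference is that you invoke dispersive decay and density for $\Vert e^{it\Delta^2}u^+\Vert_{L^4}\to0$, whereas the paper extracts a sequence $t_k\to\infty$ with this property from the finiteness of a Strichartz norm; both are valid.
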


\begin{proof}
By time reversal symmetry, we need only to prove Proposition
\ref{LastProp} for $u^+$. Let
$\omega(t)=e^{it\Delta^2}u^+$. Then by the
Strichartz estimates \eqref{Not-SE-StrichartzEstimatesWithGainII},
$\omega\in \dot{S}^0(\mathbb{R})\cap \dot{S}^2(\mathbb{R})$
and, given $\delta>0$, there exists
$T_\delta$ such that, on $I=[T_\delta,+\infty)$, \eqref{Scat-CondForAnal} holds true with $\omega$ instead of $u$. 
For $u\in \dot{S}^0(I)\cap\dot{S}^2(I)$, we define
\begin{equation}\label{Scat-DefPhi}
\Phi(u)(t)=\omega(t)-i\int_{t}^\infty
e^{i(t-s)\Delta^2}\vert
u(s)\vert^2u(s)ds.
\end{equation}
For $\delta$ sufficiently small, $\Phi$ defines a contraction
mapping on the set
\begin{equation*}
\begin{split}
X_{T_\delta}=&\{u\in \dot{S}^0(I)\cap
\dot{S}^2(I);\Vert
\nabla u\Vert_{L^\frac{n+4}{2}(I,L^\frac{n(n+4)}{3n+4})}\le 2\delta,\\
&\Vert
u\Vert_{\dot{S}^0(I)}+\Vert u\Vert_{\dot{S}^2(I)}\lesssim
\Vert u^+\Vert_{H^2}\}\hskip.1cm,
\end{split}
\end{equation*}
equipped with the
$\dot{S}^0(I)$-norm.
Thus $\Phi$ admits a unique fixed point $u$. We
observe that
\begin{equation*}\label{LastEqt}
u(T_\delta+t)=e^{it\Delta^2}u(T_\delta)+i\int_{T_\delta}^{T_\delta+t}e^{i(t-s)\Delta^2}\vert
u(s)\vert^2u(s)ds
\end{equation*}
in $H^2$. Consequently, $u$ solves \eqref{CubicFOS}
on $I=[T_\delta,+\infty)$. Hence, using the first part of Theorem \ref{MainThm}, $u$ can be extended for all times $t\in\mathbb{R}$. Now,
\eqref{GE-ScatteringStatement} follows from \eqref{Scat-DefPhi}
and the boundedness of $u$ in $\dot{S}^2$ and
$\dot{S}^0$-norms. Uniqueness
follows from the fact that any solution of
\eqref{CubicFOS} has a restriction in $X_T$ for some $T\ge
T_\delta$, and uniqueness of the fixed point of $\Phi$ in such
spaces. The continuity statements are easy adaptations of the proof of local well-posedness, see Pausader \cite{Pau1}. The first equality in \eqref{ScatEnergies} follows from conservation of Mass and convergence in $L^2$. For the second, we remark that since $\omega\in \dot{S}^0(\mathbb{R})$ there exists a sequence of times $t_k\to +\infty$ such that $\Vert \omega(t_k)\Vert_{L^4}\to 0$. Then, using conservation of energy, we compute
 \begin{equation*}
 \begin{split}
 2E(u(0))&=2E(u(t_k))\\
 &=2E(\omega(t_k))+o(1)\\
 &=\Vert \omega(t_k)\Vert_{\dot{H}^2}^2+o(1)=\Vert u^+\Vert_{\dot{H}^2}^2+o(1),
 \end{split}
 \end{equation*}
 and letting $k\to +\infty$ we get that the second equation in \eqref{ScatEnergies} holds true.
This finishes the proof of Proposition \ref{LastProp}.
\end{proof}

\begin{proposition}\label{ScatProp2}
 Let $5\le n\le 8$. Given any solution $u\in C(\mathbb{R},H^2)$ of \eqref{CubicFOS}, there exist $u^\pm\in H^2$ such that
 \eqref{GE-ScatteringStatement} holds true. In particular $\mathcal{W}_\pm$ are homeomorphisms of $H^2$.
\end{proposition}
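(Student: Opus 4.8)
The plan is to obtain scattering, as usual, from a global space-time bound on $u$ together with a Cauchy criterion applied to the Duhamel representation \eqref{DefinitionOfSolution}; by time reversal it is enough to treat $t\to+\infty$. The first step is to prove that $u\in\dot{S}^0(\mathbb{R})\cap\dot{S}^2(\mathbb{R})$. When $n=8$ this is essentially Corollary \ref{3S-Cor}: it gives $\Vert u\Vert_{\dot{S}^2(\mathbb{R})}\le C(E(u))$, hence $\Vert u\Vert_{Z(\mathbb{R})}<\infty$ by \cite[Proposition $2.6.$]{Pau1}; splitting $\mathbb{R}$ into finitely many intervals $I_j=[a_j,a_{j+1}]$ on which $\Vert e^{it\Delta^2}u(a_j)\Vert_{W(I_j)}<\delta$ (possible using the Strichartz estimates \eqref{Not-SE-StrichartzEstimatesWithGainI}, Duhamel's formula and the smallness of the $Z$-norm on each piece) and applying Proposition \ref{Not-BCC-ClaimOfLE} on each $I_j$ yields $\Vert u\Vert_{\dot{S}^0(I_j)}\lesssim\Vert u(a_j)\Vert_{L^2}=\Vert u_0\Vert_{L^2}$ by conservation of mass, so that summing over the finitely many pieces gives $u\in\dot{S}^0(\mathbb{R})$, and similarly $u\in\dot{S}^2(\mathbb{R})$. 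When $5\le n\le 7$ the equation is energy-subcritical, so conservation of mass and energy already give $u\in L^\infty(\mathbb{R},H^2)$; to upgrade this to a global Strichartz bound I would invoke the a priori interaction Morawetz estimate of Proposition \ref{FLIME-InteractionMor1} when $n=7,8$ (and the corresponding classical Morawetz estimate when $n=5,6$, where the positivity exploited in the proof of Proposition \ref{FLIME-InteractionMor1} degenerates), which bounds globally in time a norm of regularity $\dot{H}^{-\frac{n-5}{4}}$ via Lemma \ref{FLIME-SQRTEstimateLemma}, and then run a bootstrap/induction-on-time-intervals argument in the spirit of Sections \ref{Section-IME}--\ref{Section-FLIME} and of Colliander, Keel, Staffilani, Takaoka and Tao \cite{ColKeeStaTakTao}, Ryckman and Visan \cite{RycVis} and Visan \cite{Vis}: one subdivides $\mathbb{R}$ into $O(1)$ intervals on which a suitable scattering norm is small, their number being controlled by the Morawetz bound interpolated against the uniform $\dot{H}^2$-bound, so that the subcritical local theory closes on each piece and the $\dot{S}^2$-norms add up to a global bound.

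Once $u\in\dot{S}^0(\mathbb{R})\cap\dot{S}^2(\mathbb{R})$ is known, I would set $u^+=u_0+i\int_0^{+\infty}e^{-is\Delta^2}\left(\vert u\vert^2u\right)(s)ds$. The Strichartz estimates \eqref{Not-SE-StrichartzEstimatesWithGainII} applied on $[t,+\infty)$, together with the Leibnitz and chain rules for fractional derivatives and H\"older's inequality, give $\Vert u(t)-e^{it\Delta^2}u^+\Vert_{H^2}\lesssim\Vert\vert u\vert^2u\Vert_{\dot{\bar{S}}^2([t,+\infty))+\dot{\bar{S}}^0([t,+\infty))}$; since the corresponding global norm is finite by the first step, the right-hand side tends to $0$ as $t\to+\infty$, which shows both that the defining integral converges in $H^2$ and that \eqref{GE-ScatteringStatement} holds. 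This produces the scattering map $u_0\mapsto u^+$ on all of $H^2$; it is a two-sided inverse of the wave operator $\mathcal{W}_+$ of Proposition \ref{LastProp}, and it is continuous because the continuous dependence of the solution in $\dot{S}^0(\mathbb{R})\cap\dot{S}^2(\mathbb{R})$ on its data --- a consequence of the global bounds and of the stability theory, Proposition \ref{Not-S-StabProp2} for $n=8$ and its subcritical counterpart otherwise --- transfers to continuity of the limit $u^+$. Hence $\mathcal{W}_+$, and likewise $\mathcal{W}_-$, is a homeomorphism of $H^2$, which gives the last assertion of Proposition \ref{ScatProp2} and also the remaining part of the scattering statement of Theorem \ref{MainThm}.

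The genuinely delicate point is the global space-time bound of the first step in the energy-subcritical range $5\le n\le 7$: one must convert an a priori Morawetz estimate, which scales like a norm of very low (indeed negative) regularity, into control of the $\dot{S}^2$-norm, bridging essentially the same $7/4$-derivative scaling gap that made Sections \ref{Section-IME}--\ref{Section-FLIME} intricate; moreover for $n=5,6$ one first needs a Morawetz-type estimate adapted to $\dot{H}^2$-solutions, since the sign condition behind Proposition \ref{FLIME-InteractionMor1} forces $n\ge 7$. The remaining steps --- the Cauchy criterion and the homeomorphism property --- are routine once the global space-time bound is available.
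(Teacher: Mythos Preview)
Your treatment of the case $n=8$ and of the Cauchy-criterion/Duhamel step is essentially the paper's argument. The difference lies in the subcritical range $5\le n\le 7$, where you make the problem much harder than it is. The paper does not touch the interaction Morawetz estimate or any frequency-localized bootstrap here: since the solution is in $C(\mathbb{R},H^2)$, mass is conserved, and the classical (centered) Morawetz estimate for the fourth-order equation from \cite{Pau1}, combined with the decay of the linear propagator and conservation of mass in the Lin--Strauss style, yields directly $\Vert u\Vert_{L^4(\mathbb{R},L^4)}\lesssim_u 1$; one application of Strichartz then gives $u\in\dot{S}^0(\mathbb{R})\cap\dot{S}^2(\mathbb{R})$.

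The ``genuinely delicate point'' you flag --- bridging a $7/4$-derivative scaling gap --- is not present in this setting. That gap was the obstruction in Sections \ref{Section-IME}--\ref{Section-FLIME} precisely because there one works with $\dot{H}^2$-solutions for which no $L^2$-type control is available; the whole frequency-localized machinery exists to compensate for the missing mass conservation. For $H^2$-solutions in the subcritical range you already control both endpoints $L^2$ and $\dot{H}^2$ uniformly in time, so the Morawetz output (at whatever regularity it naturally sits) interpolates trivially against these, and no bootstrap in the spirit of Section \ref{Section-FLIME} is needed. Your route via Proposition \ref{FLIME-InteractionMor1} for $n=7$ would eventually close, but it imports hypotheses ($n\ge 7$ for the sign, $n\ge 6$ for Lemma \ref{FLIME-SQRTEstimateLemma}) and an induction-on-intervals scheme that the simpler classical argument avoids entirely.
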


\begin{proof}

In case $5\le n\le 7$, the equation is subcritical, and standard developments using the decay properties of the linear propagator, conservation of mass and the usual Morawetz estimates, 
give that for any solution $u\in C(\mathbb{R},H^2)$ of \eqref{CubicFOS}, there exists $C>0$ such that
\begin{equation*}
\Vert u\Vert_{L^4(\mathbb{R},L^4)}\le C.
\end{equation*}
On such an assertion we refer to Cazenave \cite{Caz} or Lin and Strauss \cite{LinStr} for the second order case, 
and to Pausader \cite{Pau1} for the classical Morawetz estimates in the case of the fourth-order Schr\"odinger equation. 
Consequently, applying Strichartz estimates, we get that
\begin{equation}\label{Scat-ScatBound}
\Vert u\Vert_{\dot{S}^0(\mathbb{R})}+\Vert u\Vert_{\dot{S}^2(\mathbb{R})}\lesssim_u 1.
\end{equation}
In case $n=8$, as a consequence of Corollary \ref{3S-Cor}, we get that any nonlinear solution $u$ satisfies
\begin{equation*}
\Vert u\Vert_{Z(\mathbb{R})}\lesssim_{E(u)}1.
\end{equation*}
Using the work in Pausader \cite[Proposition $2.6$]{Pau1}, we then get that \eqref{Scat-ScatBound} holds true also when $n=8$.
 Since $e^{it\Delta^2}$ is an
 isometry on $H^2$, \eqref{GE-ScatteringStatement} is
 equivalent to proving that there exists $u^+\in H^2$ such that
 \begin{equation}\label{GE-ProofOfScatteringPropEstimate1}
  \Vert e^{-it\Delta^2}u(t)-u^+\Vert_{H^2}\to 0
 \end{equation}
 as $t\to+\infty$. Now we prove that $e^{-it\Delta^2}u(t)$
 satisfies a Cauchy criterion. We note that
 Duhamel's formula gives that
 \begin{equation}\label{GE-ProofOfScatteringPropEstimate2}
  e^{-it_1\Delta^2}u(t_1)-e^{-it_0\Delta^2}u(t_0)
  =i\int_{t_0}^{t_1}e^{-is\Delta^2}\vert
  u(s)\vert^2u(s)ds.
 \end{equation}
By duality, \eqref{Not-SE-StrichartzEstimatesWithGainII} gives that for any $s\in [0,2],$ and any $h\in\dot{\bar{S}}(\mathbb{R})$, we have that
\begin{equation}\label{Scat-StricScat}
\Vert\int_\mathbb{R}e^{-it\Delta^2}h(t)dt\Vert_{\dot{H}^s}\lesssim \Vert h\Vert_{\dot{\bar{S}}^s(\mathbb{R})}.
\end{equation}
Now, \eqref{Scat-ScatBound} and \eqref{Scat-StricScat} give that the right hand side in $\eqref{GE-ProofOfScatteringPropEstimate2}$ is like $o(1)$
 in $H^2$ as $t_0,t_1\to+\infty$. In particular,
 $e^{-it\Delta^2}u(t)$ satisfies a Cauchy
 criterion, and there exists $u^+\in H^2$ such that \eqref{GE-ProofOfScatteringPropEstimate1} holds true.
 We also get that
 \begin{equation}\label{GE-ScatteringForm}
 u^+=u_0+i\int_0^\infty e^{-is\Delta^2}\vert
 u(s)\vert^2u(s)ds,
 \end{equation}
 and $u^+$ is unique. The continuity statements are easy adaptations of the proof of local well-posedness, see Pausader \cite{Pau1}.
 Now, by uniqueness, we clearly have that $u(0)=\mathcal{W}_+(u^+)$, so that $\mathcal{W}_+$ is an homeomorphism. This ends the proof of Proposition \ref{ScatProp2}.
\end{proof}

\begin{proof}[Proof of the scattering in Theorem \ref{MainThm}]
 Applying Propositions \ref{LastProp} and \ref{ScatProp2}, we see that the scattering operator $S=\mathcal{W}_+\circ\mathcal{W}_-^{-1}$ is an homeomorphism from $H^2$ into $H^2$. 
 Using \eqref{Scat-DefPhi} and \eqref{GE-ScatteringForm}, and adapting slightly the proof of Proposition \ref{Scat-Anal}, we easily see that $S$ is analytic. This ends the proof of 
the scattering part in Theorem \ref{MainThm}.
\end{proof}

\medskip\noindent{\small ACKNOWLEDGEMENT:} The author expresses his deep thanks to Emmanuel Hebey for his
constant support and for stimulating discussions during the preparation of this work.

\end{document}